\newtheorem{thm}{Theorem}[section]
\newtheorem{cor}[thm]{Corollary}
\newtheorem{lema}[thm]{Lemma}
\newtheorem{prop}[thm]{Proposition}
\theoremstyle{definition}
\newtheorem{defn}[thm]{Definition}
\newtheorem{exam}[thm]{Example}
\theoremstyle{remark}
\newtheorem{rem}[thm]{Remark}
\def\glim{\mathop{\text{\normalfont $\Gamma-$lim}}}
\def\supp{\mathop{\text{\normalfont supp}}}
\def\diver{\mathop{\text{\normalfont div}}}
\def\diam{\mathop{\text{\normalfont diam}}}
\numberwithin{equation}{section}
\newcommand{\R}{\mathbb R}
\newcommand{\Sn}{{\mathbb S}^{n-1}}
\newcommand{\N}{\mathbb N}
\newcommand{\g}{\mathfrak g}
\def\C{\mathbf {C}}
\def\J{{\mathcal{J}}}
\def\F{{\mathcal{F}}}
\newcommand{\intr}{\int_{\R^n}}
\newcommand{\ve}{\varepsilon}
\newcommand{\cd}{\rightharpoonup}
\begin{document}
\title{Fractional order Orlicz-Sobolev spaces}

\author[J. Fern\'andez Bonder and A.M. Salort]{Juli\'an Fern\'andez Bonder and Ariel M. Salort}

\address{Departamento de Matem\'atica, FCEyN - Universidad de Buenos Aires and
\hfill\break \indent IMAS - CONICET
\hfill\break \indent Ciudad Universitaria, Pabell\'on I (1428) Av. Cantilo s/n. \hfill\break \indent Buenos Aires, Argentina.}

\email[J. Fern\'andez Bonder]{jfbonder@dm.uba.ar}
\urladdr{http://mate.dm.uba.ar/~jfbonder}

\email[A.M. Salort]{asalort@dm.uba.ar}
\urladdr{http://mate.dm.uba.ar/~asalort}


\subjclass[2010]{46E30, 35R11, 45G05}

\keywords{Fractional order Sobolev spaces, Orlicz-Sobolev spaces, $g-$laplace operator}

\begin{abstract}
In this paper we define the fractional order Orlicz-Sobolev spaces, and prove its convergence to the classical Orlicz-Sobolev spaces when the fractional parameter $s\uparrow 1$ in the spirit of the celebrated result of Bourgain-Brezis-Mironescu. We then deduce some consequences such as $\Gamma-$convergence of the modulars and convergence of solutions for some fractional versions of the $\Delta_g$ operator as the fractional parameter $s\uparrow 1$.
\end{abstract}

\maketitle
\tableofcontents

\section{Introduction}

In recent years there has been an increasing attention on problems involving anomalous diffusion due to some new interesting application in the natural sciences. Just to cite a few examples see the articles \cite{DGLZ, Eringen, Giacomin-Lebowitz, Laskin, Metzler-Klafter, Zhou-Du} for some physical models, \cite{Akgiray-Booth, Levendorski, Schoutens} for applications in finance, \cite{Constantin, Dalibard} for fluid dynamics, \cite{Humphries, Massaccesi-Valdinoci, Reynolds-Rhodes} for some examples in ecology and \cite{Gilboa-Osher} for image processing.

Up to date is almost impossible to give a comprehensive list of references and we refer the interested reader, for instance,  to the surveys \cite{DPV,Mosconi-Squassina,RosOton}.

In most of these applications a fundamental tool to treat these type of problems is the so-called {\em fractional order Sobolev spaces} that for $0<s<1\le p<\infty$ are defined as
$$
W^{s,p}(\R^n) = \left\{u\in L^p(\R^n)\colon \frac{u(x)-u(y)}{|x-y|^{\frac{n}{p}+s}}\in L^p(\R^n\times\R^n)\right\}.
$$

The connection of these spaces with the classical Sobolev spaces in the Hilbert case (i.e. $p=2$) is well known since the 60s  using that we have at hand the Fourier transform and one can prove the alternative characterization
$$
W^{s,2}(\R^n) = H^s(\R^n) = \{u\in L^2(\R^n)\colon (1+|\xi|^{2s}){\mathcal F}[u](\xi)\in L^2(\R^n)\}.
$$
From this characterization is then easy to show that, in a suitable sense, $H^s(\R^n)\to H^1(\R^n)$ when $s\uparrow 1$. See \cite{Stein} and also \cite{DPV}.

The general problem was tackled by J. Bourgain, H. Brezis and P. Mironescu in a series of papers \cite{BBM2, BBM} (see also  Maz'ya-Shaposhnikova \cite{MaSha} for the case $s\downarrow 0$). In particular, in \cite{BBM}, the authors prove that for any $u\in L^p(\R^n)$, it holds that
$$
\lim_{s\uparrow 1} (1-s)\iint_{\R^n\times\R^n} \frac{|u(x)-u(y)|^p}{|x-y|^{n+sp}}\, dxdy = K(n,p)\int_{\R^n} |\nabla u|^p\, dx,
$$
where $K(n,p)$ is a (explicit) constant depending only on $n$ and $p$. The right hand side of the former inequality is understood as $\infty$ if $u\not\in W^{1,p}(\R^n)$. So in that sense $W^{s,p}(\R^n) \to W^{1,p}(\R^n)$ as $s\uparrow 1$.

Let us point out that recently there has been some generalizations of these results for the so-called {\em magnetic spaces}. See \cite{SquassinaVolzone}.

On the other hand, in many contexts it is useful to consider asymptotic behaviors different that power laws, or different behaviors near 0 and near $\infty$. See \cite{Lieberman91}.

In that contexts the mathematical tool commonly used to deal with those problems is to replace the Lebesgue and Sobolev spaces for the Orlicz and Orlicz-Sobolev spaces. That is, given $G\colon \R\to\R$ an Orlicz function (c.f. Section \ref{sec.Orlicz} for precise definitions), we consider the spaces
$$
L^G(\R^n) = \{u\in L^1_{\text{loc}}(\R^n)\colon \int_{\R^n} G(|u|)\, dx <\infty\}
$$
and
$$
W^{1,G}(\R^n) = \{u\in W^{1,1}_{\text{loc}}(\R^n)\colon u, |\nabla u|\in L^G(\R^n)\}.
$$

These spaces have been extensively studied since the 50s and is by now a well understood tool to deal with nonstandard growth problems. See \cite{KrRu61, Lieberman91, SandraNoemi}.

So in this paper we propose what we believe is the {\em natural fractional version} of these spaces, i.e.
$$
W^{s,G}(\R^n) = \left\{u\in L^G(\R^n)\colon \iint_{\R^n\times\R^n} G\left(\frac{|u(x)-u(y)|}{|x-y|^s}\right) \frac{dxdy}{|x-y|^n}<\infty \right\}.
$$
Observe that in the case $G(t)=t^p$, these spaces coincide with the fractional order Sobolev space $W^{s,p}(\R^n)$.

We begin this paper by reviewing some natural properties of the spaces $W^{s,G}(\R^n)$ that are immediately deduced from the general theory of Orlicz spaces and after that we arrive at the main point of the article, i.e. to study the limit of these spaces as $s\uparrow 1$.

We follow the approach of J. Bourgain, H. Brezis and P. Mironescu in \cite{BBM} and show that there exists an Orlicz function $\tilde G$, that is computed explicitly in terms of $G$ and that is equivalent to $G$, such that $W^{s,G}(\R^n)\to W^{1,\tilde G}(\R^n)$ when $s\uparrow 1$ in the same sense that in the classical fractional Sobolev spaces case.

In some parts of the proofs we also benefit from arguments found in the article of A. Ponce \cite{Ponce04}.

We want to remark that in \cite{Ponce04} the author found some sort of nonlocal approximations to Orlicz-Sobolev spaces, but his approximations do not define fractional versions of these spaces nor are generalization of fractional order Sobolev spaces.

Finally, we end this paper with some applications of our results to the existence and stability theory for solutions to some nonlocal problems with nonstandard growth.

\section{General properties for Orlicz functions}\label{sec.Orlicz}

In this section we will define the fractional order Orlicz-Sobolev spaces and study some basic properties.

\subsection{Orlicz functions}

We start by recalling the definition of the well-known Orlicz functions.
\begin{defn}\label{defi.Orlicz}
$G\colon \R_+ \to \R_+$ is called an \emph{Orlicz function} if it has the following properties:
\begin{align}
\tag{$H_1$}\label{H1} &G \text{ is continuous, convex,  increasing and }  G(0)=0.\\ 
\tag{$H_2$}\label{H2} &G \text{ satisfies the $\Delta_2$ condition, that is there exists $\C>2$ such that }\qquad\qquad\\
\notag &G(2x)\leq \C G(x) \quad \text{ for all } x\in \R_+.\\
\tag{$H_3$}\label{H3} &G \text{ is super-linear at zero, that is } \lim_{x\to 0} \frac{G(x)}{x} = 0.
\end{align}
\end{defn}

\begin{exam}\label{ej.orlicz}
Some examples of functions verifying Definition \ref{defi.Orlicz} include the most common appearances in the literature. For instance:
\begin{enumerate}
\item $G(t) = t^p$ with $p>1$.

\item $G(t)=t^p(|\log t| + 1)$ with $p>1$.

\item If $G_1$ and $G_2$ are Orlicz functions, then $G_1\circ G_2$ is also an Orlicz function.

\item If $G_1,\dots,G_m$ are Orlicz functions and $a_1,\dots,a_m\ge 0$, then $G=\sum_{i=1}^m a_i G_i$ is an Orlicz function.

\item If $G_1,\dots,G_m$ are Orlicz functions, then $G=\max\{G_1,\dots,G_m\}$ is an Orlicz function.
\end{enumerate}

See \cite{KrRu61} for a proof of these facts and for more examples of Orlicz functions.
\end{exam}

\begin{rem}
Without loss of generality $G$ can be normalized such that $G(1)=1$.
\end{rem}

It is easy to check that  Orlicz functions fulfill the following basic properties.

\begin{lema}
Let $G:\R_+\to\R_+$ be an Orlicz function. It follows that
\begin{align}
\tag{$P_1$}\label{P1} &\text{\em (Regularity) } G \text{ is  Lipschitz continuous.}\\
\tag{$P_2$}\label{P2} &\text{\em (Integrability near $0$ and  infinity)} \text{ Given } s\in (0,1),\\
\notag & \int_1^\infty  \frac{G(x^{-s})}{x} \,dx \le \frac{\g}{s}, \quad \int_0^1 \frac{G(x^{1-s})}{x}\,dx \leq  \frac{\g}{1-s},\\
\notag & \text{where } \g:= \sup_{x\in (0,1)} x^{-1}G(x).\\
\tag{$P_3$}\label{P3} & G \text{ can be represented in the form}\\
\notag &G(x)=\int_0^x g(s)\,ds,\\
\notag & \text{where $g(s)$ is a non-decreasing right continuous function.}\qquad \qquad\qquad\qquad\\
\tag{$P_4$}\label{P4} &\text{\em (Subaditivity)} \text{ Given }  a,b\in \R_+,\\
\notag & G(a+b)\leq \tfrac{\C}{2} (G(a)+G(b)),\\
\notag & \text{where $\C>0$ is the constant in the $\Delta_2$ condition \eqref{H2}.}\\
\tag{$P_5$}\label{P5} &\text{ For any $0<b<1$ and $a>0$, it holds  $G(ab)\leq b G(a)$.}
\end{align}
\end{lema}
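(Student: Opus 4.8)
The plan is to verify the five properties in the order listed, each following from the axioms \eqref{H1}--\eqref{H3} by a short argument. For \eqref{P1}, the key observation is that $G$ is convex with $G(0)=0$, hence the difference quotient $G(x)/x$ is non-decreasing on $\R_+$; combined with the $\Delta_2$ condition \eqref{H2} this forces the right derivative $g$ of $G$ (which exists since $G$ is convex) to be bounded on bounded sets, and in fact the $\Delta_2$ condition propagates a linear bound $G(x)\le \C G(x/2)\cdot\ldots$ so that $g$ is globally bounded on any interval by a ratio of the form $G(2x)/x\le \C G(x)/x$; boundedness of $g$ on $[0,R]$ gives Lipschitz continuity there, and a standard doubling argument upgrades this to global Lipschitz continuity.

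For \eqref{P3} I would simply invoke that a finite convex function on $\R_+$ is the integral of its right derivative, which is non-decreasing and right continuous; since $G(0)=0$ this gives $G(x)=\int_0^x g(s)\,ds$ directly. Property \eqref{P5} is the single cleanest one: for $0<b<1$ and $a>0$, convexity of $G$ together with $G(0)=0$ gives $G(ba)=G(ba+(1-b)0)\le bG(a)+(1-b)G(0)=bG(a)$. Property \eqref{P4} then follows from \eqref{P5} together with \eqref{H2}: write $G(a+b)=G\big(\tfrac12(2a)+\tfrac12(2b)\big)\le \tfrac12 G(2a)+\tfrac12 G(2b)\le \tfrac{\C}{2}(G(a)+G(b))$, using convexity for the first inequality and the $\Delta_2$ condition for the second.

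For \eqref{P2}, the estimate near infinity is obtained from \eqref{P5}: for $x>1$ we have $x^{-s}\in(0,1)$, so $G(x^{-s})\le x^{-s}G(1)\le \g x^{-s}$ is too crude; instead one uses $G(x^{-s})=G(x^{-s}\cdot 1)$ and the fact that for $t\in(0,1)$ one has $G(t)\le \g t$ by the very definition of $\g=\sup_{x\in(0,1)}x^{-1}G(x)$, so $G(x^{-s})\le \g x^{-s}$ for $x>1$, and then $\int_1^\infty \g x^{-s}\cdot x^{-1}\,dx=\g\int_1^\infty x^{-1-s}\,dx=\g/s$. Near zero, for $x\in(0,1)$ one has $x^{1-s}\in(0,1)$ as well, so again $G(x^{1-s})\le \g x^{1-s}$, giving $\int_0^1 G(x^{1-s})x^{-1}\,dx\le \g\int_0^1 x^{-s}\,dx=\g/(1-s)$. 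The main point to be careful about is that $\g$ is finite, which is exactly the content of \eqref{H3} combined with monotonicity of $x^{-1}G(x)$: superlinearity at zero gives $\lim_{x\to0}x^{-1}G(x)=0$, and since $x^{-1}G(x)$ is non-decreasing, its supremum over $(0,1)$ is the limit as $x\to1^-$, namely $G(1)<\infty$; thus $\g=G(1)$ (or $\le G(1)$, and equals $1$ under the normalization $G(1)=1$). I expect the only genuinely delicate step to be the global (as opposed to local) Lipschitz claim in \eqref{P1}, where the $\Delta_2$ condition must be used to control the growth of $g$ at infinity — everything else is a one-line consequence of convexity and the definitions.
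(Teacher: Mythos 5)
Your treatment of \eqref{P2}--\eqref{P5} is correct and matches the paper's argument: \eqref{P5} and the finiteness of $\g$ come from convexity together with $G(0)=0$; \eqref{P4} is the same two-line computation $G(a+b)=G(\tfrac12(2a)+\tfrac12(2b))\le\tfrac12(G(2a)+G(2b))\le\tfrac{\C}{2}(G(a)+G(b))$; and \eqref{P2} is the direct integration you carry out (the remark that your ``too crude'' first bound and the one you actually use are the same estimate, since $\g=G(1)$, is harmless). For \eqref{P3} the paper simply cites Krasnosel'skii--Rutickii, while you invoke the equivalent standard fact that a finite convex function vanishing at $0$ is the integral of its non-decreasing right derivative; both are fine.

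The one step that would fail is the final claim in your argument for \eqref{P1}: no ``standard doubling argument'' can upgrade Lipschitz continuity on bounded intervals to global Lipschitz continuity, because the global statement is false for typical Orlicz functions. Take $G(t)=t^p$ with $p>1$: it satisfies \eqref{H1}--\eqref{H3} and the $\Delta_2$ condition with $\C=2^p$, yet $G'(t)=pt^{p-1}$ is unbounded, so $G$ is not globally Lipschitz on $\R_+$. The $\Delta_2$ condition controls the ratio $G(2x)/G(x)$, not the increment $G(2x)-G(x)$, so it gives no bound on the derivative at infinity. What is true, what the paper actually proves (\eqref{P1} is stated as a ``direct consequence of convexity''), and what is used later (the Lipschitz constant of $G$ on the interval $[0,\|\nabla u\|_\infty]$ in the pointwise limit lemma) is local Lipschitz continuity: the right derivative $g$ of the convex function $G$ is non-decreasing, hence $|G(x)-G(y)|\le g(R)|x-y|$ for $x,y\in[0,R]$. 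The first half of your argument for \eqref{P1} already yields exactly this; simply drop the attempted globalization.
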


\begin{proof}
Properties \eqref{P1} and \eqref{P5} are direct consequences of the convexity of $G$. 

Now, \eqref{P2} is immediate once one observe that $\g<\infty$ by \eqref{H3}.

Property \eqref{P3} is proved in \cite[Theorem 1.1]{KrRu61}. Finally, \eqref{P4} follows from the convexity of $G$ together with the $\Delta_2$ condition:
$$
	G(a+b)=G(\tfrac12(2a+2b)) \leq \tfrac12(G(2a)+G(2b)) \leq \tfrac{\C}{2}(G(a)+G(b)).
$$
The proof is complete.
\end{proof}

It is shown in \cite[Theorem 4.1]{KrRu61} that the $\Delta_2$ condition \eqref{H2} is equivalent to
\begin{equation} \label{H4.1}
\frac{G'(a)}{G(a)}\leq \frac{p}{a}, \qquad \forall a>0,
\end{equation}
for some $p>1$.

The following lemma will be useful in the sequel.
\begin{lema} \label{lema.lieber}
Let $G$ be an Orlicz function. Then, for every $a\ge 1$ and $b\ge 0$ it holds
\begin{equation}\label{H4}
G(ab)\leq a^p G(b),
\end{equation}
where $p>1$ is given by \eqref{H4.1}.
\end{lema}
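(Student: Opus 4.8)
The plan is to derive the multiplicative inequality \eqref{H4} by integrating the differential inequality \eqref{H4.1}, which the preceding discussion records as equivalent to the $\Delta_2$ condition. Fix $b>0$ (the case $b=0$ being trivial since $G(0)=0$) and consider, for $t\ge 1$, the function $\varphi(t):=G(tb)$. Using \eqref{P3} we may write $\varphi'(t)=g(tb)\,b$ almost everywhere, where $g$ is the right-continuous non-decreasing density of $G$; identifying $g$ with $G'$ wherever the latter exists, \eqref{H4.1} applied at the point $a=tb$ gives $\varphi'(t)/\varphi(t) = bG'(tb)/G(tb) \le b\cdot p/(tb) = p/t$ for a.e.\ $t\ge 1$.

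Next I would integrate this bound. Since $G$ is increasing with $G(0)=0$, for $b>0$ we have $\varphi(t)=G(tb)>0$ on $[1,\infty)$, so $(\log\varphi)'(t) = \varphi'(t)/\varphi(t) \le p/t$ for a.e.\ $t\ge 1$. Because $\varphi$ is locally Lipschitz (by \eqref{P1}) and positive, $\log\varphi$ is absolutely continuous on $[1,a]$ for any $a\ge 1$, and integrating from $1$ to $a$ yields
\begin{equation*}
\log\varphi(a) - \log\varphi(1) \le p\log a,
\end{equation*}
that is, $\log G(ab) - \log G(b) \le \log a^p$, hence $G(ab)\le a^p G(b)$. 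This is exactly \eqref{H4}.

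The only genuinely delicate point is the justification of the chain rule / fundamental theorem of calculus step: \eqref{H4.1} is stated pointwise in terms of $G'$, but $G$ need only be convex (hence differentiable off a countable set, with $G'=g$ right-continuous and non-decreasing), so I must make sure the integration is legitimate for the a.e.-defined derivative. This is handled by noting that a locally Lipschitz function is absolutely continuous, so $\log G(ab)-\log G(b) = \int_1^a \frac{d}{dt}\log G(tb)\,dt = \int_1^a \frac{bg(tb)}{G(tb)}\,dt$, and the integrand is bounded by $p/t$ for a.e.\ $t$ by \eqref{H4.1}; alternatively one can interpret \eqref{H4.1} (as \cite{KrRu61} does) as a statement about the right derivative and argue with monotone difference quotients. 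Either way, no new idea beyond \eqref{H4.1} is needed — the lemma is essentially a restatement of the $\Delta_2$ condition in integrated form, generalizing the familiar fact that $t^p$ satisfies $(ab)^p = a^p b^p$.
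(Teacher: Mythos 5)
Your proof is correct and follows essentially the same route as the paper's: both amount to integrating the differential inequality \eqref{H4.1} from $b$ to $ab$, the paper phrasing this as the monotonicity of $h(t)=t^{-p}G(t)$ (i.e.\ $h'\le 0$), which is exactly your observation that $t\mapsto\log G(tb)-p\log t$ is non-increasing. Your extra care about the a.e.\ sense of $G'$ and the absolute continuity of $\log\varphi$ addresses a point the paper glosses over, but introduces no new idea.
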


\begin{proof}
Let us define the function $h(t)=t^{-p}G(t)$ and observe that \eqref{H4.1} implies that
\begin{align*}
h'(t)&= t^{-p} G'(t) -p t^{-p-1} G(t)\\
&\leq t^{-p-1} ( t G'(t) -p G(t)) \leq 0.
\end{align*}
Hence, since $a\geq 1$, the lemma follows from the inequality $h(ab)\leq h(b)$.
\end{proof}

Using Lemma \ref{lema.lieber}, one obtains the following replacement for the triangle inequality for Orlicz functions.
\begin{lema}  \label{lema.triang}
Let $G$ be a Orlicz function. Then for every $\delta>0$, there exists $C_\delta>0$ such that
$$
	G(a+b) \leq C_\delta G(a)+(1+\delta)^p G(b), \qquad a,b\geq 0.
$$
\end{lema}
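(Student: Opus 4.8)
The plan is to split $a+b$ into a "large part" and a "small part" according to whether $b$ dominates $a$ or not, and then apply Lemma \ref{lema.lieber} on each piece. Concretely, fix $\delta>0$. If $a \le \delta b$, then $a+b \le (1+\delta) b$, so since $1+\delta \ge 1$ we may use \eqref{H4} with the roles $a \rightsquigarrow 1+\delta$ and $b \rightsquigarrow b$ to get $G(a+b) \le G((1+\delta)b) \le (1+\delta)^p G(b)$, which is even better than what is claimed (the $G(a)$ term is not needed). If instead $a > \delta b$, then $a+b < (1+\tfrac{1}{\delta}) a$, and again $1+\tfrac1\delta \ge 1$, so \eqref{H4} gives $G(a+b) \le G((1+\tfrac1\delta)a) \le (1+\tfrac1\delta)^p G(a)$. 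Setting $C_\delta := (1+\tfrac1\delta)^p$ then yields $G(a+b) \le C_\delta G(a)$ in this case, and in particular $G(a+b) \le C_\delta G(a) + (1+\delta)^p G(b)$ since $G(b)\ge 0$.

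Combining the two cases, in either situation the bound $G(a+b) \le C_\delta G(a) + (1+\delta)^p G(b)$ holds with $C_\delta = (1+1/\delta)^p$. One should also dispatch the degenerate cases $a=0$ or $b=0$: if $b=0$ then the first case applies trivially ($a \le \delta b$ forces $a=0$, but more simply $G(a) \le C_\delta G(a)$ whenever $C_\delta \ge 1$, which holds); if $a=0$ the second case is vacuous and $G(b) \le (1+\delta)^p G(b)$ since $1+\delta>1$ and $p>1$. So the inequality is valid for all $a,b \ge 0$.

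There is essentially no obstacle here — the lemma is a soft consequence of the homogeneity-type estimate \eqref{H4}, and the only "idea" is the dichotomy on the ratio $a/b$, which is the standard trick for extracting a near-optimal constant in front of one of the two terms. The mild point to be careful about is that one cannot symmetrically apply \eqref{H4} when the multiplier is less than $1$, which is exactly why the dichotomy (rather than a single estimate $a+b \le (1+\delta)\max\{a, b/\delta\}\cdot$something) is used, and why the constant in front of $G(a)$ is allowed to blow up as $\delta \downarrow 0$ while the constant in front of $G(b)$ stays close to $1$.
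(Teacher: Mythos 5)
Your proof is correct and follows essentially the same route as the paper: a dichotomy on whether $b$ exceeds $\delta a$, followed by the homogeneity estimate \eqref{H4}. The only (harmless) difference is that the paper handles the large-constant case via the $\Delta_2$ condition (yielding $C_\delta=\C^{\kappa}$ with $1+\tfrac1\delta\le 2^\kappa$) while you apply \eqref{H4} in both cases, obtaining $C_\delta=(1+\tfrac1\delta)^p$; incidentally, your version places the constants exactly as in the statement, whereas the paper's two cases deliver them with the roles of $a$ and $b$ interchanged.
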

\begin{proof}
Let $a,b\geq 0$ and $\delta>0$. If $b>\delta a$, from the monotonicity of $G$ and \eqref{H2} we get
$$
G(a+b)\leq G(b(1+\tfrac{1}{\delta})) \leq G(b2^\kappa) \leq \C^{\kappa} G(b),
$$
where $\kappa = \kappa(\delta)\in\N$ is such that $1+\tfrac{1}{\delta}\leq 2^{\kappa}$.

Now, if $b\le \delta a$ we get from \eqref{H4},
$$
G(a+b)=G(a(1+\delta)) \leq (1+\delta)^p G(a).
$$
The proof is finished.
\end{proof}

Next, we prove a technical lemma that can be seen as the counterpart of \eqref{P5}. It will be useful in proving some properties on Orlicz spaces.

\begin{lema} \label{lema.iterado}
Let $G$ be an Orlicz function. Then, there exists $q>1$ such that 
\begin{equation} \label{ec.escala}
t^{2q} G(a)\leq G(at),
\end{equation}
for every $a>0$ and $0\le t\le 1$.
\end{lema}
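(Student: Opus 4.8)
The plan is to deduce \eqref{ec.escala} directly from the scaling estimate \eqref{H4} of Lemma~\ref{lema.lieber}, used ``in reverse''. Note that \eqref{H4} is an \emph{upper} bound, $G(AB)\le A^{p}G(B)$ for $A\ge 1$ and $B\ge 0$; the key observation is that choosing the large factor to be $A=1/t$ (legitimate precisely because $0<t\le 1$) turns it into the \emph{lower} bound for $G(at)$ that we are after. This is essentially the only step with any content; everything else is bookkeeping.

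Concretely, I would argue as follows. If $t=0$ both sides of \eqref{ec.escala} vanish by \eqref{H1}, so there is nothing to prove. For $0<t\le 1$ and $a>0$, apply \eqref{H4} with $A:=1/t\ge 1$ and $B:=at\ge 0$, so that $AB=a$:
\[
G(a)=G(AB)\le A^{p}G(B)=t^{-p}G(at),
\]
where $p>1$ is the exponent furnished by \eqref{H4.1}. Multiplying through by $t^{p}>0$ gives $t^{p}G(a)\le G(at)$.

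It then remains only to absorb the exponent $p$ into a term of the form $t^{2q}$ with $q>1$. Since $0<t\le 1$ we have $t^{2q}\le t^{p}$ as soon as $2q\ge p$, so it suffices to fix any $q>1$ with $2q\ge p$ (for instance $q=\max\{p/2,2\}$, or simply $q=p$); then $t^{2q}G(a)\le t^{p}G(a)\le G(at)$, which is \eqref{ec.escala}, with $q$ depending only on $G$. I do not expect any genuine obstacle: the one mildly non-obvious point is recognizing that the ``wrong-direction'' estimate \eqref{H4} yields exactly the desired lower bound once the dilation factor is taken to be $1/t$.
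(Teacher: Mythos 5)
Your proof is correct, and it takes a genuinely different route from the paper's. You derive the lower bound by applying the scaling estimate \eqref{H4} of Lemma~\ref{lema.lieber} "in reverse" with dilation factor $A=1/t\ge 1$ and $B=at$, obtaining $t^{p}G(a)\le G(at)$ and then relaxing the exponent to any $q>1$ with $2q\ge p$ (e.g.\ $q=p$); this is clean, and there is no circularity since Lemma~\ref{lema.lieber} precedes Lemma~\ref{lema.iterado} in the paper. The paper instead argues directly from the raw $\Delta_2$ inequality: it places $t$ in a dyadic interval $2^{-(k+1)}\le t<2^{-k}$, iterates $G(a)\le \C^{k+1}G(a2^{-(k+1)})$, and compares $\C^{k+1}$ with $t^{-2q}$ for the explicit choice $q=\log\C/\log 2>1$. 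The trade-off is that your argument is shorter but leans on the differential characterization \eqref{H4.1} of the $\Delta_2$ condition (imported from Krasnosel'ski\u{\i}--Ruticki\u{\i} and already used to prove Lemma~\ref{lema.lieber}), whereas the paper's dyadic argument is self-contained, using only the doubling inequality itself and the monotonicity of $G$, and produces a $q$ expressed directly in terms of the doubling constant $\C$. Since none of the later applications (Remark~\ref{remark.cota.inf}, Proposition~\ref{prop.equiv}, the Poincar\'e corollary) depend on the particular value of $q$, either choice is acceptable.
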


\begin{proof}
Given $t\in (0,1)$, there exists $k\in\N_0$ such that $2^{-(k+1)}\le t < 2^{-k}$. So
$$
\C^{-(k+1)}\le t^q < \C^{-k}, 
$$
where $q=\tfrac{\log \C}{\log 2}>1$ since the doubling constant $\C$ is greater than $2$. From this inequality it readily follows that
$$
\C^{k+1}\le t^{-\frac{k+1}{k}q}\le t^{-2q},
$$
since $0<t<1$.

Therefore, using \eqref{H2}, we obtain
$$
G(a) \leq \C^{k+1} G(a 2^{-(k+1)}) = t^{-2q} G(a t)
$$
and the lemma is proved.
\end{proof}

\begin{rem} \label{remark.cota.inf}
Observe that, from the convexity of $G$ and since $G(0)=0$, $G(1)=1$, it follows that $G(a)\ge a$ for any $a\ge 1$.

Therefore we get a lower bound for $G$ of the form
$$
\min\{a,a^{2q}\} \leq G(a),
$$
for any $a\ge 0$.
\end{rem}

To end this subsection, we recall some tools from convex analysis. In fact, given $G$ an Orlicz function, we define the complementary function $G^*$ as
\begin{equation}\label{G*}
G^*(a) = \sup\{at-G(t)\colon t>0\}.
\end{equation}
From \eqref{G*} is immediate that the following Young-type inequality holds
\begin{equation}\label{young}
at\le G(t) + G^*(a)\quad \text{for every } a,t\ge 0.
\end{equation}

The following property will be useful in the sequel.
\begin{lema}\label{G*g}
Let $G$ be an Orlicz function and $G^*$ its complementary function. Then
$$
G^*(g(t))\le (p-1)G(t),
$$
where $p>1$ is given by \eqref{H4.1} and $g=G'$.
\end{lema}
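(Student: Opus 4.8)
The plan is to compute $G^*(g(t))$ exactly and then to invoke \eqref{H4.1}. The structural input I will rely on is the integral representation \eqref{P3}, $G(t)=\int_0^t g(s)\,ds$ with $g$ non-decreasing and right continuous, which lets $g$ act as the derivative of $G$ even at points where $G$ is not differentiable.

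The first and main step is to show that for every $t>0$
\[
G^*(g(t)) = t\,g(t) - G(t).
\]
To see this, fix $t>0$ and set $\phi(\tau):=g(t)\tau-G(\tau)$, so that $G^*(g(t))=\sup_{\tau>0}\phi(\tau)$ by \eqref{G*}. Using \eqref{P3}, for $0<\tau_1<\tau_2$ one has $\phi(\tau_2)-\phi(\tau_1)=\int_{\tau_1}^{\tau_2}\big(g(t)-g(s)\big)\,ds$; since $g$ is non-decreasing, this quantity is $\ge 0$ when $\tau_2\le t$ and $\le 0$ when $\tau_1\ge t$. Hence $\phi$ is non-decreasing on $(0,t]$ and non-increasing on $[t,\infty)$, so its supremum is attained at $\tau=t$, which is the claimed identity. (Equivalently, this is precisely the equality case of the Young inequality \eqref{young}, expressing that $g(t)$ lies in the subdifferential of $G$ at $t$.)

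Once this identity is available, the conclusion is immediate: condition \eqref{H4.1}, rewritten, says exactly that $a\,g(a)\le p\,G(a)$ for all $a>0$ (with $g=G'$; alternatively one can obtain this inequality by letting the dilation factor tend to $1$ in \eqref{H4} together with the monotonicity of $g$). Therefore
\[
G^*(g(t)) = t\,g(t) - G(t) \le p\,G(t) - G(t) = (p-1)\,G(t),
\]
which is the assertion. I expect the only delicate point to be the computation of $G^*(g(t))$ in the first step — i.e.\ checking that the supremum in \eqref{G*} is attained at $\tau=t$ — and even that becomes routine after invoking \eqref{P3} and the monotonicity of $g$; the second step is then a one-line substitution.
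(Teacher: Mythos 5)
Your proof is correct and follows essentially the same route as the paper's: establish the equality case $G^*(g(t))=t\,g(t)-G(t)$ of Young's inequality and then substitute \eqref{H4.1}. The only difference is that you spell out, via the integral representation \eqref{P3} and the monotonicity of $g$, the step the paper dismisses as ``immediate,'' namely that $\tau\mapsto g(t)\tau-G(\tau)$ is maximized at $\tau=t$.
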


\begin{proof}
Let $h(a):=g(t)a-G(a)$. Then, is immediate to see that $h(a)\le h(t)$ for every $a>0$. This is equivalent to say that
\begin{equation}\label{caso.igualdad}
G^*(g(t)) = g(t)t-G(t).
\end{equation}
Combining this identity with \eqref{H4.1} we obtain the result.
\end{proof}

\subsection{Fractional Orlicz--Sobolev spaces}
Given an Orlicz function $G$ and a fractional parameter $0<s\le 1$,  we consider the spaces $L^G(\R^n)$ and $W^{s,G}(\R^n)$ defined as 
\begin{align*}
&L^G(\R^n) :=\left\{ u\colon \R^n \to \R \text{ measurable, such that }  \Phi_{G}(u) < \infty \right\},\\
&W^{s,G}(\R^n):=\left\{ u\in L^G(\R^n) \text{ such that } \Phi_{s,G}(u)<\infty \right\},
\end{align*}
where the modulars $\Phi_G$ and $\Phi_{s,G}$ are defined as
\begin{align*}
\Phi_{G}(u)&=\int_{\R^n} G(|u(x)|)\,dx,\\
\Phi_{s,G}(u)&=
\begin{cases}
\displaystyle{  \iint_{\R^n\times\R^n} G\left( \frac{|u(x)-u(y)|}{|x-y|^s} \right) \frac{ dx\,dy}{|x-y|^n}} &\qquad  \text{ if } 0<s<1\\
\displaystyle{   \int_{\R^n} G(|\nabla u(x)|)\,dx }&\qquad  \text{ if } s=1.
\end{cases}
\end{align*}

These spaces are endowed with the so-called Luxemburg norms that are defined as
$$
\|u\|_G =  \|u\|_{L^G(\R^n)} := \inf\left\{\lambda>0\colon \Phi_G\left(\frac{u}{\lambda}\right)\le 1\right\}
$$
and
$$
\|u\|_{s,G} =  \|u\|_{W^{s,G}(\R^n)} := \|u\|_G + [u]_{s,G},
$$
where
$$
[u]_{s,G} :=\inf\left\{\lambda>0\colon \Phi_{s,G}\left(\frac{u}{\lambda}\right)\le 1\right\}.
$$
For $0<s<1$, the term $[\, \cdot\,]_{s,G}$ will be called the {\em $(s,G)-$Gagliardo seminorm}.

We begin by recalling some well known properties of the spaces $L^G(\R^n)$ and $W^{1,G}(\R^n)$. For a comprehensive study of these spaces, we refer to the monograph \cite{Adams} where more general Orlicz functions are considered.

\begin{prop}[\cite{Adams}, Chapter 8] \label{propiedades}
Let $G$ be an Orlicz function according to Definition \ref{defi.Orlicz}.

Then the spaces $L^G(\R^n)$ and $W^{1,G}(\R^n)$ are reflexive, separable Banach spaces. Moreover, the dual space of $L^G(\R^n)$ can be identified with $L^{G^*}(\R^n)$. Finally, $C_c^\infty(\R^n)$ is dense in $L^{G}(\R^n)$ and in $W^{1,G}(\R^n)$.
\end{prop}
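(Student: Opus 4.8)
The plan is to read off the whole statement from the classical theory of Orlicz and Orlicz--Sobolev spaces over a set of infinite measure --- for instance \cite[Chapter 8]{Adams} --- so that the actual content is to match the hypotheses of Definition \ref{defi.Orlicz} with the assumptions needed there and to record which hypothesis is responsible for each conclusion. First I would settle the Banach space structure, which needs no $\Delta_2$: the Luxemburg functional $\|\cdot\|_G$ is positively homogeneous by inspection, and the triangle inequality $\|u+v\|_G\le\|u\|_G+\|v\|_G$ is exactly the convexity of $G$ (if $\Phi_G(u/\lambda)\le1$ and $\Phi_G(v/\mu)\le1$ then $\Phi_G((u+v)/(\lambda+\mu))\le\tfrac{\lambda}{\lambda+\mu}\Phi_G(u/\lambda)+\tfrac{\mu}{\lambda+\mu}\Phi_G(v/\mu)\le1$); completeness is the standard modular-space argument, extracting from a norm-Cauchy sequence an a.e.\ convergent subsequence and controlling its modular by Fatou. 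For $W^{1,G}(\R^n)$ it then suffices to note that $u\mapsto(u,\nabla u)$ is an isometry of $W^{1,G}(\R^n)$ onto a \emph{closed} subspace of the product $(L^G(\R^n))^{n+1}$, so that reflexivity, separability and completeness transfer from $L^G(\R^n)$.

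Second, I would isolate the role of the $\Delta_2$ condition \eqref{H2} (equivalently \eqref{H4.1}): it is precisely what makes the Luxemburg norm \emph{absolutely continuous}, so that for sequences, convergence in modular and convergence in norm agree. From this, separability of $L^G(\R^n)$ follows because the set of simple functions taking rational values on axis-parallel cubes with rational vertices is dense, and density of $C_c^\infty(\R^n)$ follows by the usual three-step scheme: truncate $u$ at height $k$ (controlling the modular of the error via \eqref{P4}), multiply by a cutoff supported on a large ball, and mollify, using \eqref{H2} at each step to upgrade modular convergence to norm convergence. For $W^{1,G}(\R^n)$ the same truncation/mollification works together with a Meyers--Serrin-type diagonal argument so that the gradients also converge.

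Third, for the duality $(L^G(\R^n))^{*}\cong L^{G^{*}}(\R^n)$ I would start from the Young inequality \eqref{young}, which gives the generalized H\"older inequality $\int_{\R^n}|uv|\,dx\le2\|u\|_{G}\|v\|_{G^{*}}$ and hence a continuous injection $L^{G^{*}}(\R^n)\hookrightarrow(L^G(\R^n))^{*}$; surjectivity then comes from representing a bounded functional by a density (a Radon--Nikodym-type argument on bounded sets), where \eqref{H2} is what guarantees that this density actually belongs to $L^{G^{*}}(\R^n)$ rather than to a larger space.

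The step I expect to be the main obstacle is reflexivity. In the Orlicz setting $L^G(\R^n)$ is reflexive if and only if \emph{both} $G$ and its complementary function $G^{*}$ satisfy the $\Delta_2$ condition --- equivalently $G\in\Delta_2\cap\nabla_2$ --- whereas Definition \ref{defi.Orlicz} only lists $\Delta_2$ for $G$. Concretely what is needed is a lower companion to \eqref{H4.1}, i.e.\ a constant $c>1$ with $c\,G(a)\le a\,G'(a)$ for all $a>0$; note that Lemma \ref{lema.iterado} and Remark \ref{remark.cota.inf} already supply the $\Delta_2$-type scaling $t^{2q}G(a)\le G(at)$ for $0\le t\le1$, and the missing ingredient is exactly the reverse scaling, which is the assertion $G^{*}\in\Delta_2$ in disguise (and it also subsumes super-linearity of $G$ at infinity, which makes $G^{*}$ finite-valued). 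So the crux is to establish $G^{*}\in\Delta_2$ under the standing hypotheses, or to observe that it is implicit in them; once that is in hand, reflexivity of $L^G(\R^n)$, and then of $W^{1,G}(\R^n)$ via the closed-subspace remark above, follows from the classical theory.
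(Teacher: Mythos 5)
The paper itself offers no argument for this proposition: it is quoted verbatim from \cite{Adams}, Chapter 8, so the only meaningful comparison is with the classical theory you are reconstructing. Your outline matches that theory point by point, and correctly attributes each conclusion to the right hypothesis: the Luxemburg norm's triangle inequality from convexity of $G$, completeness by Fatou, transfer of all functional-analytic properties to $W^{1,G}(\R^n)$ through the isometric embedding $u\mapsto(u,\nabla u)$ onto a closed subspace of $(L^G(\R^n))^{n+1}$, separability and density of $C_c^\infty$ from the $\Delta_2$ condition \eqref{H2} (which makes modular and norm convergence equivalent), and the identification of the dual via Young's inequality \eqref{young} together with a Radon--Nikodym argument. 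All of this is sound.

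Your diagnosis of reflexivity as the obstruction is also correct, and the optimistic branch of your last step (``observe that $G^*\in\Delta_2$ is implicit in the hypotheses'') is not available: $G^*\in\Delta_2$ does \emph{not} follow from \eqref{H1}--\eqref{H3}. Take $G(t)=t^2$ for $0\le t\le1$ and $G(t)=2t-1$ for $t>1$: this is continuous, convex, increasing, vanishes at $0$, satisfies \eqref{H3} and the doubling condition \eqref{H2}, yet $G(t)/t\to2$ as $t\to\infty$, so $G^*(a)=+\infty$ for $a>2$ and in particular $G^*\notin\Delta_2$; by the standard criterion ($L^G(\mu)$ over a non-atomic infinite measure is reflexive iff both $G$ and $G^*$ satisfy $\Delta_2$ globally) the corresponding $L^G(\R^n)$ is not reflexive. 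What is missing is the lower companion to \eqref{H4.1}, i.e.\ constants $1<p^-\le p^+<\infty$ with $p^-\,G(t)\le t\,g(t)\le p^+\,G(t)$; this Lieberman-type condition is a standing assumption in most of the Orlicz literature and has to be added to Definition \ref{defi.Orlicz} (or at least to the hypotheses of this proposition) before reflexivity --- and, for that matter, the finiteness of $G^*$ used later in the duality statements and in the definition of $W^{-s,G^*}$ --- can be asserted. So the gap you flagged is real; it lies in the statement's hypotheses rather than in your argument, and once the extra hypothesis is granted your plan closes.
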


It is fairly straightforward to see that the same functional properties hold for the fractional spaces $W^{s,G}(\R^n)$. We state the result for further references and make a sketch of the proof for completeness.
\begin{prop}\label{prop.WsG}
Let $G$ be an Orlicz function according to Definition \ref{defi.Orlicz} and let $0<s<1$ be a fractional parameter. 

Then $W^{s,G}(\R^n)$ is a reflexive and separable Banach space.  Moreover, $C^\infty_c(\R^n)$ is dense in $W^{s,G}(\R^n)$.
\end{prop}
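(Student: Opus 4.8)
The plan is to transplant the standard arguments for $W^{s,p}(\R^n)$ (as in \cite{DPV}) to the Orlicz setting, with the $\Delta_2$ condition doing the work that the power growth did in the classical case. First I would address \emph{completeness}. Given a Cauchy sequence $(u_k)$ in $W^{s,G}(\R^n)$, it is Cauchy in $L^G(\R^n)$, hence converges to some $u\in L^G(\R^n)$ by Proposition \ref{propiedades}; passing to a subsequence we may assume $u_k(x)\to u(x)$ a.e., so that the difference quotients $\frac{u_k(x)-u_k(y)}{|x-y|^s}$ converge a.e.\ on $\R^n\times\R^n$ to $\frac{u(x)-u(y)}{|x-y|^s}$. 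Since $(u_k)$ is Cauchy in the Gagliardo seminorm, it is bounded there, and an application of Fatou's lemma to the measure $\frac{dx\,dy}{|x-y|^n}$ shows $\Phi_{s,G}(u)<\infty$, i.e.\ $u\in W^{s,G}(\R^n)$. To get norm convergence one fixes $\ve>0$, uses the Cauchy property to find $N$ with $[u_k-u_j]_{s,G}\le\ve$ for $k,j\ge N$, which by definition of the Luxemburg seminorm gives $\Phi_{s,G}\big(\frac{u_k-u_j}{\ve}\big)\le 1$; then Fatou in $j$ yields $\Phi_{s,G}\big(\frac{u_k-u}{\ve}\big)\le 1$, hence $[u_k-u]_{s,G}\le\ve$. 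The triangle inequality for the Luxemburg (semi)norm — which holds because $\Phi_{s,G}$ is convex in $u$, being the composition of the convex $G$ with a linear map, integrated — is what makes $\|\cdot\|_{s,G}$ an honest norm.

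Next, \emph{separability and reflexivity}. The clean way is to exhibit an isometric embedding $T\colon W^{s,G}(\R^n)\hookrightarrow L^G(\R^n)\times L^G(\R^n\times\R^n,d\mu)$, where $d\mu=\frac{dx\,dy}{|x-y|^n}$, by $Tu=\big(u,\ \frac{u(x)-u(y)}{|x-y|^s}\big)$. Both factors are reflexive separable Banach spaces (Proposition \ref{propiedades} for the first; the same Orlicz-space theory applies to the $\sigma$-finite measure space $(\R^n\times\R^n,\mu)$ for the second, the $\Delta_2$ condition \eqref{H2} guaranteeing reflexivity and separability), hence so is their product. The image $T(W^{s,G}(\R^n))$ is a closed subspace by the completeness just proved, and closed subspaces of reflexive separable spaces are reflexive and separable; transporting back along the isometry $T$ gives the claim for $W^{s,G}(\R^n)$.

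Finally, \emph{density of $C_c^\infty(\R^n)$}. Here I would argue in two steps, truncation-plus-translation to reduce to bounded compactly supported functions, then mollification. Given $u\in W^{s,G}(\R^n)$ and $\eta>0$, first show that $u$ can be approximated by $u\,\chi_{B_R}$ for large $R$ (controlling the tail of $\Phi_{s,G}$ via the integrability estimates \eqref{P2} together with dominated convergence on $\mu$, using $\Delta_2$ so that $G\big(\frac{|u(x)-u(y)|}{|x-y|^s}\big)$ is $\mu$-integrable near and far from the diagonal). Then for $v=u\,\chi_{B_R}\in W^{s,G}$ with bounded support, consider $v_\ve=v*\rho_\ve$ with a standard mollifier. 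One has $v_\ve\to v$ in $L^G$, and for the seminorm one writes, using Jensen's inequality applied to the probability measure $\rho_\ve(z)\,dz$,
\[
G\left(\frac{|v_\ve(x)-v_\ve(y)|}{|x-y|^s}\right)\le \int_{\R^n} G\left(\frac{|v(x-z)-v(y-z)|}{|x-y|^s}\right)\rho_\ve(z)\,dz,
\]
so that $\Phi_{s,G}(v_\ve)\le\Phi_{s,G}(v)$; combined with a.e.\ convergence of the difference quotients and Fatou, plus a uniform-integrability argument (Vitali's theorem, where $\Delta_2$ is again essential), one gets $\Phi_{s,G}\big(\frac{v_\ve-v}{\eta}\big)\to 0$, hence $[v_\ve-v]_{s,G}\to 0$. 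This reduces to approximating a fixed smooth compactly supported-data function, and multiplying by a cutoff plus smoothing gives elements of $C_c^\infty(\R^n)$. The main obstacle, and the place where the argument is genuinely more delicate than in the Lebesgue case, is the uniform integrability step in the mollification: in $L^p$ one simply uses $\|v_\ve-v\|_p\to 0$, but with a general Orlicz function convergence of modulars must be extracted from a.e.\ convergence plus equi-integrability, and it is precisely the $\Delta_2$ condition \eqref{H2} (equivalently \eqref{H4.1} and Lemma \ref{lema.lieber}) that upgrades the pointwise bound $\Phi_{s,G}(v_\ve)\le\Phi_{s,G}(v)$ to the norm convergence one needs.
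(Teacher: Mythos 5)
Your argument follows the paper's proof almost verbatim in its two structural steps: reflexivity and separability are obtained from the isometry $u\mapsto\bigl(u,\tfrac{u(x)-u(y)}{|x-y|^s}\bigr)$ into $L^G(\R^n)\times L^G(d\mu)$ with $d\mu=|x-y|^{-n}\,dx\,dy$, and density is obtained by truncation plus mollification with Jensen's inequality. Your completeness and Fatou arguments, and your remarks on Vitali's theorem and the role of \eqref{H2}, correctly fill in details the paper leaves to the reader.

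There is, however, one step that would fail as written: the truncation by the sharp cutoff $\chi_{B_R}$. Multiplication by a characteristic function is not a bounded operation on fractional spaces in general; already for $G(t)=t^p$ with $sp\ge 1$, a function that is smooth and nonvanishing on $\partial B_R$ produces $u\chi_{B_R}$ with a jump across the sphere, and such a jump has infinite $W^{s,p}$-seminorm. Concretely, for $x$ just inside and $y$ just outside $\partial B_R$ the integrand $G\bigl(|u(y)|\,|x-y|^{-s}\bigr)|x-y|^{-n}$ need not be integrable near the interface, so $u\chi_{B_R}\notin W^{s,G}(\R^n)$ in general. The paper avoids this by using the smooth cutoffs $\eta_k$ and the quantitative estimate of Lemma \ref{lema.trunc}, which bounds $\Phi_{s,G}(\eta_k u)$ in terms of $\Phi_{s,G}(u)$ and $\Phi_G(u)$ because $|\nabla\eta_k|\le 2/k$ tames the interface contribution. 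Replacing $\chi_{B_R}$ by $\eta_k$ (and then checking $\eta_k u\to u$ in $W^{s,G}$, e.g.\ by the same splitting as in that lemma applied to $(1-\eta_k)u$) repairs the argument; the rest of your proof then goes through.
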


\begin{proof}
First observe that if we denote $d\mu = |x-y|^{-n} \, dxdy$, then $d\mu$ is a regular Borel measure on the set $\Omega\times\Omega$ and the space $L^G(d\mu)$ is also a reflexive and separable Banach space.

Next, observe that the map
\begin{align*}
\Psi\colon W^{s,G}(\R^n) \to L^G(\R^n) \times L^G(d\mu),\quad
u \mapsto \left(u, \frac{u(x)-u(y)}{|x-y|^s}\right)
\end{align*}
is an isometry.

Therefore, the reflexivity and separability properties of $W^{s,G}(\R^n)$ are deduced from the ones of $L^G$.

Finally, the density result follows by the usual argument of truncation and regularization by convolution and uses Jensen's inequality. The details are analogous to that of the proof in the $L^G$ case and are left to the reader.
\end{proof}

\begin{rem}
Let $G$ be an Orlicz function according to Definition \ref{defi.Orlicz}. Observe that if we denote by $W^{-s,G^*}(\R^n)$ the (topological) dual space of $W^{s,G}(\R^n)$, then $L^{G^*}(\R^n)\subset W^{-s,G^*}(\R^n)$.

This is a consequence of the trivial inclusion $W^{s,G}(\R^n)\subset L^G(\R^n)$. Moreover, given $f\in L^{G^*}(\R^n)$, this inclusion is given by
$$
\langle f, u\rangle := \int_{\R^n} fu\, dx,
$$
for any $u\in W^{s,G}(\R^n)$.
\end{rem}

\subsection{Some technical lemmas}

In this subsection we analyze how the modular of a function is affected by regularization by convolution and by truncation. These facts will play a key role in the sequel.

As usual, we denote by $\rho\in C^\infty_c(\R^n)$ the standard mollifier with $\supp(\rho)=B_1(0)$ and $\rho_\ve(x)=\ve^{-n}\rho(\tfrac{x}{\ve})$ is the approximation of the identity. It follows that $\{\rho_\ve\}_{\ve>0}$ is a familiy of positive functions satisfying 
$$
\rho_\ve\in C_c^\infty(\R^n), \quad \supp(\rho_\ve)=B_\ve(0), \quad \intr \rho_\ve\,dx=1.
$$
Given $u\in L^G(\R^n)$ we define the regularized functions $u_\ve\in L^G(\R^n)\cap C^\infty(\R^n)$ as
\begin{equation} \label{regularizada}
u_\ve(x)=u*\rho_\ve(x).
\end{equation}

In this context we prove the following useful estimate on regularized functions.
\begin{lema} \label{lema.reg}
Let $u\in L^G(\R^n)$ and $\{u_\ve\}_{\ve>0}$ be the family defined in \eqref{regularizada}. Then
$$
\Phi_{s,G}(u_\ve)  \leq  \Phi_{s,G}(u)
$$
for all $\ve>0$ and $0<s<1$.
\end{lema}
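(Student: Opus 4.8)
The plan is to exploit the convexity of $G$ through Jensen's inequality, using that $\rho_\ve$ is a probability density, together with the translation invariance of the kernel $|x-y|^{-n}$. If $\Phi_{s,G}(u)=\infty$ there is nothing to prove, so we may assume it is finite.

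First I would write, for fixed $x,y\in\R^n$,
\[
u_\ve(x)-u_\ve(y)=\int_{\R^n}\rho_\ve(z)\big(u(x-z)-u(y-z)\big)\,dz,
\]
so that $|u_\ve(x)-u_\ve(y)|\le\int_{\R^n}\rho_\ve(z)\,|u(x-z)-u(y-z)|\,dz$. Dividing by $|x-y|^s$ and applying Jensen's inequality (valid since $G$ is convex and increasing by \eqref{H1} and $\rho_\ve\,dz$ is a probability measure) gives the pointwise bound
\[
G\left(\frac{|u_\ve(x)-u_\ve(y)|}{|x-y|^s}\right)\le\int_{\R^n}\rho_\ve(z)\,G\left(\frac{|u(x-z)-u(y-z)|}{|x-y|^s}\right)\,dz.
\]

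Next I would integrate this inequality against $d\mu=|x-y|^{-n}\,dx\,dy$ over $\R^n\times\R^n$. Since every integrand is nonnegative, Tonelli's theorem permits interchanging the order of integration, yielding
\[
\Phi_{s,G}(u_\ve)\le\int_{\R^n}\rho_\ve(z)\left(\iint_{\R^n\times\R^n}G\left(\frac{|u(x-z)-u(y-z)|}{|x-y|^s}\right)\frac{dx\,dy}{|x-y|^n}\right)dz.
\]
For each fixed $z$, the change of variables $x\mapsto x+z$, $y\mapsto y+z$ has unit Jacobian and leaves $|x-y|$ unchanged, so the inner double integral equals $\Phi_{s,G}(u)$. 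Hence
\[
\Phi_{s,G}(u_\ve)\le\int_{\R^n}\rho_\ve(z)\,\Phi_{s,G}(u)\,dz=\Phi_{s,G}(u),
\]
using once more that $\int_{\R^n}\rho_\ve\,dz=1$.

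The argument is essentially routine; the only points deserving a word of care are the measurability of the integrands on $\R^n\times\R^n\times\R^n$ (needed for Tonelli), which follows from that of $(x,y)\mapsto u(x)-u(y)$ and continuity of $G$, and the reduction of the vector/absolute-value Jensen step to the scalar monotone convexity of $G$ as indicated above. I do not expect a genuine obstacle here. (The same computation applies to the case $s=1$ as well, since $\nabla u_\ve=(\nabla u)*\rho_\ve$.)
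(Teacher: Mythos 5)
Your argument is correct and is essentially the same as the paper's: Jensen's inequality with respect to the probability measure $\rho_\ve\,dz$, followed by Tonelli and the translation invariance of the kernel $|x-y|^{-n}\,dx\,dy$. The only cosmetic difference is that you work directly in the variables $(x,y)$ while the paper passes to $(x,h)$ with $h=y-x$ before integrating; the content is identical.
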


\begin{proof}
By Jensen's inequality
\begin{align*}
 G\left( \left|u_\ve(x+h)-u_\ve(x)\right| |h|^{-s}  \right) &=  G\left( \left|\intr (u(x+h-y)-u(x-y)\rho_\ve(y) |h|^{-s}\,dy \right| \right)\\
 &\leq 
 \intr  G\left( \left| u(x+h-y)-u(x-y)\right| |h|^{-s} \right) \rho_\ve(y)\,dy.
\end{align*}

Integrating the last inequality over the whole $\R^n$ we get
\begin{align} \label{eq.regu}
\begin{split}
\intr G&\left( \frac{|u_\ve(x)-u_\ve(y)|}{|x-y|^s} \right) \frac{dx}{|x-y|^n}  =
\intr G\left( \frac{|u_\ve(x+h)-u_\ve(x)|}{|h|^s} \right) \frac{dx}{|h|^n} \\
&\leq 
\intr \left\{ \intr  G\left( \frac{| u(x+h-y)-u(x-y)|}{|h|^{s}} \right) \rho_\ve(y)\,dy \right\}  \frac{dx}{|h|^n} \\
&=
\intr \left\{ \intr  G\left( \frac{| u(x+h-y)-u(x-y)|}{|h|^{s}} \right) \frac{dx}{|h|^n} \right\} \rho_\ve(y)\,dy   \\
&=
\intr  G\left( \frac{| u(x+h)-u(x)|}{|h|^{s}} \right) \frac{1}{|h|^n}    \,dx
\end{split}
\end{align}
where we have used the invariance of the norm with respect to translations and the $\intr \rho \,dx =1$. Finally, since
$$
\iint_{\R^n\times\R^n} G\left( \frac{|u_\ve(x)-u_\ve(y)|}{|x-y|^s} \right) \frac{dx\,dy }{|x-y|^n} = \iint_{\R^n\times\R^n} G\left( \frac{|u_\ve(x+h)-u_\ve(x)|}{|h|^s} \right) \frac{dx\,dh }{|h|^n}
$$
the lemma follows just by integrating \eqref{eq.regu} respect to $h$.
\end{proof}

We also need estimates on modulars of truncated functions. We use the following notations: Let $\eta\in C_c^\infty(\R^n)$ such that $\eta=1$ in $B_1(0)$, $\supp (\eta)=B_2(0)$, $0\leq \eta\leq 1$ in $\R^n$ and $\|\nabla \eta\|_\infty\le 2$. Given $k\in\N$ we define $\eta_k(x)=\eta(\tfrac{x}{k})$. Observe that  $\{\eta_k\}_{k\in\N} \in C_c^\infty(\R^n)$ and
$$
0\leq \eta_k \leq 1, \quad \eta_k =1 \text{ in } B_k(0), \quad  \supp (\eta_k)=B_{2k} (0),\quad  |\nabla \eta_k|\leq \frac{2}{k}.
$$
Given $u\in L^G(\R^n)$ we define the truncated functions $u_k$, $k\in\N$ as 
\begin{equation} \label{truncada}
u_k=\eta_k u.
\end{equation}

In the next lemma we analyze the behavior of the modular of truncated functions. 

\begin{lema} \label{lema.trunc}
Let $u\in L^G(\R^n)$ and $\{u_k\}_{k\in\N}$ be the functions defined in \eqref{truncada}. Then
$$
\Phi_{s,G}(u_k) \leq \Phi_{s,G}(u) + \frac{\C^2}{2} n\omega_n\left(\frac{1}{s} + \frac{1}{k(1-s)}\right)\Phi_G (u),
$$
where $\C$ is the doubling constant defined in \eqref{H2} and $\omega_n$ is the measure of the unit ball in $\R^n$.
\end{lema}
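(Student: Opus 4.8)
The plan is to use the elementary splitting of the increment of $u_k=\eta_k u$,
\[
u_k(x)-u_k(y)=\eta_k(x)\bigl(u(x)-u(y)\bigr)+u(y)\bigl(\eta_k(x)-\eta_k(y)\bigr),
\]
together with the two features of the cutoff recorded above, namely $0\le\eta_k\le 1$ and, since $\|\nabla\eta_k\|_\infty\le 2/k$,
\[
|\eta_k(x)-\eta_k(y)|\le \min\Bigl\{1,\tfrac{2}{k}|x-y|\Bigr\}.
\]
Combining these one gets, for $x\neq y$,
\[
\frac{|u_k(x)-u_k(y)|}{|x-y|^s}\le \eta_k(x)\,\frac{|u(x)-u(y)|}{|x-y|^s}+\tfrac{2}{k}\,|u(y)|\,|x-y|^{1-s}\chi_{\{|x-y|<1\}}+\bigl(|u(x)|+|u(y)|\bigr)\chi_{\{|x-y|\ge1\}},
\]
where in the last term we also used $0\le\eta_k\le 1$. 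The proof then amounts to applying $G$, integrating against $d\mu=|x-y|^{-n}\,dx\,dy$, and estimating the three resulting pieces; the point of separating $\{|x-y|<1\}$ from $\{|x-y|\ge1\}$ is that the cutoff error has completely different sizes in the two regimes.

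On the far region $\{|x-y|\ge1\}$ I would discard all structure. Since $|x-y|^{-s}<1$ there, \eqref{P5} gives $G\bigl(\tfrac{|u(x)|+|u(y)|}{|x-y|^s}\bigr)\le|x-y|^{-s}G\bigl(|u(x)|+|u(y)|\bigr)$, and one more use of the subadditivity \eqref{P4} followed by Fubini and polar coordinates $h=x-y=r\theta$ (with $|\Sn|=n\omega_n$) bounds this contribution by a multiple, up to a power of the doubling constant $\C$, of $\bigl(\int_{\Sn}d\theta\bigr)\bigl(\int_1^\infty r^{-1-s}\,dr\bigr)\Phi_G(u)=\tfrac{n\omega_n}{s}\Phi_G(u)$.

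On the near region $\{|x-y|<1\}$ I would keep the structure. Apply \eqref{P4} (or, to keep the coefficient of the $\Phi_{s,G}(u)$ term as tight as possible, Lemma \ref{lema.triang}) to split $G$ of the sum $\eta_k(x)\tfrac{|u(x)-u(y)|}{|x-y|^s}+\tfrac{2}{k}|u(y)||x-y|^{1-s}$. For the first summand, \eqref{P5} together with $\eta_k(x)\le1$ gives $G\bigl(\eta_k(x)\tfrac{|u(x)-u(y)|}{|x-y|^s}\bigr)\le G\bigl(\tfrac{|u(x)-u(y)|}{|x-y|^s}\bigr)$, whose integral is controlled by $\Phi_{s,G}(u)$. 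For the second summand, note that for $|x-y|<1$ the factor $\tfrac{2}{k}|x-y|^{1-s}$ is $<1$, so \eqref{P5} gives $G\bigl(\tfrac{2}{k}|x-y|^{1-s}|u(y)|\bigr)\le\tfrac{2}{k}|x-y|^{1-s}G(|u(y)|)$, and Fubini plus polar coordinates turn this into a multiple (again up to a power of $\C$) of $\tfrac{1}{k}\bigl(\int_{\Sn}d\theta\bigr)\bigl(\int_0^1 r^{-s}\,dr\bigr)\Phi_G(u)=\tfrac{n\omega_n}{k(1-s)}\Phi_G(u)$.

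The one genuinely delicate point is the near-diagonal piece, and in particular that it is finite at all: the bare term $G\bigl(\tfrac{|u(y)|}{|x-y|^s}\bigr)$ is wildly non-integrable in $d\mu$ near $x=y$, and it is precisely the Lipschitz bound $|\eta_k(x)-\eta_k(y)|\le\tfrac{2}{k}|x-y|$ that supplies the extra factor $|x-y|^{1-s}$, whose radial integral $\int_0^1 r^{-s}\,dr=\tfrac{1}{1-s}$ converges because $s<1$. Everything else is bookkeeping: one collects the three pieces, normalises $\int_{\Sn}d\theta=n\omega_n$, and tracks the powers of $\C$ picked up from the successive uses of \eqref{P4} and \eqref{H2}. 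The subtlety in this bookkeeping is handling the $\eta_k(x)$ factor on the main term so as not to lose an extra multiplicative constant there; it is important to absorb $\eta_k(x)\le1$ through \eqref{P5} before, rather than after, invoking subadditivity, and if needed to split the domain further according to where $\eta_k\equiv1$, so that no cutoff error is paid for on the bulk of $\R^n\times\R^n$.
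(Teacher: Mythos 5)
Your proof is correct and follows essentially the same route as the paper: the same splitting of $u_k(x)-u_k(y)$ into $\eta_k\cdot(u(x)-u(y))$ plus $u\cdot(\eta_k(x)-\eta_k(y))$, the same near/far decomposition of the cutoff error at $|x-y|=1$, and the same uses of \eqref{P4}, \eqref{P5} and polar coordinates to produce the $\tfrac{1}{s}$ and $\tfrac{1}{k(1-s)}$ factors. The only slip is the claim that $\tfrac{2}{k}|x-y|^{1-s}<1$ on $\{|x-y|<1\}$, which fails for $k=1$; as in the paper's proof, one first extracts the factor $2$ via the $\Delta_2$ condition \eqref{H2} and then applies \eqref{P5} to $\tfrac{|x-y|^{1-s}}{k}\le 1$.
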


\begin{proof}
From \eqref{P4} and since $\eta_k\leq 1$ we have
\begin{align*}
G\left(\frac{|u_k(x)-u_k(y)|}{ |x-y|^{s}}\right) &\leq  \tfrac{\C}{2}  G\left(\frac{|u(x)-u(y)|}{ |x-y|^{s}}\right) + \tfrac{\C}{2}  G\left(\frac{|u(x)||\eta_k(x)-\eta_k(y)|}{ |x-y|^{s}}\right).
\end{align*}
Then we get
\begin{align*}
\iint_{\R^n\times\R^n} G & \left(  \frac{|u_k(x)-u_k(y)|}{|x-y|^s} \right) \frac{dx\,dy}{|x-y|^n}  \leq 
\tfrac{\C}{2} \Phi_{s,G}(u)+\\
&\quad + \tfrac{\C}{2}  \iint_{\R^n\times\R^n}  G\left(\frac{|u(x)||\eta_k(x)-\eta_k(y)|}{ |x-y|^{s}}\right) \frac{dx\,dy}{|x-y|^n}.
\end{align*}

The integral above can be splited as follows.
\begin{align*}
\left(\intr \int_{|x-y|\geq 1} + \intr \int_{|x-y|< 1}\right)  G\left(\frac{|u(x)||\eta_k(x)-\eta_k(y)|}{ |x-y|^{s}}\right) \frac{dx\, dy}{|x-y|^n} := I_1 + I_2.
\end{align*}

The monotonicity of $G$ and \eqref{P5} allow us to bound $I_1$ as follows
\begin{align*}
I_1 &\leq
\intr\int_{|x-y|\geq 1} G\left(2|u(x)| \right) \frac{1}{|x-y|^{n+s}}\,dx\,dy\\
&=
n\omega_n \int_1^\infty \frac{1}{r^{s+1}}dr \intr G\left(2|u(x)| \right) \,dx\\
&\leq \frac{\C n\omega_n}{s} \intr G(|u(x)|)\,dx .
\end{align*}

We deal now with $I_2$. Observe that, since $|\nabla \eta_k|\leq \tfrac{2}{k}$ and \eqref{P5} holds, 
\begin{align*}
\int_{|x-y|<1} G\left(\frac{|u(x)||\eta_k(x)-\eta_k(y)|}{ |x-y|^{s}}\right) \frac{dx}{|x-y|^n} 
 &\leq 
\int_{|x-y|\leq 1} G\left(\frac{2}{k} \frac{|u(x)|}{  |x-y|^{s-1}}\right) \frac{dy}{|x-y|^{n}} \\
&\leq \frac{n\omega_n \C}{k}  \int_0^1 G\left(|u(x)|\right)  \, \frac{dr}{r^s}\\
&= \frac{n\omega_n \C}{k(1-s)} G\left(|u(x)|\right),
\end{align*}
where we have used \eqref{H2} in the last inequality.

From these estimates the conclusion of the lemma follows.
\end{proof}

\subsection{Asymptotic behavior}
When analyzing the asymptotic behavior of $[\,\cdot\,]_{s,G}$ when $s\uparrow 1$ one needs to understand the asymptotic behavior of some integral quantities involving the Orlicz function $G$. This is the content of this subsection.

Let us begin with the following observation.
\begin{rem} 
Given $a>0$, from the monotonicity of $G$ and \eqref{P2} it follows that
\begin{equation}\label{cota.tildeG}
\begin{split}
\int_0^1  \int_{\Sn}   G\left( a |z_n| r^{1-s}\right)   dS_z \frac{dr}{r} 
& \leq n\omega_n \int_0^1     \frac{G\left( a r^{1-s} \right)}{r}   \,dr\\
& \le n\omega_n G(a)\int_0^1 \frac{1}{r^s}\, dr = \frac{n\omega_n G(a)}{1-s},
\end{split}
\end{equation}
where $\Sn$ is the unit sphere in $\R^n$.

Then, we may define the bounded functions $\tilde G^\pm \colon \R^+ \to \R$ as
\begin{equation} \label{phitilde.mas}
\tilde G^+(a):=\limsup_{s\uparrow 1} (1-s)\int_0^1  \int_{\Sn}   G\left( a |z_n| r^{1-s}\right)   dS_z \frac{dr}{r},
\end{equation}
and $\tilde G^-(a)$  is defined analogously by changing $\limsup$ by $\liminf$.

When both $\tilde G^\pm(a)$ coincide, we define
\begin{equation} \label{phitilde}
\tilde G(a):=\lim_{s\uparrow 1} (1-s)\int_0^1  \int_{\Sn}   G\left( a |z_n| r^{1-s}\right)   dS_z \frac{dr}{r}.
\end{equation}
\end{rem}

The following proposition shows that $\tilde G^\pm$ are Orlicz functions and that they are both equivalent to $G$. Hence, the spaces $L^G(\R^n)$ and $L^{\tilde G^\pm}(\R^n)$ are, in fact, equivalents.

\begin{prop} \label{prop.equiv}
The functions $\tilde G^\pm$ defined in \eqref{phitilde.mas} are Orlicz functions. Moreover, there exist positive constants $c_1$ and $c_2$ such that
$$
c_1 G(a) \leq  \tilde G^\pm(a) \leq  c_2  G(a)\quad \text{for every } a>0.
$$
\end{prop}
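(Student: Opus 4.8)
The plan is to verify the three Orlicz axioms \eqref{H1}--\eqref{H3} for $\tilde G^\pm$ directly from the defining formula \eqref{phitilde.mas}, and to extract the two-sided bound $c_1 G(a)\le \tilde G^\pm(a)\le c_2 G(a)$ as a byproduct of the same estimates. The key observation is that $\tilde G^\pm(a)$ is, up to the factor $(1-s)$ and the $\limsup/\liminf$, an integral average of $G$ evaluated along the scaled directions $a|z_n|r^{1-s}$, so monotonicity, convexity and the $\Delta_2$ condition of $G$ pass through integration essentially for free.

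First I would establish the upper bound. The computation \eqref{cota.tildeG} already gives
$$
(1-s)\int_0^1\int_{\Sn} G\big(a|z_n|r^{1-s}\big)\,dS_z\,\frac{dr}{r}\le n\omega_n\, G(a),
$$
uniformly in $s$, so taking $\limsup_{s\uparrow1}$ yields $\tilde G^\pm(a)\le n\omega_n G(a)=:c_2 G(a)$. In particular each $\tilde G^\pm$ is finite, which is what makes the definition meaningful. For the lower bound I would restrict the $z$-integration to a fixed cap where $|z_n|\ge c_0>0$ (of positive surface measure $\sigma_0$) and the $r$-integration to an interval $[r_0,1]$ on which $r^{1-s}\ge r_0$ for all $s\in(0,1)$; using monotonicity of $G$ together with \eqref{P5} (or \eqref{ec.escala} of Lemma \ref{lema.iterado} to handle the factor $c_0 r_0\le1$) one bounds $G(a|z_n|r^{1-s})$ from below by a fixed positive multiple of $G(a)$ on this region, so that
$$
(1-s)\int_0^1\int_{\Sn}G\big(a|z_n|r^{1-s}\big)\,dS_z\,\frac{dr}{r}\ge (1-s)\log(1/r_0)\cdot \sigma_0\cdot c\, G(a).
$$
The factor $(1-s)$ kills this as $s\uparrow1$, so a naive restriction is too lossy; instead one must keep $r$ ranging over a shrinking interval near $1$, say $r\in[1-(1-s),1]$, where $r^{1-s}$ is bounded below by a universal constant and $\int \frac{dr}{r}\asymp (1-s)$, so the prefactor $(1-s)$ is exactly compensated. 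This gives $\tilde G^\pm(a)\ge c_1 G(a)$ with $c_1>0$ independent of $a$. This balancing is the step I expect to be the main obstacle: one has to choose the region of integration so that $(1-s)\times(\text{measure of region})$ stays bounded away from $0$ while $G$ of the argument stays comparable to $G(a)$.

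With the two-sided bound in hand, the Orlicz axioms follow quickly. Monotonicity and $G(0)=0$ are immediate from the monotonicity of $G$ and the fact that the integrand vanishes at $a=0$. Convexity of $\tilde G^\pm$: for $\tilde G^-$, $\liminf$ of a sum is $\ge$ sum of $\liminf$s, which goes the wrong way, so one argues instead that $a\mapsto (1-s)\int_0^1\int_{\Sn}G(a|z_n|r^{1-s})\,dS_z\,\frac{dr}{r}$ is convex for each fixed $s$ (being an integral of convex functions of $a$), and a $\limsup$ or $\liminf$ of convex functions is convex — this handles both $\tilde G^+$ and $\tilde G^-$. Continuity then follows from convexity plus finiteness on $\R^+$ (convex finite functions on an open interval are continuous), with continuity at $0$ coming from the sandwich $c_1G(a)\le\tilde G^\pm(a)\le c_2G(a)$ and $G(0^+)=0$. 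The $\Delta_2$ condition \eqref{H2} transfers through the bound: $\tilde G^\pm(2a)\le c_2 G(2a)\le c_2\C G(a)\le \tfrac{c_2}{c_1}\C\,\tilde G^\pm(a)$, so $\tilde G^\pm$ satisfies $\Delta_2$ with constant $\tfrac{c_2}{c_1}\C$ (and one notes this is $>2$, or replaces it by $\max\{\tfrac{c_2}{c_1}\C,\,2+1\}$, to match the normalization in \eqref{H2}). Super-linearity at zero \eqref{H3} likewise follows from $\tilde G^\pm(a)/a\le c_2 G(a)/a\to0$. Finally, the asserted equivalence of the spaces $L^G(\R^n)$ and $L^{\tilde G^\pm}(\R^n)$ is a standard consequence of $c_1G\le\tilde G^\pm\le c_2G$ together with the $\Delta_2$ condition, which I would simply cite from \cite{Adams} or \cite{KrRu61}.
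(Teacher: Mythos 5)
Your upper bound and your treatment of the Orlicz axioms run along the same lines as the paper, but your lower bound --- which is the substantive part of the proposition --- contains a genuine error. You restrict the $r$-integration to the shrinking interval $[1-(1-s),1]$ and claim that, since $\int \frac{dr}{r}\asymp(1-s)$ there, ``the prefactor $(1-s)$ is exactly compensated.'' It is not: the product is then of order $(1-s)\cdot(1-s)=(1-s)^2\to 0$, so this region yields no positive constant $c_1$ in the limit. To cancel a prefactor of $(1-s)$ you need the inner integral to be of order $(1-s)^{-1}$, i.e.\ you must integrate over a set whose $\frac{dr}{r}$-measure blows up like $(1-s)^{-1}$; such a set necessarily extends down to $r\approx e^{-1/(1-s)}$, very close to $0$, not up near $r=1$. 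The surviving mass lives near $r=0$, where $\frac{dr}{r}$ is non-integrable, and the whole point is that for $s$ close to $1$ the factor $r^{1-s}$ stays of order $1$ on all of $[e^{-1/(1-s)},1]$. Your diagnosis that a fixed region is ``too lossy'' is right, but your repair goes in the wrong direction.

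The paper's argument avoids choosing any region: it applies Lemma \ref{lema.iterado} pointwise, $G(a|z_n|r^{1-s})\ge (|z_n|r^{1-s})^{2q}G(a)$, and integrates over all of $(0,1)\times\Sn$, getting $G(a)\left(\int_{\Sn}|z_n|^{2q}dS_z\right)\int_0^1 r^{2q(1-s)-1}\,dr=\frac{c(n,q)}{1-s}G(a)$, whose $(1-s)^{-1}$ exactly cancels the prefactor. A secondary caveat in your axiom check: a pointwise $\liminf$ of convex functions need not be convex (e.g.\ $f_k(x)=(-1)^k x$ has $\liminf_k f_k(x)=-|x|$), so convexity of $\tilde G^-$ does not follow from the argument you give; only the $\limsup$ case does. (The paper glosses over this too, declaring \eqref{H1} ``trivial,'' so this is a shared gap rather than a point of divergence.)
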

\begin{proof}
We prove the proposition for $\tilde G^+$. The proof for $\tilde G^-$ is completely analogous.

First let us see that $\tilde G^+$ is an Orlicz function. Hypotheses \eqref{H1} and \eqref{H2} are trivial consequences of the fact that $G$ verifies those hypotheses.

Finally, \eqref{H3} is an immediate consequence of the facts that $G$ verifies \eqref{H3} and of \eqref{cota.tildeG}.

It remains to show that $\tilde G^+$ is equivalent to $G$. 

Observe that \eqref{cota.tildeG} gives $\tilde G^+(a)\le n\omega_n G(a)$.

On the other hand, from Lemma \ref{lema.iterado}, there exists $q>1$ such that 
\begin{align*}
\int_0^1  \int_{\Sn}   G\left(a |z_n| r^{1-s}\right)&\,dS_z \frac{dr}{r}  \geq
\int_0^1  \int_{\Sn}  (|z_n| r^{1-s})^{2q} G(a)\,dS_z \frac{dr}{r}\\
&=
G(a) \left(\int_{\Sn} |z_n|^{2q} \,dS_z \right) \left(\int_0^1   r^{2q(1-s)-1}  \,dr \right)\\
&= \frac{c(n,q)}{1-s} G(a).
\end{align*}

The result is complete.
\end{proof}

We end this subsection by computing explicitly $\tilde G$ for some of the Orlicz functions given in Example \ref{ej.orlicz}. We will denote by $K_{n,p}=\int_{\Sn} |w_n|^p\, dS_{w}$.
 
\begin{exam}
\begin{enumerate}
\item Given $p>1$, we consider  $G(t)=t^p$, $t\in \R^+$. Since
 \begin{align*}
 \int_{|w|=1}G(a|w_n| r^{1-s} )\,dS_w &= a^p r^{(1-s)p} K_{n,p} 
 \end{align*}
 we arrive at the expression
 \begin{align*}
 \tilde G(a) =  \frac{a^p}{p}  K_{n,p} .
 \end{align*}

\item Given $p>1$ we consider $G(t)=t^p|\log t|$, $t\in \R^+$. In this case we have that
 \begin{align*}
 \int_{\Sn} G(a|w_n| r^{1-s} )\,dS_w &= a^p r^{(1-s)p} \left( K_{n,p}  |\log a|   + K_{\log,n,p} - (1-s)K_{n,p} \log r\right).
 \end{align*}
 where  $K_{\log,n,p}$ is a positive constant given by
 $$
 K_{\log,n,p}=\int_{\Sn} |w_n|^p |\log |w_n| | \,dS_w.
 $$
 Therefore, we obtain
 \begin{align*}
 \tilde G(a)=  \frac{a^p}{p}\left(K_{n,p} |\log a|  + K_{\log,n,p} + \frac{K_{n,p}}{p}\right).
 \end{align*}

 \item If $G_1,\dots,G_m$ are Orlicz functions and $c_1,\dots,c_m\ge 0$, then if we denote $G=\sum_{k=1}^m c_k G_k$ we easily obtain
$$
\tilde G^\pm = \sum_{k=1}^m c_k \tilde G_k^\pm. 
$$

\item If $1<q<p$ and $G(t) = \max\{t^p, t^q\}$, then, after some computations we arrive at
$$
\tilde G(a) = \begin{cases}
\displaystyle K_{n,q} \frac{a^q}{q} & \text{if } a\le 1\\
\displaystyle 
\frac{a^q}{q} \int_{|w_n|\le \frac{1}{a}} |w_n|^q\, dS_w + \frac{a^p}{p}\int_{|w_n|> \frac{1}{a}} |w_n|^p\, dS_w \\
 + \left(\frac{1}{q}-\frac{1}{p}\right)\displaystyle\int_{|w_n|> \frac{1}{a}} \, dS_w & \text{if } a>1.
\end{cases}
$$
\end{enumerate}
\end{exam}

\section{The Rellich-Kondrachov theorem for $W^{s,G}$ spaces.}
The aim of this section is to prove the compactness of the immersion $W^{s,G}$ into $L^G$.

The spirit of the proof lies in proving an equi-continuity estimate in order to apply a variant of the well-known Fr\`echet-Kolmogorov Compactness Theorem.

The main result is this sections reads as follows.

\begin{thm} \label{teo.comp}
Let $0<s<1$ and $G$ an Orlicz function. Then for every $\{u_n\}_{n\in\N}\subset W^{s,G}(\R^n)$ a bounded sequence, i.e., $\sup_{n\in\N} ( \Phi_{s,G}(u_n) + \Phi_G(u_n) )<\infty$, there exists $u\in W^{s,G}(\R^n)$ and a subsequence $\{u_{n_k}\}_{k\in\N}\subset \{u_n\}_{n\in\N}$ such that $u_{n_k}\to u$ in $L^G_{\text{loc}}(\R^n)$.
\end{thm}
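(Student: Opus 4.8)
The strategy is the classical one for a Rellich-Kondrachov type theorem: reduce the statement to a local compactness result and apply a version of the Fréchet-Kolmogorov criterion adapted to Orlicz spaces. The core of the argument is an equicontinuity estimate for translations, measured in the $L^G$ modular, which must be controlled uniformly for the bounded sequence $\{u_n\}$. The outline is as follows.

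\textit{Step 1: A translation estimate via the seminorm.} First I would prove that for a fixed bounded domain $\Omega\subset\R^n$ and a small translation $h\in\R^n$ one has a bound of the form
$$
\int_{\Omega} G\left(\frac{|u(x+h)-u(x)|}{|h|^s}\right)\,dx \le C\,\Phi_{s,G}(u) + (\text{lower order terms})
$$
by comparing the difference $u(x+h)-u(x)$ with the Gagliardo double integral restricted to a suitable neighborhood; the technique is to average the identity $u(x+h)-u(x) = [u(x+h)-u(y)]+[u(y)-u(x)]$ over $y$ in a ball of radius $\sim|h|$ around $x$, use the subadditivity \eqref{P4} and monotonicity of $G$, and then recognize the two resulting terms as pieces of $\iint G(|u(x)-u(y)||x-y|^{-s})|x-y|^{-n}\,dxdy$ localized to $|x-y|\lesssim |h|$. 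This yields, after absorbing the power $|h|^{-s}$, a bound like $\int_\Omega G(|u(x+h)-u(x)|)\,dx \le C |h|^{s} (\Phi_{s,G}(u)+\Phi_G(u))$ up to constants depending on $n,s,G$, which tends to $0$ as $h\to 0$ uniformly over the bounded family.

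\textit{Step 2: From modular smallness to $L^G$-norm smallness.} I would then convert the modular estimate into a statement about the Luxemburg norm: since $\Phi_G(u_n(\cdot+h)-u_n)\to 0$ uniformly in $n$ as $h\to 0$, and using the $\Delta_2$ condition \eqref{H2} (equivalently \eqref{P4} and Lemma \ref{lema.lieber}) together with Remark \ref{remark.cota.inf}, smallness of the modular forces smallness of $\|u_n(\cdot+h)-u_n\|_{L^G(\Omega)}$, uniformly in $n$. The passage uses the standard fact that under $\Delta_2$ the modular and norm topologies coincide and $\Phi_G(u)\le 1 \iff \|u\|_G\le 1$ type comparisons; here Lemma \ref{lema.iterado} gives the needed quantitative control near zero.

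\textit{Step 3: Apply Fréchet-Kolmogorov and diagonalize.} With the uniform equicontinuity under translations (Step 2) plus the uniform bound $\sup_n\Phi_G(u_n)<\infty$ (hence $\sup_n\|u_n\|_{L^G(\Omega)}<\infty$), the Fréchet-Kolmogorov compactness theorem in Orlicz spaces (which holds precisely because $\Delta_2$ makes $L^G$ a separable space in which bounded equi-integrable families with small translations are precompact) gives a subsequence converging in $L^G(\Omega)$ for each fixed ball $\Omega = B_R$. A diagonal argument over $R=1,2,3,\dots$ produces a single subsequence $\{u_{n_k}\}$ and a limit $u\in L^G_{\text{loc}}(\R^n)$ with $u_{n_k}\to u$ in $L^G(B_R)$ for every $R$. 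Finally, to see $u\in W^{s,G}(\R^n)$, I would use lower semicontinuity: passing to a further subsequence converging a.e., Fatou's lemma applied to the Gagliardo integrand (with $G$ continuous) gives $\Phi_{s,G}(u)\le \liminf_k \Phi_{s,G}(u_{n_k})<\infty$, and similarly $\Phi_G(u)<\infty$, so $u\in W^{s,G}(\R^n)$.

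\textit{Main obstacle.} The delicate point is Step 1: obtaining the translation estimate with the \emph{correct} localization so that the constant stays bounded as $s\uparrow 1$ is more subtle than in the single-exponent case, because one cannot use Hölder/Minkowski directly and must rely only on convexity, \eqref{P4}, \eqref{P5} and Lemma \ref{lema.lieber}; one has to be careful that the averaging over the ball $B_{|h|}(x)$ and the change of variables do not generate a factor that blows up. For fixed $s\in(0,1)$, however (which is all the theorem claims), the estimate is robust, and the rest is standard Orlicz-space machinery together with the Fréchet-Kolmogorov criterion.
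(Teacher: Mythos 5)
Your proposal is correct and follows essentially the same route as the paper: the translation estimate obtained by averaging $u(x+h)-u(x)=[u(x+h)-u(y)]+[u(y)-u(x)]$ over $y\in B_{|h|}(x)$ and recognizing the two pieces inside the Gagliardo double integral is exactly the paper's Lemma \ref{lema.comp}, and the conclusion via the Orlicz-space Fr\'echet--Kolmogorov criterion plus Fatou's lemma for $u\in W^{s,G}(\R^n)$ matches the paper's proof of Theorem \ref{teo.comp}. The only cosmetic difference is that the paper states the equicontinuity bound directly at the modular level, $\Phi_G(\tau_h u-u)\le C|h|^s\Phi_{s,G}(u)$ on all of $\R^n$, and invokes a modular version of Fr\'echet--Kolmogorov, so your Step 2 (passing from modular to Luxemburg-norm smallness via $\Delta_2$) is absorbed into the cited compactness theorem.
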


The following technical lemma provides the equi-continuity of modulars.

\begin{lema} \label{lema.comp}
Let $0<s<1$ and $G$ be an Orlicz function. Then, there exists a constant $C>0$ such that
$$
\Phi_G (\tau_h u - u) \leq C |h|^s \Phi_{s,G}(u),
$$
for every $u\in W^{s,G}(\R^n)$ and every $0<|h|<\frac12$, where $\tau_h u(x) =u(x+h)$.
\end{lema}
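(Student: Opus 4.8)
The plan is to express the difference $\tau_h u - u$ as an average of translates over a ball of radius comparable to $|h|$, and then use Jensen's inequality together with the translation invariance of the Gagliardo seminorm integral. First I would fix $0 < |h| < \tfrac12$ and write, for a unit mollifier-type weight or simply for the normalized indicator $\chi_{B_{|h|}(0)}/|B_{|h|}(0)|$,
\begin{equation}\label{eq.avg.plan}
u(x+h)-u(x) = \pint_{B_{|h|}(0)} \big( u(x+h) - u(x+z) \big)\, dz + \pint_{B_{|h|}(0)} \big( u(x+z) - u(x) \big)\, dz,
\end{equation}
which splits the increment over the ``long'' jump $h$ into two increments each of length at most $2|h|$. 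Applying $G$ to $|\tau_h u(x) - u(x)|$, using the subadditivity-type bound \eqref{P4}, and then Jensen's inequality on each averaged term, I would obtain a pointwise bound of the form
$$
G(|u(x+h)-u(x)|) \le \frac{\C}{2}\, \pint_{B_{|h|}(0)} \Big( G(|u(x+h)-u(x+z)|) + G(|u(x+z)-u(x)|) \Big)\, dz .
$$

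Next I would integrate in $x$ over $\R^n$. By translation invariance, $\int_{\R^n} G(|u(x+z)-u(x)|)\,dx = \int_{\R^n} G(|u(w)-u(w-z)|)\,dw$, and similarly for the term with $x+h$ in place of $x+z$ (there the relevant vector is $h-z$, whose length is at most $2|h|$). So after the change of variables the right-hand side is controlled by an average over $z\in B_{|h|}(0)$ of $\int_{\R^n} G(|u(x+v)-u(x)|)\,dx$ with $|v|\le 2|h|$. To bring this into the form $|h|^s\Phi_{s,G}(u)$, I would insert the weight $|v|^{-n}$ appearing in $\Phi_{s,G}$: since $G$ is increasing and $|v|\le 2|h| < 1$, one has $G(|u(x+v)-u(x)|) \le G\big(|u(x+v)-u(x)|/|v|^s\big)$ is false in general, so instead I would use the homogeneity more carefully — writing $G(t) \le G(t/|v|^s)$ when $|v|\le 1$ (valid because $G$ is increasing and $|v|^{-s}\ge 1$), then
$$
\int_{\R^n} G(|u(x+v)-u(x)|)\,dx \le |v|^{n}\cdot |v|^{-n}\int_{\R^n} G\!\left(\frac{|u(x+v)-u(x)|}{|v|^s}\right) dx \le (2|h|)^{n}\, \Psi(v),
$$
where $\Psi(v) := |v|^{-n}\int_{\R^n} G(|u(x+v)-u(x)|/|v|^s)\,dx$. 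Finally, averaging $\Psi(v)$ over the relevant range of $v$ and comparing with $\Phi_{s,G}(u) = \int_{\R^n}\Psi(v)\,dv$ (after passing to the $(x,h)\mapsto(x,v)$ form of the modular) — keeping in mind that all $v$ involved lie in $B_{2|h|}(0)$ whose measure is $\omega_n (2|h|)^n$ — I would extract a factor $|h|^{n}/|h|^{n} = 1$ from the measure normalizations and a genuine gain of $|h|^{s}$ from the scaling of the $|v|^{-s}$ factor; more precisely, bounding $\pint_{B_{|h|}} \Psi(v)\,dv$ is not directly $\Phi_{s,G}(u)$, so I would instead integrate the pointwise inequality against $dx$ only and then dominate the resulting double integral over $\{(x,z): |z| \le |h|\}$ by the full modular, at which point the explicit $|h|$-powers coming from $\operatorname{diam} = 2|h|$ versus the $|v|^{-n}$ weight produce exactly one surviving power, which after optimizing the exponents is $|h|^{s}$.

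The main obstacle I anticipate is bookkeeping the powers of $|h|$ correctly: unlike the $L^p$ case where one can pull the weight $|v|^{-n-sp}$ out cleanly, here $G$ is not homogeneous, so the gain of $|h|^s$ must come entirely from the change of variables in the $|v|^{-s}$ scaling inside the argument of $G$ (using that $G$ is increasing plus property \eqref{P5} to compare $G(a|v|^{-s})$ with $G(a)$ when needed), combined with the fact that the domain of integration in $z$ has measure $\sim |h|^n$ while the Gagliardo weight contributes $|v|^{-n}$. I would likely argue as follows to isolate the $|h|^s$: restrict attention to the region $\tfrac12|h| \le |z| \le |h|$ in the averaging (which still carries a fixed fraction of the mass), so that on that region $|z|^{-s}$ and $|h-z|^{-s}$ are both comparable to $|h|^{-s}$ up to absolute constants; then $G(|u(x+h)-u(x+z)|) \le G\big(|h|^{-s}|u(x+h)-u(x+z)| \cdot |h|^{s}\big) \le |h|^{s} G\big(|h|^{-s}|u(x+h)-u(x+z)|\big)$ by \eqref{P5} (since $|h|^s < 1$), and similarly for the other term; integrating in $x$ and $z$ and using translation invariance together with $|z|^{-s}\sim|h|^{-s}$, $|h-z|^{-s}\sim|h|^{-s}$, the factor $|z|^{-n}$ needed to reconstruct $\Phi_{s,G}$ is supplied (up to a constant) by $|h|^{-n}$, which is exactly cancelled by the measure $|h|^n$ of the $z$-domain — leaving the clean bound $\Phi_G(\tau_h u - u)\le C|h|^s\Phi_{s,G}(u)$ with $C$ depending only on $n$ and the doubling constant $\C$.
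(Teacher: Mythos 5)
Your proposal follows essentially the same route as the paper: split $u(x+h)-u(x)$ through an average over a ball of radius $|h|$, apply the $\Delta_2$/subadditivity bound \eqref{P4} and Jensen, and then extract the factor $|h|^s$ from \eqref{P5} applied with $b=|v|^s<1$ (where $v=z$ or $v=h-z$), while the Gagliardo weight $|v|^{-n}$ is recovered from $|v|^n\le (2|h|)^n$ against the normalizing factor $|h|^{-n}$ of the average. The mechanism you settle on in the last paragraph is the right one and the argument closes. One correction: the claim that $|h-z|^{-s}$ is \emph{comparable} to $|h|^{-s}$ on the annulus $\tfrac12|h|\le|z|\le|h|$ is false ($|h-z|$ can be arbitrarily small when $z$ is near $h$), but you do not need two-sided comparability nor the restriction to that annulus at all: the only inequalities the argument uses are the one-sided bounds $|z|\le|h|$ and $|h-z|\le 2|h|$, hence $|h|^{-s}\le 2^{s}|h-z|^{-s}$ and $|h|^{-n}\le 2^{n}|h-z|^{-n}$, valid on the whole ball $B_{|h|}(0)$; with these, the change of variables $z\mapsto v=h-z$ lands in $B_{2|h|}(0)$ and the estimate goes through exactly as in the paper, with constant depending only on $n$, $s$ (through $2^{n+s}$) and the doubling constant $\C$.
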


\begin{proof}
The $\Delta_2$ condition \eqref{H2} gives that
$$
G(| u(x+h)-u(x) |) \leq \C \left[ G(| u(x+h)-u(y) |) + G(| u(y)-u(x) |) \right]
$$
for all $y\in B_{|h|}(x)$. Then
\begin{align} \label{ec1.l1}
\begin{split}
\Phi_G (\tau_h u - u) &= \frac{1}{|B_{|h|}(x)|}\int_{B_{|h|}(x)}\intr G(| u(x+h)-u(x) |) \,dx\,dy\\
&\leq \frac{\C}{|h|^n\omega_n } \int_{B_{|h|}(x)}\intr G(| u(x+h)-u(y) |) \,dx\,dy\\
&\quad +\frac{\C}{|h|^n\omega_n }  \int_{B_{|h|}(x)}\intr G(| u(y)-u(x) |) \,dx\,dy  \\
&= \frac{\C}{|h|^n \omega_n }( I_1+I_2 ).
\end{split}
\end{align}
Given $x\in \R^n$ and $y\in B_{|h|}(x)$ we have that
$$
|x-y|\leq |h|, \qquad |x+h-y|\leq |x-y|+|h|\leq 2|h|.
$$
Then, the integral $I_1$ can be bounded as  
\begin{align*}
I_1&= \int_{B_{|h|}(x)}\intr G\left(\frac{| u(x+h)-u(y) |}{|x+h-y|^s }|x+h-y|^s\right) |x+h-y|^n \frac{dx\,dy}{|x+h-y|^n}\\
&\leq \int_{B_{|h|}(x)}\intr G\left(\frac{|u(x+h)-u(y) |}{|x+h-y|^s }2^s|h|^s\right) 2^n|h|^n \frac{dx\,dy}{|x+h-y|^n}\\
&\leq 2^{n+s}|h|^{n+s}\iint_{\R^n\times\R^n} G\left(\frac{| u(x)-u(y) |}{|x-y|^s }\right) \frac{dx\,dy}{|x-y|^n}\\
&= \C|h|^{n+s}\Phi_{s,G}(u),
\end{align*}
where we have used the monotonicity of $G$ and property \eqref{P5}.
Analogously, 
$$
I_2\leq C|h|^{n+s}\Phi_{s,G}(u).
$$
Finally, inserting the two upper bounds found above in \eqref{ec1.l1} we obtain that
$$
\Phi_G (\tau_h u - u) \leq C |h|^s \Phi_{s,G}(u)
$$
and the lemma follows.
\end{proof}

 \begin{proof}[Proof of Theorem \ref{teo.comp}]
First observe that if $\{u_k\}_{k\in \N}$ is bounded in $W^{s,G}(\R^n)$, then is also bounded in $L^G(\R^n)$. Let us denote by $M=\sup_{k\in\N} ( \Phi_{s,G}(u_k) + \Phi_G(u_k) )<\infty$. Then Lemma \ref{lema.comp} gives that
$$
\sup_{k\in \N}\Phi_G (\tau_h u_k - u_k) \leq C M |h|^s .
$$
Now, applying a variant of the Fr\`echet-Kolmogorov Theorem (see \cite[Theorem 11.5]{KrRu61}) we are able to claim the existence of a function $u\in L^G(\R^n)$ and a subsequence, that we still denote by $\{u_k\}_{k\in\N}$, such that $u_k\to u$ in $L^G_{loc}(\R^n)$.

Moreover, $u\in W^{s,G}(\R^n)$. Indeed, up to a subsequence, $u_k\to u$ a.e. $\R^n$. Then
$$
0\leq \lim_{k\to \infty}G\left( \frac{|u_k(x)-u_k(y)|}{|x-y|^s}\right) = G\left( \frac{|u(x)-u(y)|}{|x-y|^s}\right) \quad \text{a.e. } (x,y)\in\R^n\times \R^n.
$$
Therefore, Fatou's Lemma together with the lower semicontinuity of $G$ gives
\begin{align*}
\Phi_{s,G}(u) &= \iint_{\R^n\times\R^n}  G\left(\frac{|u(x)-u(y)|}{|x-y|^s} \right) \frac{dx\,dy}{|x-y|^n}\\ 
&\leq \liminf_{k\to\infty} \iint_{\R^n\times\R^n}  G\left(\frac{|u_k(x)-u_k(y)|}{|x-y|^s} \right) \frac{dx\,dy}{|x-y|^n} \\
&\leq \sup_{k\in\N}\Phi_{s,G}(u_k)\le M <\infty
\end{align*}
as required.
\end{proof}

\section{The main result}
This section is aimed at proving the natural extension to fractional Orlicz spaces of a celebrated theorem of Bourgain, Brezis and Mironescu \cite{BBM}, namely:

\begin{thm} \label{main1}
Let $G$ be an Orlicz function such that the limit in \eqref{phitilde} exists. Then, given $u\in L^{ G}(\R^n)$ and $0<s<1$ it holds that
\begin{equation} \label{bbm}
\lim_{s\uparrow 1} (1-s) \Phi_{s,G}(u) = \Phi_{\tilde G}(\nabla u),
\end{equation}
where $\tilde G$ is defined in \eqref{phitilde}.
\end{thm}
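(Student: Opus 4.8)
The plan is to establish the two one-sided bounds
$$\limsup_{s\uparrow 1}(1-s)\Phi_{s,G}(u)\le\Phi_{\tilde G}(\nabla u),\qquad \liminf_{s\uparrow 1}(1-s)\Phi_{s,G}(u)\ge\Phi_{\tilde G}(\nabla u),$$
with the convention $\Phi_{\tilde G}(\nabla u)=+\infty$ whenever $u\notin W^{1,\tilde G}(\R^n)$, which by Proposition \ref{prop.equiv} is the same as $u\notin W^{1,G}(\R^n)$. The cornerstone is the \emph{model case} $\varphi\in C_c^\infty(\R^n)$. After the substitution $y=x+h$ one has $\Phi_{s,G}(\varphi)=\intr\intr G(|\varphi(x+h)-\varphi(x)||h|^{-s})|h|^{-n}\,dh\,dx$, which we split at $|h|=\delta$. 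On $|h|\ge\delta$ the bound $|\varphi(x+h)-\varphi(x)|\le|h|\int_0^1|\nabla\varphi(x+th)|\,dt$ together with Jensen's inequality, \eqref{P5} and \eqref{P4} shows that $(1-s)$ times this part is $o(1)$ as $s\uparrow 1$. On $|h|<\delta$, a Taylor expansion gives $|\varphi(x+h)-\varphi(x)|=\nabla\varphi(x)\cdot h+O(\|D^2\varphi\|_\infty|h|^2)$; since the relevant arguments stay in a fixed bounded interval, the local Lipschitz regularity \eqref{P1} lets us replace $G$ of the difference quotient by $G(|\nabla\varphi(x)\cdot\tfrac{h}{|h|}|\,|h|^{1-s})$ up to an error of size $O((1-s)\delta)$. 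Passing to polar coordinates, using rotation invariance of the surface measure on $\Sn$ to turn $|\nabla\varphi(x)\cdot z|$ into $|\nabla\varphi(x)||z_n|$, discarding the remaining range $\delta\le r\le 1$ (a further $o(1)$ contribution by \eqref{P5}), and finally invoking the hypothesis that the limit \eqref{phitilde} exists, the domination \eqref{cota.tildeG} and dominated convergence, we obtain $\lim_{s\uparrow 1}(1-s)\Phi_{s,G}(\varphi)=\Phi_{\tilde G}(\nabla\varphi)$.

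For the $\limsup$ bound we may assume $u\in W^{1,G}(\R^n)$ (otherwise the right-hand side is $+\infty$). Running the estimate used for the far part above but over all $h$ yields, for every $w\in W^{1,G}(\R^n)$, the crude inequality $\limsup_{s\uparrow 1}(1-s)\Phi_{s,G}(w)\le n\omega_n\Phi_G(\nabla w)$. Given $\delta>0$ and $\varphi\in C_c^\infty(\R^n)$, Lemma \ref{lema.triang} applied to $u=\varphi+(u-\varphi)$ gives $\Phi_{s,G}(u)\le C_\delta\,\Phi_{s,G}(u-\varphi)+(1+\delta)^p\,\Phi_{s,G}(\varphi)$; multiplying by $(1-s)$, letting $s\uparrow 1$ and combining the model case with the crude bound,
$$\limsup_{s\uparrow 1}(1-s)\Phi_{s,G}(u)\le C_\delta\, n\omega_n\,\Phi_G(\nabla(u-\varphi))+(1+\delta)^p\,\Phi_{\tilde G}(\nabla\varphi).$$
Letting $\varphi\to u$ in $W^{1,G}(\R^n)$ (density, Proposition \ref{propiedades}) kills the first term, while $\Phi_{\tilde G}(\nabla\varphi)\to\Phi_{\tilde G}(\nabla u)$ because $\tilde G\in\Delta_2$; finally $\delta\downarrow 0$ gives the desired bound.

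For the $\liminf$ bound, put $L:=\liminf_{s\uparrow 1}(1-s)\Phi_{s,G}(u)$ and assume $L<\infty$. By Lemma \ref{lema.reg} the mollifications $u_\ve=u*\rho_\ve$ satisfy $(1-s)\Phi_{s,G}(u_\ve)\le(1-s)\Phi_{s,G}(u)$, hence $\liminf_{s\uparrow 1}(1-s)\Phi_{s,G}(u_\ve)\le L$. Since each $u_\ve$ is smooth with bounded first and second derivatives, one repeats the Taylor analysis of the model case keeping only the contribution of $x\in B_R$ and $|h|<\delta$ — a region on which the error terms are now integrable — and then lets $\delta\downarrow 0$, $R\uparrow\infty$ to get $\liminf_{s\uparrow 1}(1-s)\Phi_{s,G}(u_\ve)\ge\Phi_{\tilde G}(\nabla u_\ve)$. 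Therefore $\Phi_{\tilde G}(\nabla u_\ve)\le L$ for all $\ve>0$, so $\{u_\ve\}$ is bounded in the reflexive space $W^{1,\tilde G}(\R^n)$; a weakly convergent subsequence $u_{\ve_j}\rightharpoonup v$ in $W^{1,\tilde G}(\R^n)$ must satisfy $v=u$ (since $u_\ve\to u$ in $L^1_{\text{loc}}(\R^n)$), so $u\in W^{1,\tilde G}(\R^n)$, and weak lower semicontinuity of the convex modular gives $\Phi_{\tilde G}(\nabla u)\le\liminf_j\Phi_{\tilde G}(\nabla u_{\ve_j})\le L$. Combined with the previous paragraph this proves \eqref{bbm}; the case $u\notin W^{1,\tilde G}(\R^n)$ is covered because then both sides are $+\infty$ (the last chain of inequalities shows $L<\infty$ would force $u\in W^{1,\tilde G}(\R^n)$).

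The main obstacle is the $\liminf$ inequality, and inside it the implication ``$\liminf_{s\uparrow 1}(1-s)\Phi_{s,G}(u)<\infty\Rightarrow u\in W^{1,\tilde G}(\R^n)$'': there is no a priori reason for $u$ to be weakly differentiable. This is resolved by Lemma \ref{lema.reg}, which transfers the bound to the smooth functions $u_\ve$ \emph{without enlarging the modular}, together with the localized Taylor expansion and the reflexivity of $W^{1,\tilde G}$. The non-compactness of $\supp u$ — which is also what forces the indirect density-plus-Lemma \ref{lema.triang} route in the $\limsup$ step instead of a direct reduction to $C_c^\infty$ — is the recurring technical nuisance throughout.
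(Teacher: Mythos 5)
Your proposal is correct, and its skeleton --- the Taylor/polar-coordinates computation for smooth compactly supported functions, the density argument via Lemma \ref{lema.triang} for the upper bound, and mollification plus reflexivity of $W^{1,\tilde G}(\R^n)$ for the lower bound --- is the same as the paper's. The genuine difference is in how the two halves are organized and, above all, in the converse implication ``$\liminf_{s\uparrow 1}(1-s)\Phi_{s,G}(u)<\infty\Rightarrow u\in W^{1,G}(\R^n)$''. The paper proves the full two-sided limit first for $C^2_c$ (via a pointwise limit lemma plus an explicit integrable majorant for $(1-s)F_s$ and dominated convergence), then for all of $W^{1,G}$ by a two-sided approximation estimate, and only then treats the converse by combining mollification \emph{and} truncation (Lemmas \ref{lema.reg} and \ref{lema.trunc}) so as to land back in $C^\infty_c$ and invoke the already-proved identity for $u_{k,\ve}=\rho_\ve*(u\eta_k)$. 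You instead split into a $\limsup$ and a $\liminf$ inequality from the outset and, for the $\liminf$, use mollification alone: since $u_\ve=u*\rho_\ve$ has globally bounded derivatives (this uses H\"older's inequality in Orlicz spaces, $\int_{B_\ve(x)}|u|\le 2\|u\|_G\|\chi_{B_\ve}\|_{G^*}$, a point worth stating explicitly), you re-run the Taylor estimate only over $x\in B_R$, $|h|<\delta$, where discarding the complement costs nothing for a lower bound, and let $R\uparrow\infty$. This buys you a proof that never needs Lemma \ref{lema.trunc} and is closer to the original Bourgain--Brezis--Mironescu scheme; what it costs is that the one-sided localized estimate for non-compactly-supported smooth functions must be carried out separately rather than quoted from the model case. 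Both routes are sound; yours also sidesteps the two-sided bookkeeping in the paper's display \eqref{desig00}--\eqref{desig1} by keeping the upper and lower estimates entirely separate.
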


The proof of Theorem \ref{main1} will be a consequence of the following couple of lemmas.

\begin{lema} \label{teo1}
Let $u\in W^{1,G}(\R^n)$. Then, for $0<s<1$ it holds that
$$
	\Phi_{s,G}(u) \leq \frac{n\omega_n}{1-s} \Phi_{G}(|\nabla u|) + \frac{2\C n\omega_n}{s} \Phi_{G}(u)
$$
where $\C$ is the doubling constant defined in \eqref{H2}.
\end{lema}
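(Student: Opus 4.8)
The plan is to split the double integral defining $\Phi_{s,G}(u)$ according to whether the two points are close or far apart: near the diagonal the gradient of $u$ controls the difference quotient, while far from the diagonal only $\Phi_G(u)$ enters, the kernel $|x-y|^{-n-s}$ being integrable there. Concretely, after the change of variables $h=y-x$ and an application of Tonelli's theorem (all integrands are nonnegative), write
\[
\Phi_{s,G}(u)=\left(\int_{|h|<1}+\int_{|h|\ge 1}\right)\left(\intr G\Big(\tfrac{|u(x+h)-u(x)|}{|h|^s}\Big)\,dx\right)\frac{dh}{|h|^n}=:J_1+J_2.
\]

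For $J_1$ I would use that $u\in W^{1,G}(\R^n)\subset W^{1,1}_{\mathrm{loc}}(\R^n)$ is absolutely continuous on almost every line, so that $|u(x+h)-u(x)|\le |h|\int_0^1|\nabla u(x+th)|\,dt$ for a.e. $x$ and $h$; hence $|u(x+h)-u(x)|/|h|^s\le |h|^{1-s}\int_0^1|\nabla u(x+th)|\,dt$. Since the measure $dt$ on $[0,1]$ has unit mass and $G$ is convex, Jensen's inequality gives $G\big(|h|^{1-s}\int_0^1|\nabla u(x+th)|\,dt\big)\le\int_0^1 G\big(|h|^{1-s}|\nabla u(x+th)|\big)\,dt$, and because $|h|^{1-s}<1$ when $|h|<1$ and $0<s<1$, property \eqref{P5} yields $G\big(|h|^{1-s}|\nabla u(x+th)|\big)\le |h|^{1-s}G(|\nabla u(x+th)|)$. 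Integrating in $x$ (translation invariance makes the resulting inner integral equal to $\Phi_G(|\nabla u|)$, uniformly in $t$) and then in $h$ over $\{|h|<1\}$ produces the factor $\int_{|h|<1}|h|^{1-s-n}\,dh=n\omega_n\int_0^1 r^{-s}\,dr=\tfrac{n\omega_n}{1-s}$, so $J_1\le\tfrac{n\omega_n}{1-s}\Phi_G(|\nabla u|)$.

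For $J_2$, since $|h|\ge 1$ we have $|h|^{-s}\le 1$, and from $|u(x+h)-u(x)|\le 2\max\{|u(x)|,|u(x+h)|\}$, the monotonicity of $G$, property \eqref{P5}, the $\Delta_2$ condition \eqref{H2}, and $G\ge 0$ one gets $G\big(|u(x+h)-u(x)|/|h|^s\big)\le |h|^{-s}\C\big(G(|u(x)|)+G(|u(x+h)|)\big)$. Integrating in $x$ gives $2\C\,|h|^{-s}\Phi_G(u)$ by translation invariance, and $\int_{|h|\ge1}|h|^{-s-n}\,dh=n\omega_n\int_1^\infty r^{-s-1}\,dr=\tfrac{n\omega_n}{s}$, whence $J_2\le\tfrac{2\C n\omega_n}{s}\Phi_G(u)$. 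Adding the estimates for $J_1$ and $J_2$ gives the claim. The only point requiring care is the pointwise bound used in $J_1$, namely the control of the difference quotient of a $W^{1,G}$ function by a segment-average of its gradient; if one prefers to sidestep the absolutely-continuous-on-lines characterization, one proves the estimate first for $u\in C_c^\infty(\R^n)$ and then passes to the limit using the density of $C_c^\infty(\R^n)$ in $W^{1,G}(\R^n)$ (Proposition~\ref{propiedades}) together with Fatou's lemma, noting that $\Phi_{s,G}$ is lower semicontinuous along a.e.-convergent sequences while $\Phi_G(u)$ and $\Phi_G(|\nabla u|)$ are continuous under $W^{1,G}$-convergence thanks to the $\Delta_2$ condition.
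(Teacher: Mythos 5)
Your proof is correct and follows essentially the same route as the paper's: the identical split of the double integral at $|h|=1$, the fundamental theorem of calculus plus Jensen's inequality and property \eqref{P5} for the near-diagonal part, and \eqref{P5} with the $\Delta_2$/subadditivity estimate for the far part, with the general case recovered by density and Fatou's lemma exactly as in the paper. The only (harmless) variation is your mention of the absolute-continuity-on-lines argument as an alternative to first proving the estimate for smooth compactly supported functions.
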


\begin{proof}
Let us first assume that $u\in C^2_c(\R^n)$.

We split the integral
\begin{align*}
 \iint_{\R^n\times\R^n} &G\left( \frac{|u(x)-u(y)|}{|x-y|^s} \right) \frac{dx\,dy}{|x-y|^n}\\ &=
 \left(\int_{B_1}\intr + \int_{B_1^c} \intr \right)  G\left( \frac{|u(x+h)-u(x)|}{|h|^s} \right) \frac{dx\,dh}{|h|^n} \\
 &:=I_1+I_2.
\end{align*}

Let us bound $I_1$. Given $u\in C^2_c(\R^n)$, observe that for any fixed $x\in \R^n$ and $h\in\R^n$ we can write
$$
 u(x+h)-u(x) = \int_0^1 \frac{d}{dt} u(x+th) \,dt = \int_0^1 \nabla u(x+th)\cdot h \,dt.
$$
Dividing by $|h|^s$ and using the monotonicity and   convexity of $G$ we get
\begin{align} \label{ec.lema.1}
\begin{split}
 G\left( \frac{|u(x+h)-u(x)|}{|h|^s} \right) 
 &\leq 
 G \left(  \int_0^1 |\nabla u(x+th)| |h|^{1-s} \,dt \right)\\
 &\leq
    \int_0^1 G \left(|\nabla u(x+th)| |h|^{1-s} \right)dt.  
\end{split}    
\end{align}
Expression \eqref{ec.lema.1} together with \eqref{P5} allow us to bound $I_1$ as follows
\begin{align*}
I_1&\leq \int_{B_1 }\intr \int_0^1 G \left(|\nabla u(x+th)| |h|^{1-s} \right)dt\, dx \,\frac{dh}{|h|^n}\\
&\leq \int_{B_1 } |h|^{ 1-s-n}\intr \int_0^1 G \left(|\nabla u(x+th)|\right)dt\, dx \,dh\\
&\leq \int_{B_1 } |h|^{1-s-n} \,dh \intr  G \left(|\nabla u(x)|\right)  dx \\
&= n\omega_n \int_0^1 r^{-s} \,dr \intr  G \left(|\nabla u(x)|\right)  dx \\
&= \frac{n\omega_n}{1-s} \intr  G \left(|\nabla u(x)|\right)  dx .
\end{align*}

The integral $I_2$ can be bounded using \eqref{P5} (since $|h|^{-s}<1$ when $h\in B_1^c$). Indeed, 
\begin{align*}
 I_2&\leq \int_{B_1^c}h^{-s-n} \intr G(|u(x+h)-u(x)|) \,dx \,dh\\
&\leq \C \int_{B_1^c}h^{-s-n} \intr G(|u(x+h)|) + G(|u(x)|) \,dx \,dh\\ 
&= 2\C \intr G(|u(x)|)\,dx \ n\omega_n \int_1^\infty r^{-s-1} \,dr \\
&= \frac{2\C n\omega_n}{s} \intr G(|u(x)|)\,dx, 
\end{align*}
where we have used the property \eqref{P4}.

In order to prove the Lemma for any $u\in W^{1,G}(\R^n)$, we take a sequence $\{u_k\}_{k\in\N}\subset C^2_c(\R^n)$ such that $u_k\to u$ in $W^{1,G}(\R^n)$. Without loss of generality, we may assume that $u_k\to u$ a.e. in $\R^n$. Observe that this implies that
$$
G\left(\frac{|u_k(x)-u_k(y)|}{|x-y|^s}\right)\to G\left(\frac{|u(x)-u(y)|}{|x-y|^s}\right) \quad \text{a.e. in } \R^n\times\R^n.
$$

Therefore, by Proposition \ref{propiedades} and Fatou's Lemma, we obtain that
\begin{align*}
\Phi_{s,G}(u)&\le \liminf_{k\to\infty} \Phi_{s,G}(u_k) \le \lim_{k\to\infty}\left[\frac{n\omega_n}{1-s} \Phi_{G}(|\nabla u_k|) + \frac{2\C n\omega_n}{s} \Phi_{G}(u_k)\right]\\
&= \frac{n\omega_n }{1-s} \Phi_{G}(|\nabla u|) + \frac{2\C n\omega_n}{s} \Phi_{G}(u).
\end{align*} 
The proof is now complete.
\end{proof}

 The following lemma is key in the proof of the main result.
\begin{lema}
Let $G$ be an Orlicz function such that the limit in \eqref{phitilde} exists. Let $u\in C^2_c(\R^n)$. Then, for every fixed $x\in \R^n$ we have that
\begin{equation} \label{eq.bbm}
\lim_{s\uparrow 1} (1-s) \intr G\left( \frac{|u(x)-u(y)|}{|x-y|^s} \right) \frac{ dy}{|x-y|^n} =  \tilde G(|\nabla u(x)|) 
\end{equation}
where $\tilde G$ is defined in \eqref{phitilde}.
\end{lema}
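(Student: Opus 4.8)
The plan is to reduce \eqref{eq.bbm} to a one-dimensional asymptotic analysis by passing to polar coordinates centered at $x$, and then to treat separately the region close to the origin --- where the $C^2$ regularity of $u$ linearizes the increment --- and the complementary region, which will turn out to be asymptotically negligible. Writing $y=x+rw$ with $r>0$ and $w\in\Sn$, so that $\frac{dy}{|x-y|^n}=\frac{dS_w\,dr}{r}$, the integral in \eqref{eq.bbm} becomes
$$
(1-s)\int_0^\infty\int_{\Sn} G\left(\frac{|u(x)-u(x+rw)|}{r^s}\right)\frac{dS_w\,dr}{r}.
$$
On $\{r\ge1\}$ one has $|u(x)-u(x+rw)|\le 2\|u\|_\infty$ and $r^{-s}<1$, so \eqref{P5} gives $G\big(2\|u\|_\infty r^{-s}\big)\le r^{-s}G(2\|u\|_\infty)$, and this part is dominated by $(1-s)\,n\omega_n\,s^{-1}G(2\|u\|_\infty)$, which vanishes as $s\uparrow1$. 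Hence only the contribution of $\{0<r<1\}$ matters.

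On $\{0<r<1\}$ I would invoke Taylor's theorem for $u\in C^2_c(\R^n)$ to get, with $C:=\tfrac12\|D^2u\|_\infty$,
$$
r^{1-s}|\nabla u(x)\cdot w|-Cr^{2-s}\ \le\ \frac{|u(x)-u(x+rw)|}{r^s}\ \le\ r^{1-s}|\nabla u(x)\cdot w|+Cr^{2-s}.
$$
Fixing $\delta>0$ and applying Lemma~\ref{lema.triang} with the remainder $Cr^{2-s}$ in the role of ``$a$'', monotonicity of $G$ then yields the two-sided bound
$$
G\left(\frac{|u(x)-u(x+rw)|}{r^s}\right)\le C_\delta\,G(Cr^{2-s})+(1+\delta)^p\,G\big(r^{1-s}|\nabla u(x)\cdot w|\big)
$$
together with the companion estimate $G\big(r^{1-s}|\nabla u(x)\cdot w|\big)\le C_\delta\,G(Cr^{2-s})+(1+\delta)^p\,G\big(\tfrac{|u(x)-u(x+rw)|}{r^s}\big)$, where $p>1$ is the exponent from \eqref{H4.1}. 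The remainder integral is harmless uniformly in $s$: since $r^{2-s}\le r$ on $(0,1)$, monotonicity, Lemma~\ref{lema.lieber} and \eqref{P5} give $G(Cr^{2-s})\le\max\{1,C^p\}\,r$, whence $(1-s)\int_0^1 G(Cr^{2-s})\,\frac{dr}{r}\le\max\{1,C^p\}(1-s)\to0$.

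Finally I would identify the surviving term with $\tilde G$. By rotational invariance of the surface measure on $\Sn$ --- take a rotation mapping $\nabla u(x)/|\nabla u(x)|$ onto $e_n$, the case $\nabla u(x)=0$ being trivial --- one has $\int_{\Sn} G\big(r^{1-s}|\nabla u(x)\cdot w|\big)\,dS_w=\int_{\Sn} G\big(|\nabla u(x)|\,|z_n|\,r^{1-s}\big)\,dS_z$, so that, since the integral defining $\tilde G$ in \eqref{phitilde} already runs over $r\in(0,1)$,
$$
(1-s)\int_0^1\int_{\Sn} G\big(r^{1-s}|\nabla u(x)\cdot w|\big)\,\frac{dS_w\,dr}{r}\ \longrightarrow\ \tilde G(|\nabla u(x)|)\qquad\text{as }s\uparrow1.
$$
Integrating the first two-sided inequality over $\{0<r<1\}$ and passing to $\limsup_{s\uparrow1}$ gives an upper bound $(1+\delta)^p\tilde G(|\nabla u(x)|)$; integrating the companion inequality and passing to $\liminf_{s\uparrow1}$ gives $\tilde G(|\nabla u(x)|)\le(1+\delta)^p\liminf_{s\uparrow1}(\cdots)$; letting $\delta\downarrow0$ in both, and recalling that the tail $\{r\ge1\}$ contributes $0$, yields \eqref{eq.bbm}. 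I expect the only genuinely delicate point to be precisely the control of the Taylor remainder in the limit --- that is, showing $(1-s)\int_0^1 G(Cr^{2-s})\,\frac{dr}{r}\to0$ for a general Orlicz function $G$ and combining it with the quasi-triangle inequality of Lemma~\ref{lema.triang} --- everything else being the standard Bourgain--Brezis--Mironescu scheme.
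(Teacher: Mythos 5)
Your proof is correct and follows the same overall scheme as the paper's: split at $|x-y|=1$, discard the tail using \eqref{P5}, linearize $u$ near $x$ by Taylor's theorem, and identify the surviving term with $\tilde G(|\nabla u(x)|)$ via polar coordinates and a rotation sending $\nabla u(x)$ to $|\nabla u(x)|e_n$. The one place where you diverge is the control of the linearization error: the paper simply invokes the local Lipschitz continuity of $G$ (property \eqref{P1}) to get $\bigl|G\bigl(\tfrac{|u(x)-u(y)|}{|x-y|^s}\bigr)-G\bigl(|\nabla u(x)\cdot\tfrac{x-y}{|x-y|^s}|\bigr)\bigr|\le C|x-y|^{2-s}$, whose integral over $|x-y|<1$ is $O(1)$ and hence killed by the factor $(1-s)$; you instead feed the Taylor remainder $Cr^{2-s}$ into the quasi-triangle inequality of Lemma~\ref{lema.triang}, verify $(1-s)\int_0^1 G(Cr^{2-s})\,\tfrac{dr}{r}\to0$ (your estimate via Lemma~\ref{lema.lieber} and \eqref{P5} is fine), and then remove the factor $(1+\delta)^p$ by letting $\delta\downarrow0$ after the $\limsup/\liminf$ sandwich. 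Both routes work; the paper's is shorter and avoids the extra $\delta$-limit, while yours never uses the Lipschitz constant of $G$ on $[0,\|\nabla u\|_\infty]$ --- though since convexity already makes $G$ locally Lipschitz, this buys no extra generality here.
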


\begin{proof}
For each fixed $x\in \R^n$ we split the integral 
\begin{align*}
\intr G\left(\frac{|u(x)-u(y)|}{|x-y|^s} \right) \frac{dy}{|x-y|^n}= &\int_{|x-y|<1}  G\left(\frac{|u(x)-u(y)|}{|x-y|^s} \right) \frac{dy}{|x-y|^n} \\
& +\int_{|x-y|\geq 1} G\left(\frac{|u(x)-u(y)|}{|x-y|^s} \right) \frac{dy}{|x-y|^n} \\
= & I_1 + I_2.
\end{align*}
Observe that for any $x,y\in \R^n$, $x\neq y$, property \eqref{P1} gives that
\begin{align*}
\left| G\left( \frac{|u(x)-u(y)|}{|x-y|^s} \right) - G\left( \left| \nabla u(x) \cdot \frac{x-y}{|x-y|^s}\right|  \right) \right|&\leq
L \frac{|u(x)-u(y)-\nabla u(x)\cdot (x-y)|}{|x-y|^s}\\
&\leq C |x-y|^{2-s},
\end{align*}
where $L$ is the Lipschitz constant of $G$ on the interval $[0,\|\nabla u\|_\infty]$ and $C$ depends on the $C^2-$norm of $u$.

Now, since
$$
\int_{|x-y|<1} |x-y|^{2-s-n}\, dy = \frac{n\omega_n}{2-s},
$$
it follows that
\begin{align*}
\lim_{s\uparrow 1} (1-s)\int_{|x-y|<1} &G\left( \frac{|u(x)-u(y)|}{|x-y|^s} \right) \frac{ dy}{|x-y|^n}\\
& = \lim_{s\uparrow 1} (1-s) \int_{|x-y|<1} G\left( \left| \nabla u(x) \cdot \frac{x-y}{|x-y|^s}\right|  \right)\frac{ dy}{|x-y|^n}.
\end{align*}

But
\begin{align*} 
\int_{|x-y|<1}  G\left( \left| \nabla u(x) \cdot \frac{x-y}{|x-y|^s}\right|\right) \frac{dy}{|x-y|^n} &=\int_0^1  \int_{\Sn}   G\left( \left| \nabla u(x) \cdot w\right| r^{1-s}\right)   dS_w \, \frac{dr}{r}\\
&=\int_0^1  \int_{\Sn}   G\left( \left| \nabla u(x) \right| |w_n| r^{1-s}\right)   dS_w \, \frac{dr}{r}.
\end{align*}
where we have performed a rotation such that $\nabla u(x) = |\nabla u(x)| e_n$.

Therefore
\begin{equation} \label{des.I}
\lim_{s\uparrow 1} (1-s)I_1 = \tilde G (|\nabla u(x)|).
\end{equation}

Let us deal with $I_2$. Since $G$ is increasing and \eqref{P5} holds, $I_2$ is bounded. In fact, 
\begin{align*} 
I_2 &\leq  \int_{|x-y|\geq 1}  \frac{G (2\|u\|_\infty)}{|x-y|^{n+s}} \, dy
=G (2\|u\|_\infty) n\omega_n\int_1^\infty  \frac{1}{r^{1+s}}\,dr =\frac{n\omega_n}{s}G (2\|u\|_\infty),
\end{align*}
from where we can derive that
\begin{equation} \label{des.II}
\lim_{s\uparrow 1} (1-s)I_2 = 0.
\end{equation}

Finally, from \eqref{des.I} and \eqref{des.II} we obtain \eqref{eq.bbm}.
\end{proof}

At this point we are ready to prove our main result.

\begin{proof} [Proof of Theorem \ref{main1}]
Given $u\in C_c^2(\R^n)$ with $\supp(u)\subset B_R(0)$, in view of Theorem \ref{lema.bbm} it only remains to show the existence of an integrable majorant for $(1-s)F_s$, where $F_s$ is given by
$$
	F_s(x):=\intr G\left( \frac{|u(x)-u(y)|}{|x-y|^s} \right) \frac{ dy}{|x-y|^n}.
$$
Without loss of generality we can assume that $R>1$.

First, we analyze the behavior of $F_s(x)$ for small values of $x$. When $|x|<2R$ we can write
\begin{align}
	|F_s(x)|&=\left( \int_{B_1(x)} + \int_{\R^n\setminus B_1(x)} \right) G\left( \frac{|u(x)-u(y)|}{|x-y|^s} \right) \frac{ dy}{|x-y|^n} := I_1 + I_2.
\end{align}
Arguing as in the proof of Lemma \ref{teo1} we obtain that
\begin{align} \label{dest1}
\begin{split}
I_1&\leq \int_{|h|<1} |h|^{1-s-n} \int_0^1 G(|\nabla u(x+th)|) \,dt \, dh\\
&\leq n\omega_n G(\|\nabla u\|_\infty) \int_0^1 r^{-s}\,dr\\
&=  G(\|\nabla u\|_\infty) \frac{n\omega_n}{1-s},
\end{split}
\end{align}
and
\begin{align} \label{dest2}
\begin{split}
I_2&\leq  \int_{|h|\geq 1} |h|^{-s-n} (G(|u(x+h)|+|u(x)|)) \, dh\\
&\leq n\omega_n \int_1^\infty r^{-s-1} G(2\|u\|_\infty)\, dr\\
&= \frac{n\omega_n }{s}G(2\|u\|_\infty).
\end{split}
\end{align}

When $|x|\geq 2R$ the function $u$ vanishes and we have that
$$
	F_s(x)= \int_{B_R(0)} G\left( \frac{|u(y)|}{|x-y|^s} \right) \frac{ dy}{|x-y|^n}.
$$
Since $|x-y| \geq |x|-R\geq \frac12|x|$, from the monotonicity of $G$, $\Delta_2$ condition and \eqref{P5} (since $|x|\geq 2$) we get 
\begin{align} \label{dest3}
\begin{split}
	|F_s(x)|&\leq \frac{2^n}{|x|^n}\int_{B_R(0)} G\left( \frac{2^s|u(y)|}{|x|^s} \right) \,dy \leq 
	\frac{\C}{|x|^n}\int_{B_R(0)} G\left( \frac{|u(y)|}{|x|^s} \right) \,dy\\
	&\leq 
	\frac{\C}{|x|^{n+s}}\int_{B_R(0)} G\left( |u(y)| \right) \,dy	
	\leq 
	\frac{\C}{|x|^{n+\frac12 }}\int_{B_R(0)} G\left( |u(y)| \right) \,dy,
\end{split}	
\end{align}
for any $s\ge \frac12$.

From \eqref{dest1}, \eqref{dest2} and \eqref{dest3} we obtain that
$$
(1-s)|F_s(x)|\leq   C\left(\chi_{B_R(0)}(x) + \frac{1}{|x|^{n+\frac12}} \chi_{B_R(0)^c}(x) \right) \in L^1(\R^n)
$$
  where $  C>0$ depends on $n$, $p$ and $u$ but it is independent of $s$.

Then, from Theorem \ref{lema.bbm} and the Dominated Convergence Theorem the result follows for any $u\in C_c^2(\R^n)$.

Let us extend the result for any $u\in W^{1,G}(\R^n)$. According to Proposition \ref{propiedades}, let $\{u_k\}_{k\in\N}\subset C_c^2(\R^n)$ be a sequence such that $u_k\to u$ in $W^{1,G}(\R^n)$. Then
\begin{align} \label{desig00}
\begin{split}
\left| (1-s)\Phi_{s,G}(u) - \Phi_{\tilde G}(\nabla u) \right| &\leq  (1-s) \left| \Phi_{s,G}(u) - \Phi_{s,G}(u_k) \right| \\
&+ \left|(1-s) \Phi_{s,G}(u_k) - \Phi_{\tilde G} (|\nabla u_k|) \right|\\
& + \left|\Phi_{\tilde G}(|\nabla u_k|) - \Phi_{\tilde G}(|\nabla u|)\right|.
\end{split}
\end{align}
Let us fix $\ve>0$. From Proposition \ref{propiedades}, there exists $k_0$ such that for $k\geq k_0$, 
$$
	|\Phi_{\tilde G}(|\nabla u_k|)-\Phi_{\tilde G}(|\nabla u|)|\leq \frac{\ve}{2},
$$
and from Lemma \ref{lema.triang} one can take $\delta>0$ (to be fixed) such that
\begin{equation} \label{desig1}
(1-s) | \Phi_{s,G}(u) -  \Phi_{s,G}(u) | \leq (1-s)\delta  \Phi_{s,G}(u_k)+ (1-s)C_\delta \Phi_{s,G}(u-u_k).
\end{equation}
Observe that from Lemma \ref{teo1} we have that $(1-s)  \Phi_{s,G}(u_k) \leq K$ for some positive constant $K$. Moreover, again from Lemma \ref{teo1}, there is some $k_1$ such that for $k\geq k_1$ it holds that $
(1-s)\Phi_{s,G}(u-u_k) \leq \frac{\ve}{4C_\delta}$. Consequently, it follows that \eqref{desig1} can be bounded as
$$
(1-s) | \Phi_{s,G}(u) -  \Phi_{s,G}(u) |   \leq \delta K + \frac{\ve}{4}
$$
for $k\geq k_1$. Hence, choosing $\delta=\frac{\ve}{4K}$ we find that \eqref{desig00} is upper bounded as
$$
\left| (1-s)\Phi_{s,G}(u) - \Phi_{\tilde G}(\nabla u) \right| \leq \ve + \left|(1-s) \Phi_{s,G}(u_k) - \Phi_{\tilde G} (|\nabla u_k|) \right|
$$
for all $k\geq \max\{k_0,k_1\}$. Finally, the desired result follows by fixing a value of $k\geq \max\{k_0,k_1\}$ and taking limit as $s\uparrow 1$.

To finish the proof, let us see that if $u\in L^G(\R^n)$ is such that
$$
\liminf_{s\uparrow 1} (1-s) \Phi_{s,G}(u)<\infty,
$$
then $u\in W^{1,G}(\R^n)$.

Given $u\in L^G(\R^n)$, according to Lemmas \ref{lema.reg} and \ref{lema.trunc}, if we define the approximating family 
$$
	u_{k,\ve}=\rho_\ve * (u\eta_k) \in C_c^\infty(\R^n),
$$
it satisfies
$$
\liminf_{s\uparrow 1} (1-s) \Phi_{s,G}(u_{k,\ve})<C,
$$
with $C$ independent on $\ve>0$ and $k\in\N$. 

The first part of this theorem gives that
$$
\Phi_{\tilde G}(\nabla u_{k,\ve}) =\lim_{s\uparrow 1} (1-s) \Phi_{s,G}(u_{k,\ve})<C,
$$
then, from Proposition \ref{prop.equiv}, $\{u_{k,\ve}\}_{k\in\N, \ve>0}$ is bounded in $W^{1,G}(\R^n)$. Consequently, from Proposition \ref{propiedades}, there exists a sequence $u_j=u_{k_j,\ve_j}$ with $k_j\to\infty$ and $\ve_k\downarrow 0$ and $\tilde u\in W^{1,G}(\R^n)$ such that $u_j\cd \tilde u$ weakly in $W^{1,G}(\R^n)$.  Moreover, since $u_{k,\ve} \to u$ in $L^G(\R^n)$ as $k\to\infty$, and $\ve\downarrow 0$, we can conclude that $\tilde u= u\in W^{1,G}(\R^n)$ as required.
\end{proof}

\section{The case of a sequence}
Our second main result in this paper is the following limit result for sequences of functions.

\begin{thm}\label{main2}
Let $G$ be an Orlicz function. Let $0\le s_k\  \uparrow 1$ and $\{u_k\}_{k\in\N}\subset L^G(\R^n)$ be such that
$$
	\sup_{k\in\N} (1-s_k)\Phi_{s_k,G}(u_k) <\infty \quad \text{ and }\quad  \sup_{k\in\N} \Phi_G(u_k) <\infty.
$$
Then there exists $u\in L^G(\R^n)$ and a subsequence $\{u_{k_j}\}_{j\in\N}\subset \{u_k\}_{k\in\N}$ such that $u_{k_j}\to u$ in $L^G_{loc}(\R^n)$. Moreover, if $G$ is such that the limit in \eqref{phitilde} exists, then $u\in W^{1,G}(\R^n)$ and the following estimate holds
$$
\Phi_{\tilde G}(\nabla u) \leq \liminf_{k\to\infty} (1-s_k) \Phi_{s_k,G}(u_k).
$$
\end{thm}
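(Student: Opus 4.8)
The plan is to establish the two assertions in turn: first the compactness part (existence of $u\in L^G(\R^n)$ and of a subsequence $u_{k_j}\to u$ in $L^G_{loc}(\R^n)$), then, assuming $\tilde G$ exists, the regularity $u\in W^{1,G}(\R^n)$ together with the lower‑semicontinuity estimate. Since $s_k\uparrow1$, for any fixed $s_0<1$ only finitely many indices have $s_k<s_0$, so after discarding them we may assume throughout that $s_k$ is as close to $1$ as needed; this changes neither the existence of a convergent subsequence nor $\liminf_k(1-s_k)\Phi_{s_k,G}(u_k)$. For the compactness the difficulty is that Lemma~\ref{lema.comp} only gives $\Phi_G(\tau_hu_k-u_k)\le C|h|^{s_k}\Phi_{s_k,G}(u_k)\le CM|h|^{s_k}/(1-s_k)$, which blows up as $s_k\uparrow1$, so the Fréchet--Kolmogorov criterion is not directly applicable. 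Following Bourgain--Brezis--Mironescu \cite{BBM} and Ponce \cite{Ponce04}, I would convolve with a mollifier adapted to $s_k$: fix $\varphi\in C^\infty_c(B_1)$ with $\varphi\equiv1$ near $0$, let $p>1$ be the exponent in \eqref{H4.1}, and set $\tilde\rho_k(z):=c_k^{-1}\varphi(z)|z|^{-n+(1-s_k)p}$ with $c_k>0$ chosen so that $\intr\tilde\rho_k=1$. Then $\tilde\rho_k$ is a probability density supported in $B_1$, concentrating at the origin as $s_k\uparrow1$ (in particular $\int_{|z|\ge\sigma}\tilde\rho_k\to0$ for each fixed $\sigma>0$), with $\tilde\rho_k(z)\le C(1-s_k)|z|^{-n}$ and $|\nabla\tilde\rho_k(z)|\le C(1-s_k)|z|^{-n-1+(1-s_k)p}$ on $B_1$. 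Put $v_k:=u_k*\tilde\rho_k$.

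The two facts needed are the following. First, $\{v_k\}$ is bounded in $W^{1,G}(\R^n)$: by Jensen $\Phi_G(v_k)\le\Phi_G(u_k)\le M$, while writing $\nabla v_k(x)=\intr(u_k(x-z)-u_k(x))\nabla\tilde\rho_k(z)\,dz$ (a short computation, using $u_k\in W^{s_k,G}$, shows this integral represents the distributional gradient of $v_k$), factoring $u_k(x-z)-u_k(x)=|z|^{s_k}\cdot\frac{u_k(x-z)-u_k(x)}{|z|^{s_k}}$, applying Jensen against the probability measure proportional to $|z|^{s_k}|\nabla\tilde\rho_k(z)|\,dz$ (whose mass is finite and bounded uniformly in $k$ since $p>1$) together with Lemma~\ref{lema.lieber}, and using $|z|^{s_k}|\nabla\tilde\rho_k(z)|\le C(1-s_k)|z|^{-n}$ on $B_1$ and Fubini, one gets $\Phi_G(\nabla v_k)\le C(1-s_k)\Phi_{s_k,G}(u_k)\le CM$. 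Hence, by Lemma~\ref{teo1} at a fixed $s_*\in(0,1)$, $\{v_k\}$ is bounded in $W^{s_*,G}(\R^n)$, so Theorem~\ref{teo.comp} provides a subsequence converging in $L^G_{loc}(\R^n)$. Second, $\|u_k-v_k\|_{L^G(\R^n)}\to0$: from $u_k-v_k=\intr\tilde\rho_k(z)(u_k(\cdot)-u_k(\cdot-z))\,dz$ and Jensen, $\Phi_G(u_k-v_k)\le\intr\tilde\rho_k(z)\big(\intr G(|u_k(x)-u_k(x-z)|)\,dx\big)\,dz$; splitting $|z|<\sigma$ and $\sigma\le|z|<1$, the first region is controlled by \eqref{P5} and $\tilde\rho_k(z)|z|^{s_k}\le C\sigma(1-s_k)|z|^{-n}$ there, contributing at most $C\sigma(1-s_k)\Phi_{s_k,G}(u_k)\le C\sigma M$, and the second region by \eqref{P4} and $\Phi_G(u_k)\le M$, contributing at most $\C M\int_{|z|\ge\sigma}\tilde\rho_k$, which tends to $0$ as $k\to\infty$ for each fixed $\sigma$; letting $\sigma\downarrow0$ gives $\Phi_G(u_k-v_k)\to0$, hence norm convergence (the modular controls the norm in the $\Delta_2$ case). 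Combining the two facts, a subsequence of $\{u_k\}$ converges in $L^G_{loc}(\R^n)$ to some $u$; passing to a further subsequence so that $u_{k_j}\to u$ a.e., Fatou and $\Phi_G(u_k)\le M$ give $u\in L^G(\R^n)$.

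For the regularity and the estimate, assume $\tilde G$ in \eqref{phitilde} exists and (otherwise there is nothing to prove) $L:=\liminf_k(1-s_k)\Phi_{s_k,G}(u_k)<\infty$; extract a subsequence along which $(1-s_k)\Phi_{s_k,G}(u_k)\to L$ and, by the compactness step applied to it, $u_k\to u$ in $L^G_{loc}(\R^n)$ and a.e. Fix $\delta>0$ and let $w_k:=u_k*\rho_\delta$ with $\rho_\delta$ the \emph{fixed‑scale} standard mollifier. By Lemma~\ref{lema.reg}, $(1-s_k)\Phi_{s_k,G}(w_k)\le(1-s_k)\Phi_{s_k,G}(u_k)\to L$; moreover $\{w_k\}$ is bounded in $C^2(\R^n)\cap L^\infty(\R^n)$ (Hölder's inequality in $L^G$ and $\sup_k\|u_k\|_{L^G}<\infty$) and $w_k\to u*\rho_\delta$ in $C^2_{loc}(\R^n)$. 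Inspecting the proof of the lemma that yields \eqref{eq.bbm}, its error terms depend only on the $C^2$ and $L^\infty$ norms of the function, so that pointwise limit is stable under the present approximation: for every $x\in\R^n$,
$$
\lim_{k\to\infty}(1-s_k)\intr G\Big(\frac{|w_k(x)-w_k(y)|}{|x-y|^{s_k}}\Big)\frac{dy}{|x-y|^n}=\tilde G(|\nabla(u*\rho_\delta)(x)|).
$$
Restricting the outer integration to $B_R$ and using Fatou's lemma (only the $\liminf$ direction, so no majorant is required),
\begin{align*}
\int_{B_R}\tilde G(|\nabla(u*\rho_\delta)|)\,dx
&\le\liminf_{k\to\infty}(1-s_k)\int_{B_R}\intr G\Big(\frac{|w_k(x)-w_k(y)|}{|x-y|^{s_k}}\Big)\frac{dy\,dx}{|x-y|^n}\\
&\le\liminf_{k\to\infty}(1-s_k)\Phi_{s_k,G}(w_k)\le L.
\end{align*}
Letting $R\to\infty$ gives $\Phi_{\tilde G}(\nabla(u*\rho_\delta))\le L$ for all $\delta>0$. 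Since $u*\rho_\delta\to u$ in $L^1_{loc}(\R^n)$ as $\delta\downarrow0$, the lower semicontinuity of the convex functional $v\mapsto\Phi_{\tilde G}(\nabla v)$ under $L^1_{loc}$ convergence yields $u\in W^{1,1}_{loc}(\R^n)$ with $\Phi_{\tilde G}(\nabla u)\le L$; together with $u\in L^G(\R^n)$ and the equivalence $L^{\tilde G}=L^G$ from Proposition~\ref{prop.equiv}, this gives $u\in W^{1,G}(\R^n)$ and $\Phi_{\tilde G}(\nabla u)\le\liminf_k(1-s_k)\Phi_{s_k,G}(u_k)$.

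The heart of the argument is the compactness step, and inside it the construction of the $s_k$‑adapted mollifier $\tilde\rho_k$ whose gradient is scaled exactly to match the $(1-s_k)$‑normalized Gagliardo energy: this is what allows one to transfer the uniform bound on $(1-s_k)\Phi_{s_k,G}(u_k)$ into a uniform $W^{1,G}$ bound on $v_k=u_k*\tilde\rho_k$, and it is precisely where the $\Delta_2$/convexity hypotheses enter essentially, through Lemma~\ref{lema.lieber}. By contrast the lower‑semicontinuity step is a rather routine Fatou‑plus‑mollification argument, once the pointwise Bourgain--Brezis--Mironescu limit \eqref{eq.bbm} is available with error control uniform over $C^2$‑bounded families.
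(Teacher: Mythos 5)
Your argument is correct in substance, but it follows a genuinely different route from the paper. The paper's proof is built entirely on Theorem~\ref{teo.s1.s2}: for any fixed $t<1$ that comparison estimate turns the hypotheses directly into a uniform bound on $\Phi_{t,G}(u_k)$, so Theorem~\ref{teo.comp} gives the convergent subsequence at once; then Fatou's lemma in the $t$-modular, the same comparison estimate, and finally Theorem~\ref{main1} applied as $t\uparrow 1$ yield $\Phi_{\tilde G}(\nabla u)\le \liminf_k (1-s_k)\Phi_{s_k,G}(u_k)$. You never invoke Theorem~\ref{teo.s1.s2}; instead you rebuild the compactness by hand with the $s_k$-adapted mollifier $\tilde\rho_k$ (the Bourgain--Brezis--Mironescu/Ponce device), proving the auxiliary estimate $\Phi_G(\nabla(u_k*\tilde\rho_k))\le C(1-s_k)\Phi_{s_k,G}(u_k)$ together with $\Phi_G(u_k-u_k*\tilde\rho_k)\to 0$, and you rebuild the lower bound from the pointwise limit \eqref{eq.bbm} applied to fixed-scale mollifications $u_k*\rho_\delta$ plus Fatou. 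The paper's route is much shorter once Theorem~\ref{teo.s1.s2} is available; yours is closer to the original BBM strategy, is independent of the monotonicity-in-$s$ machinery, and the gradient estimate for $u_k*\tilde\rho_k$ is a genuinely useful byproduct not recorded in the paper.

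Two technical points deserve more care than you give them. First, $\nabla\tilde\rho_k$ is \emph{not} locally integrable near the origin (its singularity is of order $|z|^{-n-1+(1-s_k)p}$), so the identity $\nabla v_k(x)=\intr (u_k(x-z)-u_k(x))\nabla\tilde\rho_k(z)\,dz$ and the cancellation $\intr\nabla\tilde\rho_k=0$ must be justified by truncating away from the origin and passing to the limit using the very bound $\intr |z|^{s_k}|\nabla\tilde\rho_k(z)|\,dz\le C$ that you establish; this is routine but should be said. Second, in the last step, lower semicontinuity of $v\mapsto\Phi_{\tilde G}(\nabla v)$ under mere $L^1_{loc}$ convergence only yields $u\in BV$ with a bound on the relaxed functional when $\tilde G$ has linear growth at infinity, which the hypotheses \eqref{H1}--\eqref{H3} do not exclude; to land in $W^{1,G}(\R^n)$ you should instead observe that $\{u*\rho_\delta\}_{\delta}$ (or your sequence $\{v_k\}$) is bounded in the reflexive space $W^{1,G}(\R^n)$ (Propositions~\ref{propiedades} and~\ref{prop.equiv}), extract a weakly convergent subsequence whose limit must be $u$, and use weak lower semicontinuity of the convex modular $\Phi_{\tilde G}$ — exactly as the paper does at the end of the proof of Theorem~\ref{main1}. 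With these repairs your proof is complete.
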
 

The proof of the above result will be a direct consequence of the following useful estimate:

\begin{thm} \label{teo.s1.s2}
Let $0<s_1<s_2<1$ and $u \in L^G(\R^n)$. Then
$$
(1-s_1)\Phi_{s_1,G}(u) \leq 2^{1-s_1} (1-s_2) \Phi_{s_2,G}(u) + \frac{2\C n\omega_n(1-s_1)}{s_1} \Phi_G(u).
$$
\end{thm}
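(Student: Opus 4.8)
The plan is to estimate $\Phi_{s_1,G}(u)$ by splitting the double integral into a near-diagonal part, to be compared with $\Phi_{s_2,G}(u)$, and a far part, to be compared with $\Phi_G(u)$. After the change of variables $h=x-y$ and using that Lebesgue measure is translation invariant in $x$, write
$$
\Phi_{s_1,G}(u)=\int_{\R^n}\int_{\R^n}G\left(\frac{|u(x+h)-u(x)|}{|h|^{s_1}}\right)\frac{dh\,dx}{|h|^n}=I_1+I_2,
$$
where $I_1$ is the integral over $\{|h|<1\}$ and $I_2$ the one over $\{|h|\ge 1\}$.

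For the far part $I_2$ I would argue exactly as in the estimate of the term $I_2$ in the proof of Lemma~\ref{teo1}. Since $|h|^{-s_1}\le 1$ when $|h|\ge 1$, \eqref{P5} gives $G(|u(x+h)-u(x)|\,|h|^{-s_1})\le |h|^{-s_1}G(|u(x+h)-u(x)|)$; then the subadditivity property \eqref{P4} together with translation invariance in $x$ yields
$$
I_2\le \C\,\Phi_G(u)\, n\omega_n\int_1^\infty r^{-1-s_1}\,dr=\frac{\C n\omega_n}{s_1}\Phi_G(u),
$$
so that $(1-s_1)I_2$ fits inside the announced $\Phi_G(u)$-term (and leaves room for a further $\Phi_G(u)$-contribution coming from the near piece).

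The heart of the matter is the near part $I_1$. On $\{|h|<1\}$ one has $|h|^{-s_1}=|h|^{-s_2}|h|^{s_2-s_1}$ with $0<|h|^{s_2-s_1}<1$, so by \eqref{P5}
$$
G\left(\frac{|u(x+h)-u(x)|}{|h|^{s_1}}\right)\le |h|^{s_2-s_1}\,G\left(\frac{|u(x+h)-u(x)|}{|h|^{s_2}}\right),
$$
and hence
$$
(1-s_1)I_1\le (1-s_1)\int_{\R^n}\int_{|h|<1}|h|^{s_2-s_1}\,G\left(\frac{|u(x+h)-u(x)|}{|h|^{s_2}}\right)\frac{dh\,dx}{|h|^n}.
$$
What remains is to bound the right-hand side by $2^{1-s_1}(1-s_2)\,\Phi_{s_2,G}(u)$, possibly up to one more $\Phi_G(u)$-term which is then merged with the estimate for $I_2$. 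I would pass to polar coordinates $h=r\omega$ so that the inner integral becomes $\int_0^1 r^{s_2-s_1-1}\,G(|u(x+r\omega)-u(x)|\,r^{-s_2})\,dr$ and reorganize this in terms of the same radial profile that appears in $\Phi_{s_2,G}(u)$; I expect the factor $2^{1-s_1}$ to emerge — in the spirit of the bound for $I_1$ in Lemma~\ref{teo1}, where $\int_0^1 r^{-s}\,dr=\tfrac1{1-s}$ shows up — from an integral of the type $\int_0^2 r^{-s_1}\,dr=\tfrac{2^{1-s_1}}{1-s_1}$, and the factor $(1-s_2)$ from recognizing the reorganized integral as the \emph{renormalized} $s_2$-energy.

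The main obstacle is precisely this last estimate. The naive bound $|h|^{s_2-s_1}\le 1$ only yields the constant $(1-s_1)$ in place of $2^{1-s_1}(1-s_2)$, which is useless for the applications: in deriving Theorem~\ref{main2} one applies this inequality with $s_2=s_k\uparrow1$ and needs the constant multiplying $(1-s_2)\Phi_{s_2,G}(u)$ to stay bounded — in fact to tend to $1$ — as $s_1\uparrow1$, so the weight $|h|^{s_2-s_1}$ must be exploited in an essential way. The delicate point is that any rescaling of the radial variable has to be arranged so as to leave the quotient $|u(x+r\omega)-u(x)|/r^{s_2}$ intact, since the ``increment length'' in its numerator and the exponent $s_2$ are rigidly coupled; otherwise the resulting integral is no longer comparable to $\Phi_{s_2,G}(u)$. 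Finally, as in the preceding lemmas, the whole computation is first carried out under sufficient regularity (or simply invoking Fatou's lemma and the nonnegativity of all integrands) and then transferred to an arbitrary $u\in L^G(\R^n)$ by the standard truncation–mollification argument, using Lemmas~\ref{lema.reg} and \ref{lema.trunc}.
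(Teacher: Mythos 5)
Your decomposition into a near part $\{|h|<1\}$ and a far part $\{|h|\ge 1\}$, your bound for the far part, and the pointwise comparison $G(|h|^{-s_1}t)\le |h|^{s_2-s_1}G(|h|^{-s_2}t)$ via \eqref{P5} all coincide with the paper's argument. But the step you yourself flag as ``the main obstacle'' is not a reorganization you have merely postponed: it is the one genuine idea in the proof, and your proposal does not contain it. As you correctly observe, the trivial bound $|h|^{s_2-s_1}\le 1$ only yields the constant $(1-s_1)$, and no rescaling of the radial variable alone will produce the factor $2^{1-s_1}(1-s_2)$; so as written the claimed inequality is not reached.

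The missing ingredient is Lemma \ref{lema.bbm}, a Chebyshev-type correlation inequality (Lemma 2 of \cite{BBM}): if $g(t)\le c\,g(t/2)$ on $(0,1)$ and $h$ is decreasing, then $\int_0^1 t^r g h\,dt\ \ge\ \tfrac{r+1}{2^{r+1}}\int_0^1 t^r g\,dt\int_0^1 t^r h\,dt$. To use it, introduce the spherical average $F_s(t)=\int_{\Sn}\intr G\bigl(|u(x+tw)-u(x)|\,t^{-s}\bigr)\,dx\,dS_w$ and $g_s(t)=F_s(t)/t^{1-s}$, so that the near part equals $\int_0^1 t^{-s}g_s(t)\,dt$; the $\Delta_2$ condition \eqref{H2} together with \eqref{P5} gives the doubling property $F_s(2t)\le 2^{1-s}\C F_s(t)$, i.e. $g_s(2t)\le \C g_s(t)$, which is exactly the hypothesis on $g$. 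Applying the lemma with $r=-s_1$ and $h(t)=t^{-(s_2-s_1)}$ (decreasing) yields
\begin{equation*}
\int_0^1 \frac{g_{s_1}(t)}{t^{s_1}}\,dt\ \le\ 2^{1-s_1}\,\frac{1-s_2}{1-s_1}\int_0^1 t^{-(s_2-s_1)}\,\frac{g_{s_1}(t)}{t^{s_1}}\,dt\ \le\ 2^{1-s_1}\,\frac{1-s_2}{1-s_1}\int_0^1 \frac{g_{s_2}(t)}{t^{s_2}}\,dt,
\end{equation*}
the last step being your \eqref{P5} comparison $g_{s_1}\le g_{s_2}$. (The paper applies the lemma to $g_{s_1}$ and passes to $s_2$ afterwards; your order --- first pass to $s_2$, then remove the weight $t^{s_2-s_1}$ --- also works, applying the same lemma to $g_{s_2}$, which is doubling with the same constant.) Two minor remarks: no mollification or truncation is needed here, since all integrands are nonnegative and the argument is purely Fubini plus pointwise inequalities, so your final paragraph is superfluous; and your far-part constant $\C n\omega_n/s_1$ is in fact slightly sharper than the $2\C n\omega_n/s_1$ in the statement, so it causes no harm.
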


The key point in proving Theorem \ref{teo.s1.s2} is the following lemma:
\begin{lema} \label{lema.bbm}
Let $g,h:(0,1) \to \R^+$ measurable functions. Suppose that for some constant $c>0$ it holds that $g(t) \leq c g(\tfrac{t}{2})$ for $t\in (0,1)$ and that $h$ is decreasing. Then, given $r>-1$, 
$$
\int_0^1 t^r g(t)h(t)\,dt \geq \frac{r+1}{2^{r+1}}\int_0^1 t^r g(t)\,dt \int_0^1 t^r h(t) \,dt.
$$
\end{lema}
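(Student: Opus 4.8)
The plan is to prove Lemma~\ref{lema.bbm} by a dyadic decomposition of the interval $(0,1)$, exploiting the doubling hypothesis on $g$ to compare the integral of $g$ over consecutive dyadic annuli, and the monotonicity of $h$ to compare its averages. Write $(0,1) = \bigcup_{k\ge 0} I_k$ with $I_k = (2^{-(k+1)}, 2^{-k}]$, and set $a_k := \int_{I_k} t^r g(t)\,dt$ and $b_k := \int_{I_k} t^r h(t)\,dt$. The goal is to bound $\int_0^1 t^r g(t)\,dt \cdot \int_0^1 t^r h(t)\,dt = \big(\sum_k a_k\big)\big(\sum_j b_j\big) = \sum_{k,j} a_k b_j$ from above by a constant times $\int_0^1 t^r g(t) h(t)\,dt = \sum_k \int_{I_k} t^r g(t) h(t)\,dt$.

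\textbf{Key steps.} First I would record the two elementary consequences of the hypotheses. From $g(t) \le c\, g(t/2)$, by rescaling the variable $t \mapsto 2t$ on $I_k$ one gets $a_k \le c\, 2^{-(r+1)} a_{k+1}$ — i.e. going one dyadic step toward $0$ multiplies the weighted mass of $g$ by at most a fixed factor — hence by iteration $a_k \le (c\,2^{-(r+1)})^{j-k} a_j$ whenever $k \le j$ (and since $r > -1$, there is a genuine geometric decay controlled by $c$ versus $2^{r+1}$; one must be a little careful here since $c$ need not be small, so in fact the useful direction is $a_j \ge (c 2^{-(r+1)})^{\,?}\cdots$ — I will instead use the bound $a_j \le c 2^{-(r+1)} a_{j+1}$ to compare any $a_k$ with $a_j$ for $j\ge k$). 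Second, since $h$ is decreasing, on $I_k$ we have $h(t) \ge h(2^{-k}) \ge \sup_{I_{k+1}} h \ge$ average of $h$ on $I_{k+1}$; more precisely $\inf_{I_k} h \ge \sup_{I_{k+1}} h$, which gives $b_k / |I_k| \ge b_{k+1}/|I_{k+1}|$ after accounting for the weight $t^r$, so that $\min_{I_k}(t^r h(t))$-type lower bounds propagate correctly. The heart of the argument is then to estimate $\sum_{k,j} a_k b_j$ by splitting into $j \le k$ and $j > k$. In the first regime, $b_j$ is large (small $t$), $a_k$ is controlled by $a_j$ via doubling; in the second, $a_k$ is large (small $t$) — wait, small $t$ means large $k$ — and one bounds $b_j$ by $b_k$ via monotonicity. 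In each regime one extracts the "diagonal" term $\int_{I_k} t^r g h$, which dominates $2^{-(something)} a_k b_k$ by the mean-value comparisons, and sums the resulting geometric series, the geometric ratio being exactly $2^{-(r+1)}$ (times $c$ in one direction and $1$ in the other), producing the factor $\frac{r+1}{2^{r+1}}$. I would track the constant carefully to land on exactly $\frac{r+1}{2^{r+1}}$: note $\sum_{k\ge 0} 2^{-(r+1)k} = \frac{1}{1-2^{-(r+1)}}$ and $\int_0^1 t^r\,dt = \frac{1}{r+1}$, which is where that constant originates.

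\textbf{Main obstacle.} The delicate point is that the doubling constant $c$ is not assumed small, so one cannot naively sum a geometric series with ratio $c\,2^{-(r+1)}$ in the $a_k$-direction. The resolution is that the doubling inequality is only used "one step at a time" together with the fact that $h$ is decreasing: one never needs to sum $a_k$ against a $b_j$ with $j$ much larger than $k$ using $g$-doubling alone; rather, the pairing is organized so that whenever $g$-doubling is invoked the series is summed in the variable on which $h$ (decreasing) supplies the genuine geometric gain $2^{-(r+1)}$, and vice versa. Concretely, I expect the clean route is: compare $\int_0^1 t^r g(t)h(t)\,dt \ge \sum_k (\inf_{I_k} h)\, a_k$ and separately $\int_0^1 t^r g(t) h(t)\,dt \ge \sum_k (\text{suitable average of } g \text{ on } I_k \text{ or later})\, b_k$, then add the two expressions for $\big(\int t^r g\big)\big(\int t^r h\big)$ obtained by the Abel-summation/rearrangement $\sum_{k,j} a_k b_j = \sum_k a_k \sum_{j\le k} b_j + \sum_k b_k \sum_{j < k} a_j$, bounding $\sum_{j\le k} b_j \le C \cdot b_k$-type tails via monotonicity of $h$ and $\sum_{j<k} a_j \le C' a_k$-type tails via doubling of $g$ (this latter sum goes toward larger $t$ where doubling does give decay: $a_{j} \le c2^{-(r+1)} a_{j+1}$ means $a_j$ is small compared to $a_{j+1}$, so $\sum_{j<k} a_j$ is a convergent geometric tail bounded by $\frac{c 2^{-(r+1)}}{1-c2^{-(r+1)}} a_k$ only if $c 2^{-(r+1)} < 1$; if not, one instead bounds $\sum_{j \ge k} a_j$). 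Getting these tail comparisons to point in compatible directions and to yield precisely the asserted constant is the real work; everything else is bookkeeping with $\int_0^1 t^r\,dt = (r+1)^{-1}$.
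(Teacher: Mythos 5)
Your proposal is an outline rather than a proof: the decisive step --- summing the off-diagonal products $a_k b_j$ against the diagonal terms --- is explicitly deferred (``getting these tail comparisons to point in compatible directions \dots is the real work''), and it is precisely there that the argument cannot be completed. The obstacle you flag, namely that the doubling constant $c$ need not be small, is not bookkeeping: the geometric series you need has ratio $c\,2^{-(r+1)}$, which diverges once $c\ge 2^{r+1}$, and in fact the inequality as stated (with the $c$-independent constant $\tfrac{r+1}{2^{r+1}}$) is \emph{false} in that regime. Take $r=0$, $g(t)=t$ (so $g(t)=2g(t/2)$ and the hypothesis holds with $c=2$) and $h(t)=t^{-3/4}$, which is positive, decreasing and integrable. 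Then $\int_0^1 g h\,dt=\int_0^1 t^{1/4}\,dt=\tfrac45$, while $\tfrac{r+1}{2^{r+1}}\int_0^1 g\,dt\int_0^1 h\,dt=\tfrac12\cdot\tfrac12\cdot 4=1$, so the asserted inequality fails. (Worse, $g(t)=t^m$ with $c=2^m$ and $h$ concentrated near $0$ shows that no constant depending only on $r$ and $c$ can work under the stated hypothesis.) So your instinct that the large-$c$ regime is the crux was right, but the honest conclusion is that the deferred step is impossible, not delicate.

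For comparison, the paper gives no proof at all: it refers to Lemma 2 of Bourgain--Brezis--Mironescu, which is the case $c=1$, i.e.\ $g(t)\le g(t/2)$. In that case your dyadic scheme does close --- one has $a_k\le 2^{r+1}a_{k+1}$, the relevant ratio is $2^{-(r+1)}<1$, and a bound of the claimed form follows --- and this is essentially the standard argument. In BBM the function $g_s$ is normalized (dividing by $t^{p(1-s)}$ rather than $t^{1-s}$) precisely so that the triangle inequality and $p$-homogeneity yield $c=1$; in the Orlicz setting of this paper one only obtains $g_{s}(t)\le \C\, g_{s}(t/2)$ with $\C>2$, which is exactly the regime where the lemma breaks down. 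Hence the ``slight modification'' invoked by the authors is the step your sketch leaves open, and it cannot be carried out as stated; both the lemma and its application in the proof of Theorem \ref{teo.s1.s2} (which uses $c=\C>2$) require a genuinely different formulation.
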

We omit the  proof Lemma \ref{lema.bbm} since it follows with a slight modification of the proof of Lemma 2 in \cite{BBM}, which is stated with $c=1$.

Now we proceed with the proof of the estimate.

\begin{proof}[Proof of Theorem \ref{teo.s1.s2}]
Given $u\in L^G(\R^n)$, we define for $t>0$ and $0<s<1$,
\begin{align*}
F_s(t)&=\int_{\Sn} \intr G\left( \frac{|u(x+tw)-u(x)|}{t^s}\right) \,dx \,dS_w\\
&=\frac{1}{t^{n-1}}\int_{|h|=t} \intr G\left( \frac{|u(x+h)-u(x)|}{|h|^s}\right) \,dx \,dS_h.
\end{align*}
From the $\Delta_2$ property and \eqref{P5} we get
\begin{align*}
G&\left( \frac{|u(x+2tw)-u(x)|}{2^st^s}\right) =
G\left( \frac{|u(x+2tw)-u(x+tw)|+|u(x+tw)-u(x)|}{2^st^s}\right)\\
&\qquad\leq 
\frac{\C}{2^s}\left\{ G\left( \frac{|u(x+2tw)-u(x+tw)|}{t^s}\right)+
G\left( \frac{|u(x+tw)-u(x)|}{t^s}\right) \right\},
\end{align*}
integrating over $\R^n$ and $\Sn$ and using the invariance of the integral we get that
$$
F_s(2t)\leq  2^{1-s} \C F_s(t).
$$
Now, if we consider the function $g_s(t)=\frac{F_s(t)}{t^{1-s}}$  we obtain that
$$
g_s(2t)=\frac{F_s(2t)}{2^{1-s} t^{1-s}}\leq  \C\frac{F_s(t)}{ t^{1-s}} = \C g_s(t).
$$
Then, observe that
\begin{align} \label{dess1}
\begin{split}
\int_{|h|<1} \intr G & \left( \frac{|u(x+h)-u(x)|}{|h|^s}\right) \,\frac{dx}{|h|^n} \,dh\\
&=\int_0^1 \int_{|h|=t} \intr G\left( \frac{|u(x+h)-u(x)|}{|h|^s}\right) \,\frac{dx}{|h|^n} \,dS_h \,dt\\
&=\int_0^1 \frac{F_s(t)}{t} \,dt=\int_0^1 \frac{g_s(t)}{t^{s}}\,dt.
\end{split}
\end{align}
Consider now $0<s_1<s_2<1$. Therefore, using \eqref{P5}, for any $0<t<1$ 
\begin{align*}
g_{s_1}(t)=\frac{F_{s_1}(t)}{t^{1-s_1}} &= \frac{1}{t^{1-s_1}} \int_{\Sn} \intr G\left( \frac{|u(x+tw)-u(x)|}{t^{s_1}}\right) \,dx \,dS_w\\
&\leq 
 \frac{1}{t^{1-s_1}} \frac{1}{t^{s_1-s_2}}\int_{S^{n-1}} \intr G\left( \frac{|u(x+tw)-u(x)|}{t^{s_2} }\right) \,dx \,dS_w\\
&= \frac{F_{s_2}(t)}{t^{1-s_2}} = g_{s_2}(t)
\end{align*}
frome where we obtain that
$$
\int_0^1 \frac{g_{s_2}(t)}{t^{s_2}} \,dt \geq 
\int_0^1 \frac{g_{s_1}(t)}{t^{s_2}} \,dt  = \int_0^1 \frac{1}{t^{s_2-s_1}}\frac{g_{s_1}(t)}{t^{s_1}} \,dt.
$$
Now, from Lemma \ref{lema.bbm} with $r=-s_1$ and $h(t)=t^{-(s_2-s_1)}$ we get
\begin{align*}
 \int_0^1 \frac{1}{t^{s_2-s_1}}\frac{g_{s_1}(t)}{t^{s_1}} \,dt &\geq \frac{1-s_1}{2^{1-s_1}} \int_0^1 t^{-s_1}g_{s_1}(t) \,dt \int_0^1 t^{-s_2}\,dt\\
 &=\frac{1}{2^{1-s_1}} \frac{1-s_1}{1-s_2}\int_0^1 \frac{g_{s_1}(t)}{t^{s_1}} \,dt
\end{align*}
and then
\begin{align} \label{dess2}
\begin{split}
 \int_0^1 \frac{g_{s_1}(t)}{t^{s_1}} \,dt &\leq 2^{1-s_1}\frac{1-s_2}{1-s_1}  \int_0^1 \frac{1}{t^{s_2-s_1}}\frac{g_{s_1}(t)}{t^{s_1}} \,dt \\
&\leq 2^{1-s_1}\frac{1-s_2}{1-s_1}  \int_0^1 \frac{g_{s_2}(t)}{t^{s_2}} \,dt.
\end{split}
\end{align}
Mixing up \eqref{dess1} and \eqref{dess2} we get that
\begin{align*}
\frac{1-s_1}{2^{1-s_1}}\int_{|h|<1} \intr G &\left( \frac{|u(x+h)-u(x)|}{|h|^{s_1}}\right) \,\frac{dx}{|h|^n} \,dS_h\\ 
&\leq 
 (1-s_2) \int_{|h|<1} \intr G\left( \frac{|u(x+h)-u(x)|}{|h|^{s_2}}\right) \,\frac{dx}{|h|^n} \,dS_h.
\end{align*}
Finally, 
\begin{align*}
\int_{|h|\geq 1} \intr G &\left( \frac{|u(x+h)-u(x)|}{|h|^{s_1}}\right) \,\frac{dx}{|h|^n} \,dS_h\\ &\leq
\C \int_{|h|\geq 1}   \intr \left(G(|u(x+h|)+G(|u(x)|)  \right)\,\frac{dx}{|h|^{s_1+n}} \,dS_h \\
&\leq 2\C n\omega_n \Phi_{G}(u)
	\int_1^\infty \frac{1}{r^{s_1+1}} \,dr \\
	&= \frac{2\C n\omega_n}{s_1} \Phi_{G}(u)
\end{align*}
and the result follows from the  last two inequalities.
\end{proof}

\begin{proof}[Proof of Theorem \ref{main2}]
 Let $0<s_k \uparrow 1$ and  $\{u_k\}_{k\in\N}\subset L^G(\R^n)$ be such that
$$
	\sup_{k\in\N} \Phi_{s_k,G}(u_k) <\infty \quad \text{ and }\quad  \sup_{k\in\N} \Phi_G(u_k):=M <\infty.
$$

Take $0<t<1$ be fixed and, from Theorem \ref{teo.s1.s2}, we obtain that $\{u_k\}_{k\in\N}\subset W^{t,G}(\R^n)$ is bounded. Hence, by Theorem \ref{teo.comp} there exists a subsequence (still denoted by $u_k$) and a function $u\in L^G(\R^n)$ such that $u_k\to u$ in $L^G_{loc}(\R^n)$. Moreover, without loss of generality, we may assume that $u_k\to u$ a.e. in $\R^n$.

From Fatou's Lemma we get
$$
	\Phi_{t,G}(u)\leq \liminf_{k\to\infty} \Phi_{t,G}(u_k),
$$
and from Theorem \ref{teo.s1.s2} we obtain that
$$
	(1-t)\Phi_{t,G}(u)\leq \liminf_{k\to\infty}  2^{1-t} (1-s_k) \Phi_{s_k,G}(u_k) + \frac{2\C n\omega_n(1-t)}{t} M.
$$
Finally, taking limit as $t\uparrow 1$ and invoking Theorem \ref{main1}, the result follows.
\end{proof}

\section{Some consequences and applications}

In this final section, we show some immediate consequences of our main theorems, Theorems \ref{main1} and \ref{main2}.

Throughout this section $G$ will be an Orlicz function such that the limit in \eqref{phitilde} exists.

When working on a domain $\Omega\subset \R^n$ (bounded or not) it is useful to introduce the following notations.

The space $W^{1,G}_0(\Omega)$ denotes, as usual, the closure of $C^\infty_c(\Omega)$ with respect to the $\|\cdot\|_{1,G}-$norm.

In the fractional setting, we use the following definitions
$$
W^{s,G}_0(\Omega) := \{u\in W^{s,G}(\R^n) \colon u=0 \text{ a.e. in } \R^n\setminus \Omega \}.
$$

Alternatively, one can consider
$$
\widetilde{W}^{s,G}(\Omega) := \overline{C_c^\infty(\Omega)}^{\|\cdot\|_{s,G}}.
$$

In the classical case, i.e. when $G(t)=t^p$, these spaces $W^{s,p}_0(\Omega)$ and $\widetilde{W}^{s,p}(\Omega)$ are known to coincide when $s<\tfrac{1}{p}$ or when $0<s<1$ and $\Omega$ has Lipschitz boundary. See \cite{DPV}.

In this paper, we shall not investigate the cases where these spaces $W^{s,G}_0(\Omega)$ and $\widetilde{W}^{s,G}(\Omega)$ coincide and use the space $W^{s,G}_0(\Omega)$ to illustrate our applications.

In what follows, every function $u\in L^G(\Omega)$ it will be assumed to be extended by 0 to $\R^n\setminus \Omega$.

Finally, observe that the inclusions
$$
W^{s,G}_0(\Omega)\subset W^{s,G}(\R^n)\subset L^G(\R^n)
$$
imply
$$
L^{G^*}(\Omega)\subset L^{G^*}(\R^n)\subset W^{-s,G^*}(\R^n)\subset W^{-s,G^*}(\Omega),
$$
where $W^{-s,G^*}(\Omega)$ denotes the (topological) dual space of $W^{s,G}_0(\Omega)$.

\subsection{Poincar\'e's  inequality}

A first consequence that we get is Poincar\'e's inequality.

Let us first recall Poincar\'e's inequality for functions in $W^{1,G}_0(\Omega)$ whose proof can be found, for instance, in \cite[Lemma 2.4]{FuchsOsmolovski}.
\begin{equation}\label{eq.Poincare.1}
\int_\Omega G(|u|)\, dx \le A \int_\Omega G(|\nabla u|)\, dx
\end{equation}
for every $u\in W^{1,G}_0(\Omega)$.

\begin{thm}\label{teo.Poincare}
Let $A$ be the optimal constant in Poincar\'e's inequality \eqref{eq.Poincare.1}. Then, given $\delta>0$ there exists $0<s_0<1$ such that
\begin{equation}\label{eq.Poincare.s}
\int_\Omega G(|u|)\, dx \le \left(\frac{A}{c_1} + \delta\right) (1-s)\iint_{\R^n\times\R^n} G\left(\frac{|u(x)-u(y)|}{|x-y|^s}\right) \frac{dxdy}{|x-y|^n},
\end{equation}
for every $s_0\le s<1$ and every $u\in L^G(\Omega)$. The constant $c_1$ is the one given in Proposition \ref{prop.equiv}.
\end{thm}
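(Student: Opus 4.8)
The plan is to argue by contradiction combined with the asymptotic result of Theorem \ref{main1}. Suppose \eqref{eq.Poincare.s} fails. Then there is a $\delta>0$, a sequence $s_k\uparrow 1$ and functions $u_k\in L^G(\Omega)$ with
$$
\int_\Omega G(|u_k|)\,dx > \left(\frac{A}{c_1}+\delta\right)(1-s_k)\Phi_{s_k,G}(u_k).
$$
By homogeneity of the failure (both sides scale under $u\mapsto u/\lambda$ only through $G$, so one must be a little careful) one normalizes, say, $\Phi_G(u_k)=1$; then the right-hand side is bounded below away from zero is false — rather, the displayed inequality forces $(1-s_k)\Phi_{s_k,G}(u_k)< \frac{c_1}{A+c_1\delta}$, so in particular $\sup_k (1-s_k)\Phi_{s_k,G}(u_k)<\infty$ and $\sup_k\Phi_G(u_k)=1<\infty$. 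Hence the hypotheses of Theorem \ref{main2} are met.

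Next I would apply Theorem \ref{main2}: there is $u\in W^{1,G}(\R^n)$ and a subsequence (not relabeled) with $u_k\to u$ in $L^G_{\text{loc}}(\R^n)$, $u_k\to u$ a.e., and
$$
\Phi_{\tilde G}(\nabla u)\le \liminf_{k\to\infty}(1-s_k)\Phi_{s_k,G}(u_k).
$$
Since each $u_k$ vanishes a.e. outside $\Omega$, so does $u$, and a standard argument (truncation/mollification against the fact that $u$ vanishes outside $\Omega$, or simply noting $u\in W^{1,G}(\R^n)$ with $u=0$ a.e. on $\R^n\setminus\Omega$ and $\Omega$ understood with enough regularity, as in \cite{FuchsOsmolovski}) gives $u\in W^{1,G}_0(\Omega)$. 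By Fatou applied to $G(|u_k|)\to G(|u|)$ on $\Omega$,
$$
\Phi_G(u)\le \liminf_{k\to\infty}\Phi_G(u_k)\le 1,
$$
and passing to the limit in the normalized failure inequality, together with the lower semicontinuity just used for the left side and the $\liminf$ estimate for the right side,
$$
\Phi_G(u)\ge \left(\frac{A}{c_1}+\delta\right)\Phi_{\tilde G}(\nabla u).
$$

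Now I invoke the two remaining ingredients. First, Proposition \ref{prop.equiv} gives $\tilde G(a)\ge c_1 G(a)$ for every $a\ge 0$, hence $\Phi_{\tilde G}(\nabla u)\ge c_1\Phi_G(\nabla u)$. Second, the classical Orlicz--Sobolev Poincar\'e inequality \eqref{eq.Poincare.1} with its optimal constant $A$ gives $\Phi_G(u)\le A\,\Phi_G(\nabla u)$. Combining,
$$
\left(\frac{A}{c_1}+\delta\right)c_1\,\Phi_G(\nabla u)\le \left(\frac{A}{c_1}+\delta\right)\Phi_{\tilde G}(\nabla u)\le \Phi_G(u)\le A\,\Phi_G(\nabla u),
$$
so $(A+c_1\delta)\Phi_G(\nabla u)\le A\,\Phi_G(\nabla u)$, forcing $\Phi_G(\nabla u)=0$, hence $\nabla u=0$, hence $u=0$ (as $u\in W^{1,G}_0(\Omega)$), hence $\Phi_G(u)=0$. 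This contradicts the fact that the right-hand side of the normalized failure inequality is strictly positive — more precisely, one should set up the normalization so that a nontrivial limit survives. The cleanest way to guarantee this is to normalize instead by $[u_k]$-type quantities: assume $(1-s_k)\Phi_{s_k,G}(u_k)$ stays bounded while $\Phi_G(u_k)$ does not go to $0$; rescaling $u_k$ by a Luxemburg-type factor so that $\Phi_G(u_k)=1$ is legitimate because the claimed inequality \eqref{eq.Poincare.s} is equivalent to its own rescaled version only when $G$ is homogeneous, so the correct device is to work directly with the Luxemburg norms and use that $\Phi_G(u_k)=1$ implies $\|u_k\|_G=1$; then the limit $u$ satisfies $\|u\|_G\ge$ something positive by the $L^G_{\text{loc}}$ convergence on $\Omega$ only if no mass escapes to infinity.

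The main obstacle, and the step requiring genuine care, is exactly this last point: controlling possible loss of mass at infinity so that the limit $u$ is nontrivial, and verifying $u\in W^{1,G}_0(\Omega)$ rather than merely $W^{1,G}(\R^n)$. If $\Omega$ is bounded this is immediate since $L^G_{\text{loc}}$ convergence is $L^G$ convergence on $\Omega$ and $\|u\|_{L^G(\Omega)}=\lim\|u_k\|_{L^G(\Omega)}>0$, and membership in $W^{1,G}_0(\Omega)$ follows from the fact that weak $W^{1,G}$-limits of functions supported in $\overline\Omega$ again vanish outside $\Omega$ together with \eqref{eq.Poincare.1}'s standing hypotheses. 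For unbounded $\Omega$ one would restrict to the regime covered by \eqref{eq.Poincare.1} (where the Poincar\'e constant $A$ is finite), in which mass cannot concentrate at infinity because of the uniform modular bound; I would simply state the theorem under the hypothesis that \eqref{eq.Poincare.1} holds and carry out the contradiction argument there, noting that the constant $c_1$ entering the final bound is precisely the lower equivalence constant of Proposition \ref{prop.equiv}.
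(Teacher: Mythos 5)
Your argument is essentially the paper's own proof: argue by contradiction with a normalized sequence $\Phi_G(u_k)=1$, apply the compactness and $\liminf$ statements of Theorem \ref{main2}, use Proposition \ref{prop.equiv} to pass from $\tilde G$ back to $G$, and contradict the classical Poincar\'e inequality \eqref{eq.Poincare.1}. The two delicate points you flag --- the legitimacy of the normalization when $G$ is not homogeneous, and the need for strong (not merely local) $L^G$ convergence so that $\Phi_G(u)=1$ survives in the limit --- are present in the paper's proof as well, which simply asserts the normalized sequence exists and that $\Phi_G(u-u_j)\to 0$, so your treatment is no less complete than the original.
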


\begin{proof}
The argument in this proof is taken from \cite{Ponce}.

The proof follows by contradiction. Assume the result is false, therefore there exists a constant $C>\frac{A}{c_1}$, a sequence $s_j\uparrow 1$ and $u_j\in W^{s_j,G}_0(\Omega)$ such that
$$
\Phi_G(u_j) = 1\quad \text{and}\quad (1-s_j)\Phi_{s_j, G}(u_j)\le \frac{1}{C}.
$$

Now, from Theorem \ref{main2}, passing to a subsequence if necessary, we have the existence of a function $u\in W^{1,\tilde G}(\R^n)$ such that $\Phi_G(u-u_j) \to 0$ and $u_j\to u$ a.e. in $\R^n$. Hence, $u\in W^{1,\tilde G}_0(\Omega)$, $\Phi_G(u)=1$ and, again by Theorem \ref{main2} and Proposition \ref{prop.equiv},
$$
c_1\Phi_G(|\nabla u|)\le \Phi_{\tilde G}(|\nabla u|)\le \liminf_{j\to\infty} (1-s_j)\Phi_{s_j,G}(u_j)\le \frac{1}{C}.
$$
This last inequality contradicts the optimality of the constant $A$ in \eqref{eq.Poincare.1}.
\end{proof}

We immediately obtain the next corollary.
\begin{cor}
Let $\Omega\subset \R^n$ be open and bounded. Then there exists a constant $C>0$ depending on $n$, $G$ and $\Omega$ such that
$$
\int_\Omega G(|u|)\, dx \le C (1-s)\iint_{\R^n\times\R^n} G\left(\frac{|u(x)-u(y)|}{|x-y|^s}\right) \frac{dxdy}{|x-y|^n},
$$
for every $0<s<1$ and $u\in L^G(\Omega)$.
\end{cor}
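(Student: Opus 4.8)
The plan is to deduce this corollary from Theorem \ref{teo.Poincare} by handling the dependence of the constant $s_0$ on $\delta$, and by disposing of the range of small $s$ where Theorem \ref{teo.Poincare} gives no information. First I would fix $\delta=1$, say, and apply Theorem \ref{teo.Poincare} to obtain $s_0=s_0(n,G,\Omega)\in(0,1)$ such that \eqref{eq.Poincare.s} holds with constant $A/c_1+1$ for every $s\in[s_0,1)$ and every $u\in L^G(\Omega)$. This already settles the claimed inequality, with $C=A/c_1+1$, on the interval $s\in[s_0,1)$; note that for $\Omega$ bounded the Poincar\'e constant $A$ in \eqref{eq.Poincare.1} is finite, so $A/c_1$ is a genuine finite constant.

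It remains to treat $0<s<s_0$. Here I would invoke the monotonicity estimate of Theorem \ref{teo.s1.s2}: applying it with $s_1=s$ and $s_2=s_0$ gives
\begin{align*}
(1-s)\Phi_{s,G}(u) \geq \frac{1}{2^{1-s}}\left[(1-s_0)\Phi_{s_0,G}(u) - \frac{2\C n\omega_n (1-s)}{s}\Phi_G(u)\right].
\end{align*}
Rearranging, $(1-s_0)\Phi_{s_0,G}(u) \le 2^{1-s}(1-s)\Phi_{s,G}(u) + \frac{2\C n\omega_n(1-s)}{s}\Phi_G(u)$. Combining with the already-established inequality at level $s_0$, namely $\Phi_G(u)\le (A/c_1+1)(1-s_0)\Phi_{s_0,G}(u)$, yields
\begin{align*}
\Phi_G(u) \le \left(\tfrac{A}{c_1}+1\right)\left[2^{1-s}(1-s)\Phi_{s,G}(u) + \tfrac{2\C n\omega_n(1-s)}{s}\Phi_G(u)\right].
\end{align*}
The term involving $\Phi_G(u)$ on the right is the obstacle: its coefficient $(A/c_1+1)\,2\C n\omega_n (1-s)/s$ blows up as $s\downarrow 0$, so one cannot simply absorb it. The fix is to restrict to an intermediate threshold: choose $s_1\in(0,s_0)$ small enough that $(A/c_1+1)\,2\C n\omega_n(1-s_1)/s_1 \le \tfrac12$ fails for small $s$, so instead one works only on $s\in[s_1,s_0)$, where $1/s\le 1/s_1$ and the coefficient is bounded; then absorbing is possible provided we further shrink — actually the cleaner route is to pick $\delta$ in Theorem \ref{teo.Poincare} small and note that absorption on $[s_1,s_0)$ requires $(A/c_1+\delta)\,2\C n\omega_n(1-s_1)/s_1<1$, which cannot hold for $s_1$ near $0$.

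Because of this, the honest argument for the full range $0<s<1$ should instead run as a direct compactness/contradiction argument mirroring the proof of Theorem \ref{teo.Poincare} but over \emph{all} of $(0,1)$. Suppose the corollary fails; then there are $s_j\in(0,1)$ and $u_j\in L^G(\Omega)$ with $\Phi_G(u_j)=1$ and $(1-s_j)\Phi_{s_j,G}(u_j)\le 1/j$. If $\limsup s_j<1$ we may pass to a subsequence with $s_j\to s_*<1$; fixing any $t<\liminf s_j$ and using Theorem \ref{teo.s1.s2} (with $s_1=t$, $s_2=s_j$) shows $\{u_j\}$ is bounded in $W^{t,G}(\R^n)$ with $(1-t)\Phi_{t,G}(u_j)\to 0$, so by Theorem \ref{teo.comp} a subsequence converges in $L^G_{\mathrm{loc}}$ and a.e. to some $u$ with $\Phi_G(u)=1$ (using $\Omega$ bounded, the convergence is in $L^G(\Omega)$, not merely locally); Fatou then forces $\Phi_{t,G}(u)=0$, hence $u$ is constant on each connected component, hence $u\equiv 0$ since $u=0$ outside $\Omega$, contradicting $\Phi_G(u)=1$. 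If instead $s_j\uparrow 1$ along a subsequence, Theorem \ref{teo.Poincare} (with $\delta=1$) directly gives $1=\Phi_G(u_j)\le(A/c_1+1)(1-s_j)\Phi_{s_j,G}(u_j)\le (A/c_1+1)/j\to 0$, a contradiction. I expect the main subtlety to be the upgrade from $L^G_{\mathrm{loc}}$-convergence to genuine $L^G(\Omega)$-convergence of the extensions-by-zero, which uses boundedness of $\Omega$ together with the uniform bound on $\Phi_G(u_j)$, and the verification that a constant in $W^{1,\tilde G}(\R^n)\cap\{u=0 \text{ a.e. in }\R^n\setminus\Omega\}$ must vanish.
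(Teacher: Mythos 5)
Your treatment of the range $s\ge s_0$ via Theorem \ref{teo.Poincare} with $\delta=1$ is exactly what the paper does, and you correctly diagnose that the absorption attempt via Theorem \ref{teo.s1.s2} is a dead end. However, the contradiction argument you substitute for the small-$s$ range contains a false step in the case $\limsup s_j<1$. From Theorem \ref{teo.s1.s2} with $s_1=t$, $s_2=s_j$ you only get
$$
(1-t)\Phi_{t,G}(u_j)\le 2^{1-t}(1-s_j)\Phi_{s_j,G}(u_j)+\frac{2\C n\omega_n(1-t)}{t}\Phi_G(u_j),
$$
and since $\Phi_G(u_j)=1$ the second term is a fixed positive constant; hence $(1-t)\Phi_{t,G}(u_j)$ is bounded but does \emph{not} tend to $0$. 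Fatou then yields only $\Phi_{t,G}(u)<\infty$, not $\Phi_{t,G}(u)=0$, so you cannot conclude that $u$ is constant. The step is repairable: in this case $1-s_j$ is bounded away from $0$, so the hypothesis gives $\Phi_{s_j,G}(u_j)\to 0$ directly; passing to a subsequence with $s_j\to s_*<1$ and $u_j\to u$ a.e., Fatou applied at the exponent $s_*$ (using the continuity of $G$) gives $\Phi_{s_*,G}(u)=0$, whence $u(x)=u(y)$ for a.e.\ $(x,y)$, so $u$ is constant and therefore $u\equiv 0$, contradicting $\Phi_G(u)=1$. With that correction your argument closes; the remaining points you flag ($L^G_{\mathrm{loc}}$ versus $L^G(\Omega)$ convergence, modular versus norm convergence) are routine under the $\Delta_2$ condition because $\Omega$ is bounded and all $u_j$ vanish outside $\Omega$.

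You should also know that the paper disposes of the small-$s$ range with a short direct estimate that avoids compactness altogether: since $u$ vanishes outside the bounded set $\Omega$, restricting the double integral to $x\in\Omega$ and $|x-y|\ge d+1$ with $d=\diam(\Omega)$ (so that $u(y)=0$) and applying Lemma \ref{lema.iterado} with $t=|x-y|^{-s}$ gives
$$
\Phi_{s,G}(u)\ge \int_\Omega G(|u(x)|)\left(\int_{|x-y|\ge d+1}\frac{dy}{|x-y|^{n+2qs}}\right)dx=\frac{n\omega_n}{2qs(d+1)^{2qs}}\int_\Omega G(|u|)\,dx,
$$
and the resulting constant $\tfrac{2qs(d+1)^{2qs}}{n\omega_n(1-s)}$ is uniformly bounded for $0<s\le s_0$. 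This is the cleaner route precisely in the region where Theorem \ref{teo.Poincare} gives no information.
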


\begin{proof}
To conclude the Corollary, we only need to prove Poincar\'e's inequality for any fixed $0<s<1$. To this end, let $d=\diam(\Omega)$. Thus, from Lemma \ref{lema.iterado},
\begin{align*}
\iint_{\R^n\times\R^n} G\left(\frac{|u(x)-u(y)|}{|x-y|^s}\right) \frac{dxdy}{|x-y|^n}&\ge \int_{\Omega} \int_{|x-y|\ge d+1} G\left(\frac{|u(x)|}{|x-y|^s}\right) \frac{dxdy}{|x-y|^n}\\
&\ge \int_\Omega G(|u(x)|)\left(\int_{|x-y|\ge d+1}\frac{dy}{|x-y|^{n+2qs}}\right) dx\\
&= \frac{n\omega_n}{2qs (d+1)^{2qs}} \int_\Omega G(|u(x)|)\, dx.
\end{align*}
Combining this inequality with Theorem \ref{teo.Poincare} the conclusion of the Corollary follows.
\end{proof}

\subsection{$\Gamma-$convergence}

Let us recall the definition of $\Gamma-$convergence.

\begin{defn}
Let $X$ be a metric space and $F,F_j\colon X  \to \bar \R$. We say that $F_j$  $\Gamma-$converges to $F$ if for every $u\in X$ the following conditions are valid.

\begin{itemize}
\item[(i)] (lim inf inequality) For every sequence $\{u_j\}_{j\in\N}\subset X$ such that $u_j \to u$ in $X$, 
$$
F(u)\leq \liminf_{j\to\infty} F_j(u_j).
$$

\item[(ii)] (lim sup inequality). For every $u\in X$, there is a sequence $\{u_j\}_{j\in\N}\subset X$ converging to $u$ such that 
$$
F(u)\geq  \limsup_{j\to\infty} F_j(u_j).
$$
\end{itemize}
The functional $F$ is called the $\Gamma-$limit of the sequence $\{F_j\}_{j\in\N}$ and it is denoted by $F_j \stackrel{\Gamma}{\to} F$ and 
$$
	F=\glim_{j\to\infty} F_j.
$$
\end{defn}

\begin{rem}
In the case where the functions are indexed by a continuous parameter, $\{F_\ve\}_{\ve>0}$, we say that 
$$
F=\glim_{\ve\downarrow 0} F_\ve,
$$
if and only if for every sequence $\ve_j\downarrow 0$, it follows that $F_{\ve_j}\stackrel{\Gamma}{\to} F$.
\end{rem}

\medskip

Now, let us fix $\Omega\subset \R^n$ open, and an Orlicz function $G$.

For any $0<s<1$, we define the functional $\J_s\colon L^G(\Omega)\to \bar \R$ by
\begin{align*}
	\J_s(u)=
	\begin{cases}
	(1-s)\Phi_{s,G}(u) &\qquad \text{ if } u\in W_0^{s,G}(\Omega)\\
	+\infty &\qquad \text{ otherwise},
	\end{cases}
\end{align*}
and the limit functional $\J\colon L^G(\Omega)\to \bar \R$ 
\begin{align*}
\J(u)=
\begin{cases}
\Phi_{\tilde G}(|\nabla u|) &\text{ if } u\in W_0^{1,\tilde G}(\Omega) \\
+\infty &\text{ otherwise}.
\end{cases}
\end{align*}

\begin{thm} \label{teo.gamma.conv}
With the previous notation we have that
$$
\J= \glim_{s\uparrow 1} \J_s.
$$
\end{thm}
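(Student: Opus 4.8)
The plan is to verify the two conditions in the definition of $\Gamma$-convergence for the functionals $\J_s$ and $\J$ on the metric space $X = L^G(\Omega)$ (with the metric induced by the Luxemburg norm), relying heavily on Theorems \ref{main1}, \ref{main2} and \ref{teo.s1.s2} proved above. Since $\Gamma$-convergence along a continuous parameter is defined through arbitrary sequences $s_j \uparrow 1$, I would fix such a sequence and prove the $\liminf$ and $\limsup$ inequalities for $\J_{s_j}$.

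For the \textbf{$\liminf$ inequality}, let $u_j \to u$ in $L^G(\Omega)$; I must show $\J(u) \le \liminf_{j} \J_{s_j}(u_j)$. If the right-hand side is $+\infty$ there is nothing to prove, so I may pass to a subsequence realizing the liminf as a finite limit, along which $(1-s_j)\Phi_{s_j,G}(u_j)$ is bounded and each $u_j \in W_0^{s_j,G}(\Omega)$; in particular $\sup_j \Phi_G(u_j) < \infty$ since $u_j \to u$ in $L^G$. Theorem \ref{main2} then applies: along a further subsequence $u_{j} \to \tilde u$ in $L^G_{\text{loc}}$ and a.e., with $\tilde u \in W^{1,G}(\R^n)$ and $\Phi_{\tilde G}(\nabla \tilde u) \le \liminf_j (1-s_j)\Phi_{s_j,G}(u_j)$. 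Since the $L^G_{\text{loc}}$ (and a.e.) limit must agree with the $L^G$ limit $u$, we get $\tilde u = u$; and because each $u_j$ vanishes a.e. outside $\Omega$, so does $u$, whence $u \in W_0^{1,\tilde G}(\Omega)$ and $\J(u) = \Phi_{\tilde G}(\nabla u) \le \liminf_j \J_{s_j}(u_j)$, as required.

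For the \textbf{$\limsup$ inequality} (in the formulation used in the paper, this is really the existence of a recovery sequence), I must produce, for each $u \in L^G(\Omega)$, a sequence $u_j \to u$ in $L^G(\Omega)$ with $\limsup_j \J_{s_j}(u_j) \le \J(u)$. If $u \notin W_0^{1,\tilde G}(\Omega)$ then $\J(u) = +\infty$ and the constant sequence $u_j = u$ works trivially (the $L^G$-distance is $0$). If $u \in W_0^{1,\tilde G}(\Omega)$, the natural candidate is simply $u_j = u$ again: by Theorem \ref{main1}, since $\tilde G$ is equivalent to $G$ and the limit \eqref{phitilde} exists, $\lim_{s\uparrow 1} (1-s)\Phi_{s,G}(u) = \Phi_{\tilde G}(\nabla u) = \J(u)$, so $\lim_j \J_{s_j}(u) = \J(u)$ provided $u \in W_0^{s_j,G}(\Omega)$ for $j$ large. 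Here one must check that $u \in W^{1,\tilde G}(\R^n)$ with $u = 0$ a.e. outside $\Omega$ indeed belongs to $W^{s,G}(\R^n)$ for $s$ close to $1$: this follows from Lemma \ref{teo1} (whose bound $\Phi_{s,G}(u) \le \frac{n\omega_n}{1-s}\Phi_G(|\nabla u|) + \frac{2\C n\omega_n}{s}\Phi_G(u)$ is finite since $G \sim \tilde G$), and $u$ keeps vanishing outside $\Omega$, so $u \in W_0^{s,G}(\Omega)$. Thus the constant sequence is a recovery sequence and the $\limsup$ inequality holds with equality in the limit.

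The main obstacle is the \textbf{$\liminf$ inequality}, and specifically making sure that Theorem \ref{main2} can be invoked: one needs the extracted subsequence to have both $\sup (1-s_j)\Phi_{s_j,G}(u_j) < \infty$ (automatic on the subsequence realizing a finite liminf) and $\sup \Phi_G(u_j) < \infty$ (which needs the convergence $u_j \to u$ in $L^G$ to be upgraded to boundedness of the modulars $\Phi_G(u_j)$ — immediate from norm convergence together with the $\Delta_2$ condition, but worth a line), and then to reconcile the $L^G_{\text{loc}}$-limit produced by Theorem \ref{main2} with the given $L^G$-limit $u$ and track the vanishing outside $\Omega$. Everything else is a direct citation of the earlier results, so the proof is short once these compatibility points are dispatched.
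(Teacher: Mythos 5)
Your proposal is correct and follows exactly the route the paper takes (which it states in two lines): the recovery sequence is the constant sequence, justified by Theorem \ref{main1} together with Lemma \ref{teo1}, and the liminf inequality is the compactness/lower-semicontinuity statement of Theorem \ref{main2}. The only point you pass over at the same level of informality as the paper is the identification of the limit as an element of $W_0^{1,\tilde G}(\Omega)$ (defined as a closure of $C^\infty_c(\Omega)$) from the mere fact that it vanishes a.e.\ outside $\Omega$, which strictly speaking uses some regularity of $\partial\Omega$.
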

The proof of Theorem \ref{teo.gamma.conv} is a direct consequence of our previous results. Indeed, the limsup inequality follows just by choosing the constant sequence, whilst the liminf inequality follows from Theorem \ref{main2}.

The main feature of the $\Gamma-$convergence is that it implies the convergence of minima. 
\begin{thm} \label{teo.Gamma}
Let $(X,d)$ be a metric space and let $F,F_j\colon X  \to \bar \R$, $j\in\N$, be such that $F_j$  $\Gamma-$converges to $F$. Assume that for each $j\in\N$ there exist $u_j\in X$ such that $F_j(u_j)=\inf_{X} F_j$ and 	suppose that the sequence $\{u_j\}_{j\in\N}\subset X$ is precompact.

Then every accumulation point of $\{u_j\}_{j\in\N}$ is a minimum of $F$ and
$$
	\inf_{X} F = \lim_{j\to\infty} \inf_{X} F_j.
$$
\end{thm}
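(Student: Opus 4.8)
The plan is to prove the classical fact that $\Gamma$-convergence plus equicoercivity (here furnished by precompactness of the minimizers) yields convergence of infima and that accumulation points of minimizers are minimizers of the limit functional. This is a standard result (see Dal Maso or Braides), but I would include a short self-contained proof tailored to our setting.

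First I would extract a convergent subsequence: since $\{u_j\}_{j\in\N}$ is precompact, there is a subsequence $\{u_{j_k}\}_{k\in\N}$ and a point $u\in X$ with $u_{j_k}\to u$ in $(X,d)$. The goal is to show $F(u)=\inf_X F=\lim_{j\to\infty}\inf_X F_j$, which will in particular show the whole sequence of infima converges (since every subsequence of $\{\inf_X F_j\}$ has a further subsequence along which it converges to $\inf_X F$, by re-running the argument) and that every accumulation point is a minimizer.

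The two inequalities:
\begin{itemize}
\item[(i)] (Lower bound / ``$\liminf$'') Applying the $\liminf$ inequality of $\Gamma$-convergence to the sequence $u_{j_k}\to u$ gives
$$
F(u)\le \liminf_{k\to\infty} F_{j_k}(u_{j_k}) = \liminf_{k\to\infty}\inf_X F_{j_k},
$$
using $F_{j_k}(u_{j_k})=\inf_X F_{j_k}$. In particular $F(u)\le \liminf_{j\to\infty}\inf_X F_j$.
\item[(ii)] (Upper bound / ``$\limsup$'') Fix any $v\in X$. By the $\limsup$ (recovery sequence) inequality there is a sequence $v_j\to v$ with $F(v)\ge \limsup_{j\to\infty} F_j(v_j)\ge \limsup_{j\to\infty}\inf_X F_j$, where the last step uses $F_j(v_j)\ge \inf_X F_j$. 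Taking the infimum over $v\in X$ yields $\inf_X F\ge \limsup_{j\to\infty}\inf_X F_j$.
\end{itemize}
Combining (i) and (ii) with the trivial bound $\inf_X F\le F(u)$ gives the chain
$$
\inf_X F\le F(u)\le \liminf_{j\to\infty}\inf_X F_j\le \limsup_{j\to\infty}\inf_X F_j\le \inf_X F,
$$
so all quantities are equal: $\lim_{j\to\infty}\inf_X F_j=\inf_X F=F(u)$, i.e. $u$ is a minimizer of $F$. Finally, since this argument applies to \emph{any} convergent subsequence of $\{u_j\}$ (with the same limiting value $\inf_X F$ of the infima), every accumulation point of $\{u_j\}_{j\in\N}$ is a minimizer of $F$, and the convergence $\inf_X F_j\to \inf_X F$ holds along the full sequence because the limit of every convergent subsequence of the real sequence $\{\inf_X F_j\}$ equals $\inf_X F$.

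I do not anticipate a genuine obstacle here: the only subtlety worth stating carefully is that the recovery-sequence inequality is used with an \emph{arbitrary} target $v\in X$ (not just at $u$), which is what delivers the upper bound on $\limsup\inf_X F_j$; without that one only gets $F(u)\le \liminf$, not the full convergence. The precompactness hypothesis is exactly what is needed to produce the limit point $u$ feeding into the lower bound.
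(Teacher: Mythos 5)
The paper does not actually supply a proof of Theorem \ref{teo.Gamma}: it only remarks that the proof is elementary and refers to \cite{DalMaso}, so there is no argument of the authors' to compare against. Your proposal is the standard proof of the fundamental theorem of $\Gamma$-convergence and is correct in substance: the recovery-sequence inequality applied to an arbitrary $v\in X$ yields $\limsup_j\inf_X F_j\le\inf_X F$, the liminf inequality along a convergent subsequence of minimizers yields the matching lower bound at the limit point, and the subsequence principle upgrades the convergence of infima to the full sequence. Two small points deserve tightening. First, from $F(u)\le\liminf_{k}\inf_X F_{j_k}$ you cannot deduce ``in particular $F(u)\le\liminf_{j}\inf_X F_j$'', since the liminf along a subsequence is in general \emph{larger} than the liminf of the full sequence; this slip is harmless because the closing chain of inequalities works equally well along the subsequence, using $\limsup_k\inf_X F_{j_k}\le\limsup_j\inf_X F_j\le\inf_X F$, and the convergence of the full sequence of infima then follows from the subsequence argument you already give. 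Second, the liminf inequality in the paper's definition is stated for the full sequence $\{F_j\}$, so applying it to $u_{j_k}\to u$ requires the standard observation that subsequences of a $\Gamma$-convergent sequence $\Gamma$-converge to the same limit; for instance, set $v_{j}=u$ for $j\notin\{j_k\}$ and $v_{j_k}=u_{j_k}$, note $v_j\to u$, and use $F(u)\le\liminf_j F_j(v_j)\le\liminf_k F_{j_k}(u_{j_k})$. Neither point affects the validity of the proof.
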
 

The proof of Theorem \ref{teo.Gamma} is elementary. For a comprehensive study of Gamma convergence ant its properties, see \cite{DalMaso}.

Consider now $f\in L^{G^*}(\Omega)$ and define the functionals $\F, \F_s$ as 
\begin{equation}\label{FjF}
\F_s(u) := \J_s(u) - \int_\Omega f u\, dx\quad \text{y}\quad \F(u) := \J(u) - \int_\Omega f u\, dx.
\end{equation}

Since $u\mapsto \int_\Omega f u\,dx$ is continuous in $L^G(\Omega)$, Theorem \ref{teo.gamma.conv} implies that $\F_s\stackrel{\Gamma}{\to} \F$. See \cite[Proposition 6.21]{DalMaso}.

Let us apply Theorem \ref{teo.Gamma} to the familty $\F_s$. With this aim, let us verify that, given $0<s_j\uparrow 1$, there exists a sequence $\{u_j\}_{j\in\N}\in L^G(\Omega)$ of minimizers of $\F_{s_j}$ which is precompact in $L^G(\Omega)$.

The proof of the next lemma is standard. We state it for future references and leave the proof to the reader.
\begin{lema}\label{lema1}
Let $0<s<1$, $G$ be a uniformly convex Orlicz function and $f\in L^{G^*}(\Omega)$. Then there exists a unique function $u\in W^{s,G}_0(\Omega)$ such that
$$
\F_s(u)= \inf_{v\in W^{s,G}_0(\Omega)} \F_s(v).
$$
\end{lema}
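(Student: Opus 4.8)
The plan is to prove Lemma \ref{lema1} by the direct method of calculus of variations, using the standard structure of such minimization problems in reflexive Banach spaces. First I would establish that the functional $\F_s$ is well defined and finite on a nonempty subset of $W^{s,G}_0(\Omega)$ (for instance $0\in W^{s,G}_0(\Omega)$ gives $\F_s(0)=0$), and then show it is coercive: using the Poincar\'e inequality of Theorem \ref{teo.Poincare} (or the Corollary when $\Omega$ is bounded, or more generally an analogous bound), together with Young's inequality \eqref{young} to absorb the linear term $\int_\Omega fu\,dx$, one obtains that $\F_s(u)\to +\infty$ as $[u]_{s,G}\to\infty$, so any minimizing sequence $\{u_j\}$ is bounded in $W^{s,G}_0(\Omega)$.

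Next I would extract a weakly convergent subsequence. Since $W^{s,G}_0(\Omega)$ is a closed subspace of the reflexive Banach space $W^{s,G}(\R^n)$ (Proposition \ref{prop.WsG}), it is reflexive, so up to a subsequence $u_j\rightharpoonup u$ weakly in $W^{s,G}(\R^n)$ with $u=0$ a.e.\ in $\R^n\setminus\Omega$, hence $u\in W^{s,G}_0(\Omega)$. The key step is lower semicontinuity: $\Phi_{s,G}$ is convex (because $G$ is convex and $u\mapsto (u(x)-u(y))/|x-y|^s$ is linear) and strongly lower semicontinuous (via Fatou, as used repeatedly in the excerpt, e.g.\ in the proof of Theorem \ref{teo.comp}), hence weakly lower semicontinuous; and $u\mapsto\int_\Omega fu\,dx$ is weakly continuous since $f\in L^{G^*}(\Omega)=(L^G(\Omega))^*$. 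Therefore $\F_s(u)\le\liminf_j\F_s(u_j)=\inf\F_s$, so $u$ is a minimizer.

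Finally, uniqueness follows from strict convexity: since $G$ is assumed uniformly convex, $\Phi_{s,G}$ is strictly convex on $W^{s,G}_0(\Omega)$ modulo the issue that the seminorm could in principle degenerate, but in fact if $\Phi_{s,G}((u-v)/2)=0$ then $u=v$ a.e.\ up to a constant, and the constant must vanish because both functions are in $L^G(\Omega)$ extended by zero outside a set of finite measure (or because of the Poincar\'e inequality). Then if $u_1,u_2$ are both minimizers, evaluating $\F_s$ at $(u_1+u_2)/2$ and using strict convexity of the leading term together with linearity of the forcing term forces $u_1=u_2$. I expect the main (and only mildly delicate) obstacle to be making the uniqueness argument fully rigorous: one must be careful that strict convexity of $G$ transfers to strict convexity of the modular $\Phi_{s,G}$ as a functional on the function space, which requires observing that equality in the convexity inequality on a positive-measure set forces the difference quotient to vanish a.e., and then invoking the zero boundary condition to conclude $u_1=u_2$ rather than merely $u_1-u_2=\text{const}$.
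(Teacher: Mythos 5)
The paper deliberately omits this proof, stating only that it is ``standard'' and leaving it to the reader; your direct-method argument (coercivity via Poincar\'e and Young, reflexivity and weak closedness of $W^{s,G}_0(\Omega)$, weak lower semicontinuity of the convex modular, and uniqueness via strict convexity plus the vanishing of $u_1-u_2$ outside $\Omega$) is exactly the intended standard proof and is correct. You also rightly isolate the one mildly delicate point, namely that strict (here uniform) convexity of $G$ must be transferred to the modular to force $u_1(x)-u_1(y)=u_2(x)-u_2(y)$ a.e.\ before concluding from the exterior condition that the constant difference is zero.
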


Now, a simple consequence of Theorem \ref{main2} gives the compactness of the sequence of minima. Again, the details of the proof are left to the readers.
\begin{lema}\label{lema2}
Let $0<s_j\uparrow 1$, and $\Omega\subset \R^n$ be an open bounded subset. Given $j\in\N$, let $u_j\in L^G(\Omega)$ be the minimum of $\F_{s_j}$. Then  $\{u_j\}_{j\in\N}\subset L^G(\Omega)$ is precompact.
\end{lema}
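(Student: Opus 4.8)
The plan is to establish the claimed precompactness of $\{u_j\}_{j\in\N}$ directly from the uniform bounds furnished by the variational structure together with Theorem \ref{main2}. First I would record the energy comparison: since each $u_j$ minimizes $\F_{s_j}$ over $W^{s_j,G}_0(\Omega)$, testing against $v=0$ gives $\F_{s_j}(u_j)\le \F_{s_j}(0)=0$, hence
$$
(1-s_j)\Phi_{s_j,G}(u_j)\le \int_\Omega f u_j\, dx.
$$
The right-hand side is controlled by Young's inequality \eqref{young} together with Poincaré's inequality: using the Corollary to Theorem \ref{teo.Poincare} on the bounded domain $\Omega$ (with a constant uniform in $s$), one bounds $\int_\Omega f u_j$ by $\Phi_{G^*}(\lambda f) + \tfrac1\lambda$-type terms plus a small multiple of $(1-s_j)\Phi_{s_j,G}(u_j)$, which can be absorbed into the left-hand side. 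This yields
$$
\sup_{j\in\N}\, (1-s_j)\Phi_{s_j,G}(u_j) <\infty,
$$
and then Poincaré once more gives $\sup_j \Phi_G(u_j)<\infty$, so both hypotheses of Theorem \ref{main2} are met.

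Having secured the two uniform bounds, I would simply invoke Theorem \ref{main2}: it produces $u\in L^G(\R^n)$ (in fact $u\in W^{1,\tilde G}(\R^n)$, and since each $u_j$ vanishes outside $\Omega$, $u\in W^{1,\tilde G}_0(\Omega)$) and a subsequence $u_{j_k}\to u$ in $L^G_{\mathrm{loc}}(\R^n)$. Because $\Omega$ is bounded and all the $u_j$ are supported in $\overline\Omega$, convergence in $L^G_{\mathrm{loc}}$ is the same as convergence in $L^G(\R^n)=L^G(\Omega)$, so $\{u_j\}_{j\in\N}$ is precompact in $L^G(\Omega)$, as claimed.

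The only genuinely delicate point is making the absorption argument clean, i.e. ensuring the Poincaré constant used to control $\int_\Omega f u_j$ does not depend on $s_j$; this is exactly what the Corollary after Theorem \ref{teo.Poincare} provides (a constant depending only on $n$, $G$, $\Omega$), so there is no real obstacle. Everything else — the energy comparison, the application of Young's inequality, and the passage from $L^G_{\mathrm{loc}}$ to $L^G(\Omega)$ using boundedness of $\Omega$ — is routine, which is why the statement of Lemma \ref{lema2} is left to the reader.
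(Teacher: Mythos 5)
Your proof is correct and follows exactly the route the paper intends (the paper only remarks that the lemma is ``a simple consequence of Theorem \ref{main2}'' and leaves the details to the reader): energy comparison with $v=0$, absorption via Young's inequality and the $s$-uniform Poincar\'e constant of the Corollary to Theorem \ref{teo.Poincare}, then Theorem \ref{main2} plus the boundedness of $\Omega$ to upgrade $L^G_{\mathrm{loc}}$ to $L^G(\Omega)$ convergence. The only point worth a word is that the absorption step needs $\Phi_{G^*}(\lambda f)<\infty$ for the chosen $\lambda$, i.e.\ $G^*\in\Delta_2$, which follows from Lemma \ref{lema.iterado} and is implicitly assumed throughout the paper (it underlies the reflexivity in Proposition \ref{propiedades}).
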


As a corollary of Lemmas \ref{lema1} and \ref{lema2} and Theorem \ref{teo.Gamma} we obtain the following result.
\begin{thm}\label{teo.conv.minimos}
Let $G$ be a uniformly convex Orlicz function, $\Omega\subset\R^n$ be open and bounded and let $u_s\in  L^G(\Omega)$ be the minimum of $\F_s$. Then there exists $u\in L^G(\Omega)$ such that 
$$
u=\lim_{s\uparrow 1} u_s \text{ in } L^G(\Omega) \qquad \text{ and } \qquad \F(u)=\min_{v\in L^G(\Omega)} \F(v).
$$
\end{thm}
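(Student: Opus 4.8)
The plan is to obtain Theorem \ref{teo.conv.minimos} as a straightforward synthesis of the $\Gamma$-convergence result (Theorem \ref{teo.gamma.conv}), the abstract convergence-of-minima principle (Theorem \ref{teo.Gamma}), and the existence/compactness Lemmas \ref{lema1} and \ref{lema2}. First I would record the setup: $X = L^G(\Omega)$ is a metric space with the metric induced by the Luxemburg norm, and since $\Omega$ is bounded the functionals $\F_s$ and $\F$ defined in \eqref{FjF} are well defined on all of $X$ (taking the value $+\infty$ off $W^{s,G}_0(\Omega)$, resp. $W^{1,\tilde G}_0(\Omega)$). Because $u\mapsto \int_\Omega fu\,dx$ is continuous on $L^G(\Omega)$ for $f\in L^{G^*}(\Omega)$, the stability of $\Gamma$-convergence under continuous perturbations (\cite[Proposition 6.21]{DalMaso}) together with Theorem \ref{teo.gamma.conv} gives $\F_s \stackrel{\Gamma}{\to} \F$ as $s\uparrow 1$.

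Next I would verify the hypotheses of Theorem \ref{teo.Gamma} along an arbitrary sequence $s_j\uparrow 1$. Existence of a (unique) minimizer $u_{s_j} = u_j\in W^{s_j,G}_0(\Omega)$ of $\F_{s_j}$ is exactly Lemma \ref{lema1}, using the uniform convexity of $G$. Precompactness of the sequence $\{u_j\}_{j\in\N}$ in $L^G(\Omega)$ is exactly Lemma \ref{lema2}. Applying Theorem \ref{teo.Gamma} then yields: every accumulation point of $\{u_j\}_{j\in\N}$ minimizes $\F$ over $L^G(\Omega)$, and $\inf_X \F_{s_j}\to \inf_X \F$. In particular $\min_{v\in L^G(\Omega)}\F(v)$ is attained, and by Lemma \ref{lema1} applied with $G$ uniformly convex together with Poincar\'e's inequality the limiting functional $\F$ is strictly convex on $W^{1,\tilde G}_0(\Omega)$, so its minimizer $u$ is unique. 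Uniqueness of the accumulation point upgrades the precompactness of $\{u_j\}$ to full convergence $u_j\to u$ in $L^G(\Omega)$; since the initial sequence $s_j\uparrow 1$ was arbitrary, this gives $u_s\to u$ in $L^G(\Omega)$ as $s\uparrow 1$ in the sense of nets/continuous parameter, which is the assertion.

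The one point that needs a little care — and which I expect to be the main (mild) obstacle — is the strict convexity/uniqueness of the minimizer of the limit functional $\F$, since $\tilde G$ rather than $G$ governs the gradient term; here I would invoke Proposition \ref{prop.equiv}, which guarantees $\tilde G$ is an Orlicz function equivalent to $G$, so that $W^{1,\tilde G}_0(\Omega) = W^{1,G}_0(\Omega)$ with equivalent norms and $\Phi_{\tilde G}(|\nabla\cdot|)$ is still strictly convex (one may verify from \eqref{phitilde} that $\tilde G$ inherits strict convexity from the strict convexity of $G$, or simply argue coercivity plus convexity plus the fact that the perturbation is linear to get uniqueness via a standard parallelogram-type argument on the uniformly convex space $L^{\tilde G}$). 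Everything else is bookkeeping: checking that $\F$ and $\F_s$ are proper (the constant sequence shows $\F_s(0) = 0 < \infty$), that the metric-space framework of Theorem \ref{teo.Gamma} applies, and that the $\liminf$ inequality in the definition of $\Gamma$-convergence for $\F_s$ is precisely the content of Theorem \ref{main2} (lower semicontinuity of the gradient modular along $s_k\uparrow 1$), while the $\limsup$ inequality is witnessed by the constant recovery sequence via Theorem \ref{main1}.
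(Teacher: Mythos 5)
Your proposal is correct and follows essentially the same route as the paper, which states this theorem as a direct corollary of Theorem \ref{teo.gamma.conv} (plus the stability of $\Gamma$-convergence under the continuous linear perturbation), Lemmas \ref{lema1} and \ref{lema2}, and the abstract Theorem \ref{teo.Gamma}. You are in fact more explicit than the paper on the one point it leaves implicit, namely that uniqueness of the minimizer of the limit functional $\F$ is what upgrades subsequential convergence of $\{u_{s_j}\}$ to convergence of the whole family $u_s$ as $s\uparrow 1$.
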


\subsection{The fractional $g-$laplacian operator}

Let $G$ be an Orlicz function and $0<s<1$ be a fractional parameter. We define the fractional $g-$laplacian operator as 
\begin{align} \label{vp}
\begin{split}
(-\Delta_g)^s u&:=\text{p.v.} \intr  G'\left(\frac{|u(x)-u(y)|}{|x-y|^s}\right) \frac{u(x)-u(y)}{|u(x)-u(y)|}\frac{dy}{|x-y|^{n+s}} \\
&=\text{p.v.} \intr  g\left(\frac{|u(x)-u(y)|}{|x-y|^s}\right)\frac{u(x)-u(y)}{|u(x)-u(y)|}\frac{dy}{|x-y|^{n+s}}
\end{split}
\end{align}
where p.v. stands for {\em in principal value} and $g=G'$ which is well defined in view of \eqref{H3}.

Let us see that this operator is well defined between $W^{s,G}(\R^n)$ and its dual space $W^{-s,G^*}(\R^n)$.

For that purpose, let us define the approximating operators as
$$
(-\Delta_g)^s_\ve u(x) := \int_{|x-y|\ge \ve}  g\left(\frac{|u(x)-u(y)|}{|x-y|^s}\right)\frac{u(x)-u(y)}{|u(x)-u(y)|}\frac{dy}{|x-y|^{n+s}}.
$$
We have the following lemma.
\begin{lema}\label{Delta.ve}
Under the above notations and assumptions, there holds that if $u\in W^{s,G}(\R^n)$, then $(-\Delta_g)^s_\ve u\in L^{G^*}(\R^n)$.
\end{lema}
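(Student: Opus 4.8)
The plan is to estimate the Orlicz modular $\Phi_{G^*}\big((-\Delta_g)^s_\ve u\big)$ directly, showing it is finite (indeed controlled by $\Phi_{s,G}(u)$ and $\Phi_G(u)$, plus constants depending on $\ve$), and then invoke the characterization of $L^{G^*}(\R^n)$ as the set of measurable functions with finite $G^*$-modular. First I would fix $u\in W^{s,G}(\R^n)$ and write $v_\ve(x) := (-\Delta_g)^s_\ve u(x)$, noting that the integrand defining $v_\ve$ is absolutely convergent since $|x-y|\ge\ve$ removes the singularity at the diagonal. The central pointwise bound is obtained via Jensen's inequality: split the measure $d\nu_x(y) = |x-y|^{-(n+s)}\chi_{\{|x-y|\ge\ve\}}\,dy$ off as $\alpha_\ve\,d\tilde\nu_x$ where $\alpha_\ve := \int_{|x-y|\ge\ve}|x-y|^{-(n+s)}\,dy = n\omega_n s^{-1}\ve^{-s}$ is finite and $\tilde\nu_x$ is a probability measure. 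Since $|g(t)| = g(t)$ and the vector factor has modulus $\le 1$, we get $|v_\ve(x)| \le \alpha_\ve \int g\big(|u(x)-u(y)|/|x-y|^s\big)\,d\tilde\nu_x(y)$.

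Next I would apply $G^*$ to both sides. Because $G^*$ is itself convex (it is a sup of affine functions), Jensen gives
$$
G^*\big(|v_\ve(x)|\big) \le G^*\!\Big(\alpha_\ve \int g\big(\tfrac{|u(x)-u(y)|}{|x-y|^s}\big)\,d\tilde\nu_x(y)\Big) \le \int G^*\!\Big(\alpha_\ve\, g\big(\tfrac{|u(x)-u(y)|}{|x-y|^s}\big)\Big)\,d\tilde\nu_x(y),
$$
provided we can absorb the factor $\alpha_\ve$. If $\alpha_\ve\ge 1$, one uses that $G^*$ also satisfies a $\Delta_2$-type condition (equivalently, there is $q>1$ with $G^*(\lambda a)\le \lambda^{q}G^*(a)$ for $\lambda\ge 1$, which follows for $G^*$ exactly as Lemma~\ref{lema.lieber} gives it for $G$, since $G^*$ is an Orlicz function whenever $G$ is and $G$ satisfies $\Delta_2$ together with its complementary condition); if $\alpha_\ve<1$ one uses \eqref{P5}. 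In either case $G^*(\alpha_\ve a)\le C_\ve\, G^*(a)$. Then, by Lemma~\ref{G*g}, $G^*\big(g(t)\big)\le (p-1)G(t)$, so $G^*\big(|v_\ve(x)|\big)\le C_\ve (p-1)\int G\big(|u(x)-u(y)|/|x-y|^s\big)\,d\tilde\nu_x(y)$.

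Finally I would integrate in $x$ over $\R^n$ and unfold the probability measure back into $d\nu_x$:
$$
\Phi_{G^*}(v_\ve) \le C_\ve(p-1)\alpha_\ve^{-1}\!\intr\!\int_{|x-y|\ge\ve} G\!\Big(\tfrac{|u(x)-u(y)|}{|x-y|^s}\Big)\frac{dy}{|x-y|^{n+s}}\,dx.
$$
On the region $|x-y|\ge\ve$ one has two complementary subregions: for $\ve\le|x-y|<1$ (only relevant if $\ve<1$) the weight $|x-y|^{-(n+s)}\le \ve^{-s}|x-y|^{-n}$, so this part is bounded by $\ve^{-s}\Phi_{s,G}(u)$; for $|x-y|\ge 1$ one argues exactly as in the bound for $I_2$ in the proof of Lemma~\ref{teo1}, using \eqref{P5} and \eqref{P4}, to get a bound by $C\,s^{-1}\Phi_G(u)$. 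Altogether $\Phi_{G^*}(v_\ve)<\infty$, hence $v_\ve\in L^{G^*}(\R^n)$. The main obstacle is the bookkeeping with the constant $\alpha_\ve$: one must be careful that the final bound, while $\ve$-dependent, is genuinely finite, and that the complementary-function inequalities ($\Delta_2$ for $G^*$, and Lemma~\ref{G*g}) are applied with the correct exponents; none of this is deep, but it is where an incorrect constant would slip in.
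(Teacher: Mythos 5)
Your proposal is correct and follows essentially the same route as the paper's proof: Jensen's inequality against the normalized measure of total mass $\alpha_\ve$, absorption of $\alpha_\ve$ into a constant $C_\ve$ via a $\Delta_2$-type bound on $G^*$, the inequality $G^*(g(t))\le (p-1)G(t)$ of Lemma \ref{G*g}, and finally the weight estimate. The only (cosmetic) difference is at the end: the paper bounds the whole region $\{|x-y|\ge\ve\}$ in one stroke using $|x-y|^{-s}\le\ve^{-s}$, obtaining $\Phi_{G^*}((-\Delta_g)^s_\ve u)\le C_\ve(p-1)\ve^{-s}\Phi_{s,G}(u)$, whereas you split off $\{|x-y|\ge 1\}$ and treat it with \eqref{P4}--\eqref{P5}; both are valid.
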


\begin{proof}
First, observe that
\begin{align*}
G^*(|(-\Delta_g)^s_\ve u(x)|)&\le G^*\left(\int_{|h|\ge\ve}  g\left(\frac{|u(x+h)-u(x)|}{|h|^s}\right)\frac{dh}{|h|^{n+s}}\right)\\
&\le C_\ve \int_{|h|\ge\ve}  G^*\left(g\left(\frac{|u(x+h)-u(x)|}{|h|^s}\right)\right)\frac{dh}{|h|^{n+s}}\\
&\le \frac{C_\ve (p-1)}{\ve^s} \int_{|h|\ge\ve}  G\left(\frac{|u(x+h)-u(x)|}{|h|^s}\right)\frac{dh}{|h|^n}
\end{align*}
where we have used Jensen's inequality, the $\Delta_2$ condition \eqref{H2} and Lemma \ref{G*g}.

Integrating over $\R^n$ we obtain
$$
\Phi_{G^*}((-\Delta_g)^s_\ve u)\le \frac{C_\ve (p-1)}{\ve^s} \Phi_{s,G}(u).
$$
This completes the proof.
\end{proof}

\begin{rem}
Although it will not be used here, is not difficult to see that the constant $C_\ve$ in the former inequality is bounded independently of $\ve$. So we get the estimate
$$
\Phi_{G^*}((-\Delta_g)^s_\ve u)\le \frac{C}{\ve^s} \Phi_{s,G}(u).
$$
\end{rem}

It remains to see that $\lim_{\ve\downarrow 0} (-\Delta_g)^s_\ve u$ exists in $W^{-s,G^*}(\R^n)$.
\begin{thm}\label{Delta.defi}
Given $u\in W^{s,G}(\R^n)$, the limit $(-\Delta_g)^s u := \lim_{\ve\downarrow 0} (-\Delta_g)^s_\ve u$ exists in $W^{-s,G^*}(\R^n)$. Moreover the following representation formula holds
$$
\langle (-\Delta_g)^s u,v \rangle = \frac12 \iint_{\R^n\times\R^n} g\left(\frac{|u(x)-u(y)|}{|x-y|^s}\right) \frac{u(x)-u(y)}{|u(x)-u(y)|} \frac{v(x)-v(y)}{|x-y|^s}\frac{dx\,dy }{|x-y|^{n}},
$$
for any $v\in W^{s,G}(\R^n)$.
\end{thm}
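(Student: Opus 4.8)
The plan is to take as the candidate for $(-\Delta_g)^s u$ the functional $T$ defined by the right hand side of the claimed representation formula, and to prove both that $Tu\in W^{-s,G^*}(\R^n)$ and that $(-\Delta_g)^s_\ve u\to Tu$ in that space. Throughout write $A_u=\tfrac{|u(x)-u(y)|}{|x-y|^s}$, $A_v=\tfrac{|v(x)-v(y)|}{|x-y|^s}$ and $d\mu=|x-y|^{-n}\,dx\,dy$. That $v\mapsto\langle Tu,v\rangle$ is linear is immediate since $v\mapsto\tfrac{v(x)-v(y)}{|x-y|^s}$ is linear. For boundedness, the integrand of $\langle Tu,v\rangle$ is dominated in absolute value by $\tfrac12 g(A_u)A_v$, and Young's inequality \eqref{young} together with Lemma \ref{G*g} give
\[
g(A_u)A_v\le G^*\big(g(A_u)\big)+G(A_v)\le (p-1)G(A_u)+G(A_v),
\]
with $p>1$ as in \eqref{H4.1}. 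Integrating against $d\mu$ shows at once that $g(A_u)\in L^{G^*}(\R^n\times\R^n,d\mu)$, that $g(A_u)A_v\in L^1(d\mu)$, and that $|\langle Tu,v\rangle|\le\tfrac12\big((p-1)\Phi_{s,G}(u)+\Phi_{s,G}(v)\big)$. Applying the last bound to $v/[v]_{s,G}$, for which $\Phi_{s,G}\le 1$, and using \eqref{P5}, one gets $|\langle Tu,v\rangle|\le C(u)\,[v]_{s,G}\le C(u)\|v\|_{s,G}$, so $Tu\in W^{-s,G^*}(\R^n)$.

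Next I would rewrite the approximations in symmetric form. Fix $v\in W^{s,G}(\R^n)\subset L^G(\R^n)$. By Lemma \ref{Delta.ve} we have $(-\Delta_g)^s_\ve u\in L^{G^*}(\R^n)$, so $\langle(-\Delta_g)^s_\ve u,v\rangle=\intr (-\Delta_g)^s_\ve u\,v\,dx$. The proof of Lemma \ref{Delta.ve} actually bounds the $G^*$-modular of $x\mapsto\int_{|x-y|\ge\ve}g(A_u)|x-y|^{-n-s}\,dy$, which, by Hölder's inequality in $L^{G^*}$–$L^G$, makes the double integral $\iint_{|x-y|\ge\ve}g(A_u)\tfrac{|v(x)|}{|x-y|^{n+s}}\,dx\,dy$ finite. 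Since the kernel $g(A_u)\tfrac{u(x)-u(y)}{|u(x)-u(y)|}$ is antisymmetric in $(x,y)$, Fubini's theorem and the change of variables $x\leftrightarrow y$ yield, first for $v\in C_c^\infty(\R^n)$ and then for all $v\in W^{s,G}(\R^n)$ by density together with the continuity established above,
\[
\langle(-\Delta_g)^s_\ve u,v\rangle=\frac12\iint_{|x-y|\ge\ve}g(A_u)\frac{u(x)-u(y)}{|u(x)-u(y)|}\frac{v(x)-v(y)}{|x-y|^s}\,d\mu .
\]

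Finally, subtracting the two displays, $\langle(-\Delta_g)^s_\ve u-Tu,v\rangle$ equals minus the same integral over $\{|x-y|<\ve\}$, so $|\langle(-\Delta_g)^s_\ve u-Tu,v\rangle|\le\tfrac12\iint_{|x-y|<\ve}g(A_u)A_v\,d\mu$. Hölder's inequality in $L^{G^*}(d\mu)$–$L^G(d\mu)$ bounds this by $C\,\|g(A_u)\chi_{\{|x-y|<\ve\}}\|_{L^{G^*}(d\mu)}\,[v]_{s,G}$, and taking the supremum over $\|v\|_{s,G}\le 1$ gives $\|(-\Delta_g)^s_\ve u-Tu\|_{W^{-s,G^*}}\le C\,\|g(A_u)\chi_{\{|x-y|<\ve\}}\|_{L^{G^*}(d\mu)}$. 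Since $g(A_u)\in L^{G^*}(d\mu)$, dominated convergence gives $\iint_{|x-y|<\ve}G^*(g(A_u))\,d\mu\to 0$ as $\ve\downarrow 0$, and by absolute continuity of the Luxemburg norm the right hand side tends to $0$; hence $(-\Delta_g)^s_\ve u\to Tu$ in $W^{-s,G^*}(\R^n)$, which together with the display of the second paragraph is exactly the assertion. The step I expect to be the main obstacle is precisely this last passage from the vanishing of the modular to the vanishing of the Luxemburg norm of $g(A_u)\chi_{\{|x-y|<\ve\}}$ (the absolute continuity of the $L^{G^*}$-norm), together with the Fubini justification of the symmetrization; if one only needs the weaker — but for the applications sufficient — statement that $\langle(-\Delta_g)^s_\ve u,v\rangle\to\langle Tu,v\rangle$ for each fixed $v$, then this last step collapses to dominated convergence with the fixed majorant $g(A_u)A_v\in L^1(d\mu)$ furnished by the first paragraph.
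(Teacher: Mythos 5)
Your proposal is correct and follows essentially the same route as the paper: symmetrize the truncated pairing using the antisymmetry of the kernel, then dominate the integrand by $G^*\big(g(A_u)\big)+G(A_v)\le (p-1)G(A_u)+G(A_v)$ via Young's inequality \eqref{young} and Lemma \ref{G*g}, and pass to the limit. The only genuine addition is your attempt to upgrade to convergence in the norm of $W^{-s,G^*}(\R^n)$ --- the paper's own argument only yields $\langle(-\Delta_g)^s_\ve u,v\rangle\to\langle Tu,v\rangle$ for each fixed $v$ by dominated convergence --- and you correctly flag that this extra step hinges on the absolute continuity of the $L^{G^*}$-norm (a $\Delta_2$-type condition on $G^*$), which is not guaranteed by the stated hypotheses on $G$ alone.
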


\begin{proof}
In view of Lemma \ref{Delta.ve}, we have
\begin{align*}
\langle (-\Delta_g)^s_\ve u, v\rangle &= \intr \int_{|x-y|\ge\ve} g\left(\frac{|u(x)-u(y)|}{|x-y|^s}\right)\frac{u(x)-u(y)}{|u(x)-u(y)|} v(x) \frac{dy}{|x-y|^{n+s}}\, dx\\
&= \intr \int_{|x-y|\ge\ve} g\left(\frac{|u(x)-u(y)|}{|x-y|^s}\right)\frac{u(y)-u(x)}{|u(x)-u(y)|} v(y) \frac{dy}{|x-y|^{n+s}}\, dx.
\end{align*}
Therefore,
$$
\langle (-\Delta_g)^s_\ve u, v\rangle =\frac12 \iint_{|x-y|\ge\ve} g\left(\frac{|u(x)-u(y)|}{|x-y|^s}\right) \frac{u(x)-u(y)}{|u(x)-u(y)|} \frac{v(x)-v(y)}{|x-y|^s}\frac{dx\,dy }{|x-y|^{n}}.
$$
In order to pass to the limit we need to check that the integrand is in $L^1(\R^n\times\R^n)$. But, by \eqref{young} and Lemma \ref{G*g} it holds that
\begin{align*}
g\left(\frac{|u(x)-u(y)|}{|x-y|^s}\right) \frac{|v(x)-v(y)|}{|x-y|^s} &\leq 
G^* \left(  g\left(\frac{|u(x)-u(y)|}{|x-y|^s}\right) \right) + G\left( \frac{|v(x)-v(y)|}{|x-y|^s} \right)\\
&\leq (p-1) G\left(\frac{|u(x)-u(y)|}{|x-y|^s} \right) +  G\left( \frac{|v(x)-v(y)|}{|x-y|^s} \right)
\end{align*}
and hence we conclude
$$
\iint_{\R^n\times \R^n} g\left(\frac{|u(x)-u(y)|}{|x-y|^s}\right) \frac{|v(x)-v(y)|}{|x-y|^s}    \frac{dxdy}{|x-y|^n} \leq (p-1) \Phi_{s,G}(u) + \Phi_{s,G}(v).
$$
The result follows.
\end{proof}

Given a bounded open set $\Omega\subset \R^n$ and $f\in L^{G^*}(\Omega)$, using  Theorem \ref{Delta.defi} we can provide for a notion of weak solution for the following Dirichlet type equation
\begin{align} \label{ec.g}
\begin{cases}
(-\Delta_g)^s u= f &\quad \text{ in } \Omega\\
u=0 &\quad \text{ on } \R^n \setminus \Omega.
\end{cases}
\end{align}

\begin{defn}
We say that $u\in W^{s,G}_0(\Omega)$ is a weak solution of \eqref{ec.g} if
\begin{equation} \label{debil}
\langle (-\Delta_g)^s u, v\rangle = \int_\Omega uv \,dx
\end{equation}
for all $v\in W^{s,G}_0(\Omega)$.
\end{defn}

\begin{rem}
Observe that since $C^\infty_c(\Omega)\subset W^{s,G}_0(\Omega)$, it follows that a weak solution of \eqref{ec.g} is a solution in the sense of distributions.
\end{rem}

Now, as expected, we link every solution to \eqref{ec.g} with minimum points of the associated functional \eqref{FjF}.
\begin{thm}\label{equivalencia}
Let $\Omega\subset \R^n$ be open and bounded and let $f\in L^{G^*}(\Omega)$. Then $u\in W^{s,G}_0(\Omega)$ is a weak solution of  \eqref{ec.g} if and only if
$$
\F_s(u) = \inf_{v\in L^G(\Omega)} \F_s(v),
$$
where the functional $\F_s\colon L^G(\Omega)\to \bar\R$ is given by \eqref{FjF}.
\end{thm}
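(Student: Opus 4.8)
The plan is to exploit that $\F_s$ is convex on the vector space $W^{s,G}_0(\Omega)$ and that its first variation is controlled by $(-\Delta_g)^s$ via the representation formula of Theorem~\ref{Delta.defi}. Since $\F_s\equiv+\infty$ on $L^G(\Omega)\setminus W^{s,G}_0(\Omega)$, the problems $\inf_{L^G(\Omega)}\F_s$ and $\inf_{W^{s,G}_0(\Omega)}\F_s$ coincide, so one may argue entirely in $W^{s,G}_0(\Omega)$.

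First I would compute directional derivatives. Fix $u,v\in W^{s,G}_0(\Omega)$ and set $\phi(t):=\F_s(u+tv)$, which is finite for every $t\in\R$ because $W^{s,G}_0(\Omega)$ is linear and $\int_\Omega f(u+tv)\,dx$ is finite by Young's inequality \eqref{young}. Since $t\mapsto|u(x)+tv(x)-u(y)-tv(y)|$ is convex (the absolute value of an affine scalar function) and $G$ is increasing and convex, $t\mapsto\Phi_{s,G}(u+tv)$ is convex, hence so is $\phi$. The claim is that $\phi$ is differentiable at $0$ with
$$
\phi'(0)=(1-s)\iint_{\R^n\times\R^n} g\!\left(\frac{|u(x)-u(y)|}{|x-y|^s}\right)\frac{u(x)-u(y)}{|u(x)-u(y)|}\,\frac{v(x)-v(y)}{|x-y|^s}\,\frac{dx\,dy}{|x-y|^n}-\int_\Omega fv\,dx,
$$
so that, by Theorem~\ref{Delta.defi}, $\phi'(0)=2(1-s)\langle(-\Delta_g)^s u,v\rangle-\int_\Omega fv\,dx$. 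The substance is the differentiation under the integral sign. Writing $b_t:=|u(x)+tv(x)-u(y)-tv(y)|/|x-y|^s$, the map $t\mapsto G(b_t)$ is convex, so its difference quotients $t\mapsto t^{-1}[G(b_t)-G(b_0)]$ are nondecreasing in $t$; hence for $0<|t|\le1$ they are bounded in absolute value by $G(b_1)+G(b_{-1})+2G(b_0)$, and since $b_{\pm1}\le(|u(x)-u(y)|+|v(x)-v(y)|)/|x-y|^s$, Lemma~\ref{lema.triang} (with $\delta=1$) shows this majorant is integrable against $dx\,dy/|x-y|^n$, being bounded by a finite combination of $\Phi_{s,G}(u)$ and $\Phi_{s,G}(v)$. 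For the pointwise limit: on $\{u(x)\ne u(y)\}$ it is the chain rule, while on $\{u(x)=u(y)\}$ the quotient equals $t^{-1}G\!\left(|t|\,|v(x)-v(y)|/|x-y|^s\right)$, which tends to $0$ by superlinearity at zero \eqref{H3} (so the stated integrand, which vanishes there because $g(0)=0$, is recovered). Dominated convergence then yields the formula.

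With the derivative in hand the equivalence is the standard convex dichotomy. If $u$ minimizes $\F_s$, then $\phi$ has a global minimum at $t=0$, whence $\phi'(0)=0$ for every $v$. Conversely, if $\phi'(0)=0$ for every $v\in W^{s,G}_0(\Omega)$, then for arbitrary $w\in W^{s,G}_0(\Omega)$ convexity of $t\mapsto\F_s(u+t(w-u))$ gives $\F_s(w)\ge\F_s(u)+\frac{d}{dt}\F_s(u+t(w-u))\big|_{t=0}=\F_s(u)$, so $u$ minimizes. By the computation above, ``$\phi'(0)=0$ for all $v$'' is exactly the weak formulation \eqref{debil} of \eqref{ec.g}, up to the fixed normalizing constant relating $\F_s$ and $(-\Delta_g)^s$. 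This proves the theorem.

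The main obstacle is precisely this differentiation under the integral sign: producing the integrable majorant (which, as above, should reduce to Lemma~\ref{lema.triang}) and handling the degenerate set $\{u(x)=u(y)\}$ together with possible corners of $G$ — if $g$ fails to be continuous at values attained on a set of positive measure, one obtains a priori a two-sided variational inequality rather than an equation, so some continuity of $g$ (e.g. strict convexity of $G$) is tacitly used in matching \eqref{debil}. Everything else is routine convex analysis.
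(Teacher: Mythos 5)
Your proof is correct in substance, but it takes a genuinely different route from the paper in the direction ``weak solution $\Rightarrow$ minimizer''. The paper never differentiates the modular there: starting from the weak formulation tested against $u-v$, it bounds $\langle(-\Delta_g)^s u,v\rangle$ by Young's inequality \eqref{young} and then uses the equality case $G^*(g(t))=g(t)t-G(t)$ from \eqref{caso.igualdad} to absorb the $G^*$ term back into $\langle(-\Delta_g)^s u,u\rangle-\tfrac12\Phi_{s,G}(u)$, landing directly on $\F_s(u)\le\F_s(v)$. That argument is slicker and requires no regularity of $G$ beyond convexity. You instead establish G\^ateaux differentiability of $\phi(t)=\F_s(u+tv)$ once and for all and run the standard convex dichotomy (subgradient inequality one way, first-order condition the other). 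What your approach buys is precisely the justification of the step the paper dismisses as ``straightforward'' in its converse direction: your monotone-difference-quotient majorant, controlled via Lemma~\ref{lema.triang}, and your treatment of the diagonal set $\{u(x)=u(y)\}$ via \eqref{H3} are exactly what is needed to make $\phi'(0)=0\Rightarrow\eqref{debil}$ rigorous, and your caveat about corners of $G$ is legitimate (the paper tacitly assumes $g=G'$ exists everywhere, cf.\ \eqref{vp}). One point you wave at with ``up to the fixed normalizing constant'' deserves to be said plainly: since $\J_s=(1-s)\Phi_{s,G}$ while the representation formula of Theorem~\ref{Delta.defi} pairs $(-\Delta_g)^s$ with $\tfrac12\Phi_{s,G}$, the Euler--Lagrange equation of $\F_s$ is $2(1-s)(-\Delta_g)^s u=f$, not \eqref{ec.g}; when $f\ne 0$ this is not a harmless rescaling of the same minimization problem. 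This normalization mismatch is present in the paper's own proof as well (its final inequality really compares $\tfrac12\Phi_{s,G}-\int fv$, not $\F_s$), so it is a defect of the statement rather than of your argument, but a complete write-up should either drop the factor $(1-s)$ from \eqref{FjF} in this subsection or restate \eqref{ec.g} accordingly.
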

 
\begin{proof}
Let $u\in W^{s,G}_0(\Omega)$ be a weak solution of \eqref{ec.g} and $v\in W^{s,G}_0(\Omega)$ be an arbitrary function.  Then,
$$
\langle (-\Delta_g)^su,u-v\rangle = \int_\Omega f (u-v)\,dx.
$$
Rearranging the terms, applying \eqref{young} we get
\begin{align*}
\langle (-\Delta_g)^su,u\rangle -& \int_\Omega fu\,dx = \langle (-\Delta_g)^su,v\rangle - \int_\Omega fv\,dx\\
&\leq \frac12 \iint_{\R^n\times\R^n} G^*\left(g\left(\frac{|u(x)-u(y)|}{|x-y|^s}\right)\right)\frac{dxdy}{|x-y|^n} + \F_s(v).
\end{align*}
Finally, we observe that
$$
\langle (-\Delta_g)^su,u\rangle = \frac12 \iint_{\R^n\times\R^n} g\left(\frac{|u(x)-u(y)|}{|x-y|^s}\right)\frac{|u(x)-u(y)|}{|x-y|^s}\frac{dxdy}{|x-y|^n}.
$$
So, using \eqref{caso.igualdad} we arrive at $\F_s(u) \le \F_s(v)$.

Now, suppose that $u\in L^G(\Omega)$ is a minimum of $\F_s$. In particular,  $\F_s(u)<\infty$ and then $u\in W^{s,G}_0(\Omega)$.

Fixed $v\in W^{1,p}_0(\Omega)$ we define $\phi\colon \R\to\R$ as
$$
\phi(t) = \F_s(u+tv).
$$
Hence $\phi(0) = \inf_{t\in\R} \phi(t)$, from where $\phi'(0)=0$. It is straightforward to see that that condition implies that $u$ satisfies \eqref{debil}.
\end{proof}

Combining Theorem \ref{equivalencia} and Lemma \ref{lema1} we obtain the following result.

\begin{thm}
Let $0<s<1$, $G$ be a uniformly convex Orlicz function and $\Omega\subset \R^n$ be an open bounded domain. Then, given $f\in L^{G^*}(\Omega)$ there exists a unique weak solution to \eqref{ec.g}.
\end{thm}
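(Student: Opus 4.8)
The plan is to deduce the statement directly from the two preceding results, Lemma~\ref{lema1} and Theorem~\ref{equivalencia}, the only point requiring a remark being that the two domains of minimization appearing there ($W^{s,G}_0(\Omega)$ in Lemma~\ref{lema1}, $L^G(\Omega)$ in Theorem~\ref{equivalencia}) are compatible.

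First I would apply Lemma~\ref{lema1}: since $G$ is uniformly convex and $f\in L^{G^*}(\Omega)\subset W^{-s,G^*}(\Omega)$ (so that $v\mapsto\int_\Omega fv\,dx$ is continuous on $W^{s,G}_0(\Omega)$), the functional $\F_s$ has a unique minimizer $u\in W^{s,G}_0(\Omega)$ over $W^{s,G}_0(\Omega)$. Next I would observe that, by the very definition of $\J_s$ and hence of $\F_s=\J_s-\int_\Omega f\,\cdot\,dx$, we have $\F_s\equiv+\infty$ on $L^G(\Omega)\setminus W^{s,G}_0(\Omega)$; consequently $\inf_{v\in L^G(\Omega)}\F_s(v)=\inf_{v\in W^{s,G}_0(\Omega)}\F_s(v)=\F_s(u)$, and $u$ is the unique point of $L^G(\Omega)$ at which this infimum is attained.

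Finally, Theorem~\ref{equivalencia} states that $w\in W^{s,G}_0(\Omega)$ is a weak solution of \eqref{ec.g} if and only if $\F_s(w)=\inf_{v\in L^G(\Omega)}\F_s(v)$. Combining this with the previous step, $w$ is a weak solution precisely when $w=u$, which yields at once existence (the function $u$) and uniqueness. I do not expect a genuine obstacle here: the analytic substance has already been carried out in Lemma~\ref{lema1} (coercivity and weak lower semicontinuity of $\F_s$, via uniform convexity of $G$ together with the isometric embedding of Proposition~\ref{prop.WsG}) and in Theorem~\ref{equivalencia} (differentiating $\phi(t)=\F_s(u+tv)$ at $t=0$); the sole item needing a line of justification is the identification of the infimum over $L^G(\Omega)$ with the one over $W^{s,G}_0(\Omega)$, which is immediate from the definition of $\F_s$.
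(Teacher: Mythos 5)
Your argument is exactly the paper's: the theorem is stated there as an immediate combination of Lemma~\ref{lema1} (unique minimizer of $\F_s$ over $W^{s,G}_0(\Omega)$) and Theorem~\ref{equivalencia} (weak solutions are precisely the minimizers), and your extra remark that $\F_s\equiv+\infty$ off $W^{s,G}_0(\Omega)$, so the two infima coincide, is the correct and only gluing step needed. The proposal is correct and follows the same route as the paper.
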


Combining this theorem with our $\Gamma-$convergence result, Theorem \ref{teo.conv.minimos}, we arrive at the main result of this subsection.

\begin{thm}
Let $0<s<1$, $G$ be a uniformly convex Orlicz function and $\Omega\subset \R^n$ be an open bounded domain. Let $u_s\in W^{s,G}_0(\Omega)$ be the weak solution to \eqref{ec.g}. Then, there exists a function $u\in W^{1,\tilde G}_0(\Omega)$ such that $u_s\to u$ in $L^G(\Omega)$ and $u$ is the weak solution to
\begin{equation}\label{ec.local}
\begin{cases}
-\Delta_{\tilde g} u = f & \text{in }\Omega\\
u=0 & \text{on }\partial\Omega,
\end{cases}
\end{equation}
where $\Delta_{\tilde g} u = \diver\left(\tilde g(|\nabla u|)\frac{\nabla u}{|\nabla u|}\right)$ and $\tilde g = \tilde G'$.
\end{thm}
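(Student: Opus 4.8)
The plan is to read off the limit function from the $\Gamma$-convergence machinery already established and then to check that it satisfies the Euler--Lagrange equation of the limit energy. First I would use Theorem \ref{equivalencia}: for every $0<s<1$ the weak solution $u_s\in W^{s,G}_0(\Omega)$ of \eqref{ec.g} is precisely the minimizer of $\F_s$ over $L^G(\Omega)$, which is moreover unique by Lemma \ref{lema1} since $G$ is uniformly convex. As $\Omega$ is bounded, Theorem \ref{teo.conv.minimos} (built on Theorem \ref{main2}) then applies and produces $u\in L^G(\Omega)$ with $u_s\to u$ in $L^G(\Omega)$ as $s\uparrow 1$ and $\F(u)=\min_{v\in L^G(\Omega)}\F(v)$. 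Since $\F(u)\le\F(0)=0<\infty$ we must have $\J(u)<\infty$, and by the very definition of $\J$ this forces $u\in W^{1,\tilde G}_0(\Omega)$.

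Next I would show that $u$ is a weak solution of \eqref{ec.local}, i.e. that $\int_\Omega \tilde g(|\nabla u|)\tfrac{\nabla u}{|\nabla u|}\cdot\nabla v\,dx=\int_\Omega fv\,dx$ for every $v\in W^{1,\tilde G}_0(\Omega)$, by mimicking in the local setting the variational argument of the second half of the proof of Theorem \ref{equivalencia}. Fixing $v$ and setting $\phi(t):=\F(u+tv)$, minimality of $u$ gives $\phi'(0)=0$, and the only nontrivial point is to differentiate $t\mapsto\int_\Omega\tilde G(|\nabla u+t\nabla v|)\,dx$ under the integral sign. For $|t|\le 1$ the $t$-derivative of the integrand is, in absolute value, at most $\tilde g(|\nabla u|+|\nabla v|)\,|\nabla v|$ (because $\tilde g$ is nondecreasing), and the Young inequality \eqref{young} for $\tilde G$ together with Lemma \ref{G*g} and property \eqref{P4} bound this by $\tfrac{(p-1)\C}{2}\bigl(\tilde G(|\nabla u|)+\tilde G(|\nabla v|)\bigr)+\tilde G(|\nabla v|)$, which lies in $L^1(\Omega)$ since $u,v\in W^{1,\tilde G}_0(\Omega)$ (here $p>1$ is the exponent of \eqref{H4.1} and $\C$ the doubling constant, both for $\tilde G$). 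Dominated convergence then legitimizes the differentiation, and $\phi'(0)=0$ is exactly the sought weak formulation, the homogeneous boundary condition being encoded in $u\in W^{1,\tilde G}_0(\Omega)$.

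Finally, running the same computation in reverse shows that every weak solution of \eqref{ec.local} is a minimizer of $\F$ on $W^{1,\tilde G}_0(\Omega)$, and since that functional is strictly convex the solution is unique; hence the $u$ constructed above is \emph{the} weak solution and $u_s\to u$ is a genuine limit. I expect the main obstacle to be the domination estimate that makes the differentiation of the nonsmooth Orlicz integrand rigorous, together with the a.e.\ identification of $\tfrac{d}{dt}\tilde G(|\nabla u+t\nabla v|)$; a secondary point is verifying that $\tilde G$ is uniformly convex so that uniqueness for \eqref{ec.local} holds, which should follow from the uniform convexity of $G$. Everything else is a routine assembly of Theorems \ref{equivalencia}, \ref{teo.conv.minimos}, \ref{main2} and the definitions of $\F$, $\J$ and $\Delta_{\tilde g}$.
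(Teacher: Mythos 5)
Your proposal is correct and follows essentially the same route as the paper: identify $u_s$ as the minimizer of $\F_s$ via Theorem \ref{equivalencia}, pass to the limit with Theorem \ref{teo.conv.minimos}, and upgrade subsequential to full convergence by uniqueness for the limit problem. You simply spell out the details the paper leaves implicit (membership $u\in W^{1,\tilde G}_0(\Omega)$ from $\F(u)\le\F(0)$, the local Euler--Lagrange computation with its domination estimate, and the strict convexity giving uniqueness), which is a reasonable and faithful filling-in of the paper's one-line argument.
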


\begin{proof}
At this point the only thing that needs to be justified is that the whole sequence is convergent. But this fact follows from the uniqueness of solutions to \eqref{ec.local}.
\end{proof}

\section*{Acknowledgements}

This paper is partially supported by grants UBACyT 20020130100283BA, CONICET PIP 11220150100032CO and ANPCyT PICT 2012-0153. 

J. Fern\'andez Bonder and A.M. Salort are members of CONICET

\bibliographystyle{amsplain}
\bibliography{biblio}

\end{document}